\newcommand{\myemail}{\texttt{stefan.waldmann@mathematik.uni-wuerzburg.de}}
\newcommand{\myaddress}{Julius Maximilian University of Würzburg \\
     Department of Mathematics \\
     Chair of Mathematics X (Mathematical Physics) \\
     Emil-Fischer-Straße 31 \\
     97074 Würzburg \\
     Germany}
\newcommand{\AuthorOne}{Marvin Dippell}
\newcommand{\AuthorTwo}{Chiara Esposito}
\newcommand{\AuthorThree}{Stefan Waldmann}
\author{\AuthorOne\thanks{\AuthorEmailOne}\\[0.5cm]
\AuthorAddressOne \\[1cm]
    \addtocounter{footnote}{2}
    \AuthorTwo\thanks{\AuthorEmailTwo}\\[0.5cm]
    \AuthorAddressTwo \\[1cm]
    \AuthorThree\thanks{\AuthorEmailThree}\\[0.5cm]
    \AuthorAddressThree
}
\newcommand{\AuthorAddressOne}{\myaddress}
\newcommand{\AuthorAddressTwo}{Dipartimento di Matematica \\
     Universit\`a degli Studi di Salerno \\
     via Giovanni Paolo II, 132 \\
     84084 Fisciano (SA) \\
     Italy
}
\newcommand{\AuthorAddressThree}{\myaddress}
\newcommand{\AuthorEmailOne}{\texttt{marvin.dippell@mathematik.uni-wuerzburg.de}}
\newcommand{\AuthorEmailTwo}{\texttt{chesposito@unisa.it}}
\newcommand{\AuthorEmailThree}{\myemail}
\newcommand{\Naturals}{\mathbb{N}}
\newcommand{\Integers}{\mathbb{Z}}
\newcommand{\qalgebra}[1]    {{\pmb{\algebra{#1}}}}
\newcommand{\total}{{\scriptscriptstyle\ch@irxscriptfont{total}}}
\newcommand{\strict}{{\scriptscriptstyle\ch@irxscriptfont{strict}}}
\newcommand{\algbimodule}[3]{\deco{}{\algebra{#1}}{\module{#2}}{}{\algebra{#3}}}
\newcommand{\CoisoModTriple}{\categoryname{C_3Mod}}
\newcommand{\CoisoAlgTriple}{\categoryname{C_3Alg}}
\newcommand{\CoisoBimodTriple}{\categoryname{C_3Bimod}}
\newcommand{\CoisoHom}{\categoryname{C_3}\!\operatorname{Hom}}
\newcommand{\CoisoEnd}{\categoryname{C_3}\!\operatorname{End}}
\newcommand{\CoisoAut}{\categoryname{C_3}\!\operatorname{Aut}}
\newcommand{\CoisoDer}{\categoryname{C_3}\!\operatorname{Der}}
\newcommand{\CoisoInnDer}{\categoryname{C_3}\!\operatorname{InnDer}}
\newcommand{\CoChains}{\categoryname{Ch}}
\newcommand{\normalizer}{\operatorname{N}}
\newcommand{\deform}[1]{\boldsymbol{#1}}
\newcommand{\Coeq}{\operatorname{coeq}}
\newcommand{\regimage}{\operatorname{regim}}
\newcommand{\reg}{\ch@irxscriptfont{reg}}
\newcommand{\MCset}{\mathrm{MC}}
\newcommand{\CoisoSetTriple}{\categoryname{C_3Set}}
\newcommand{\CoisoGroupsTriple}{\categoryname{C_3Group}}
\newcommand{\DGLieAlg}{\categoryname{dgLieAlg}}
\newcommand{\CoisoDGLieAlgTriple}{\categoryname{C_3dgLieAlg}}
\renewcommand{\Schouten}[1]{\ch@irxllbbracket #1\ch@irxrrbbracket}
\newcommand{\coisoField}[1]{{\underline{\field{#1}}}}
\newcommand{\Def}{\functor{Def}}
\newcommand{\Hochschild}{\mathrm{HH}}
\newcommand{\Null}{{\scriptscriptstyle0}}
\newcommand{\Wobs}{{\scriptscriptstyle{\ch@irxscriptfont{N}}}}
\newcommand{\Total}{{\scriptscriptstyle\ch@irxscriptfont{tot}}}
\newcommand{\NULL}{0}
\newcommand{\WOBS}{\normalizer}
\newcommand{\TOTAL}{\mathrm{tot}}
\newcommand{\indiscrete}{{\scriptscriptstyle\ch@irxscriptfont{ind}}}
\newcommand{\discrete}{{\scriptscriptstyle\ch@irxscriptfont{dis}}}
\title{Deformation and Hochschild Cohomology of Coisotropic Algebras}
\date{\today}%
\begin{document}

\selectlanguage{english}

%
%

\maketitle
\newpage

%
%

\begin{abstract}
    Coisotropic algebras consist of triples of algebras for which a
    reduction can be defined and unify in a very algebraic fashion
    coisotropic reduction in several settings. In this paper we study
    the theory of (formal) deformation of coisotropic algebras showing
    that deformations are governed by suitable coisotropic DGLAs. We
    define a deformation functor and prove that it commutes with
    reduction. Finally, we study the obstructions to existence and
    uniqueness of coisotropic algebras and present some geometric
    examples.
\end{abstract}

\vspace*{2cm}

%
%

\tableofcontents
\newpage

%
%

\section{Introduction}
\label{sec:Introduction}

Symmetry reduction plays an important role in theoretical classical
mechanics and quantum physics, and its various mathematical
formulations have been studied extensively during the last half
century.  Probably the most well-known reduction procedure of this
kind is the so-called Marsden-Weinstein reduction
\cite{marsden.weinstein:1974a} of a symplectic manifold, which can
also be understood as a special case of coisotropic reduction of a
Poisson manifold.  This standard construction of Poisson geometry
allows to construct a new Poisson manifold out of a given coisotropic
submanifold of a Poisson manifold. The main motivation of such
reduction scheme comes from Dirac's idea \cite{dirac:1950a} of
quantizing the first-class constraints, which are described by
coisotropic submanifolds, and obtaining a quantized version of
coisotropic reduction.

Having this motivation in mind, one can choose deformation
quantization \cite{bayen.et.al:1978a}, see \cite{waldmann:2007a} for a
gentle introduction, to formulate quantization of Poisson geometry.
Here the idea is that a classical mechanical system which is
implemented by a Poisson manifold can equivalently be described by its
Poisson algebra of real-valued functions on it.  The quantized system
corresponds to a (formal) deformation of the commutative algebra of
functions such that the Poisson bracket gets deformed into the
commutator of the possibly non-commutative deformed algebra.  This
procedure relies on a classical principle stating that deformations of
mathematical objects are governed by associated differential graded
Lie algebras (DGLAs). More precisely, formal deformations of an
associative algebra $\algebra{A}$ in the sense of Gerstenhaber
\cite{gerstenhaber:1963a} are given by formal Maurer-Cartan elements
of the associated Hochschild DGLA $C^\bullet(\algebra{A})$, where two
such deformations are considered to be equivalent if they lie in the
same orbit of the action of the canonically associated gauge group.
This leads to the moduli space $\Def$ of formal deformations.  An
important tool to understand formal deformations of associative
algebras is Hochschild cohomology: the second and third Hochschild
cohomology groups contain obstructions to the existence and
equivalence of formal deformations.

In the setting of deformation quantization many versions of phase
space reduction are available, starting with a BRST approach in
\cite{bordemann.herbig.waldmann:2000a} and more general coisotropic
reduction schemes found in e.g. \cite{cattaneo.felder:2007a,
  cattaneo:2004a, cattaneo.felder:2004a,
  bordemann.herbig.pflaum:2007a, bordemann:2005a, bordemann:2004a:pre,
  gutt.waldmann:2010a}. Here reduction is treated in a very algebraic
fashion: the vanishing functions on the coisotropic submanifold are
deformed into a left ideal of the total algebra of all functions and
the reduced algebra is the quotient of the normalizer of this left
ideal modulo the ideal itself.

Recently, we introduced a more algebraic approach to reduction in both
the quantum and classical setting, see
\cite{dippell.esposito.waldmann:2019a}.  In particular, we defined the
notion of coisotropic algebra $\algebra{A}$ consisting of a unital
associative algebra $\algebra{A}_\Total$ together with a unital
subalgebra $\algebra{A}_\Wobs$ and a two-sided ideal
$\algebra{A}_\Null \subseteq \algebra{A}_\Wobs$.  Such coisotropic
algebras allow for a simple reduction procedure, with the reduced
algebra given by
$\algebra{A} = \algebra{A}_\Wobs / \algebra{A}_\Null$.  The eponymous
example is given by a Poisson manifold $M$ together with a coisotropic
submanifold $C$.  Then $(\Cinfty(M), \mathcal{B}_C, \mathcal{J}_C)$,
with $\mathcal{J}_C$ the ideal of functions vanishing on $C$ and
$\mathcal{B}_C$ the Poisson normalizer of $\mathcal{J}_C$, defines a
coisotropic algebra, and its reduced algebra
$\mathcal{B}_C / \mathcal{J}_C$ is isomorphic to the algebra of
functions $\Cinfty(M_\red)$ on the reduced manifold $M_\red$ if the
reduced space is actually smooth. It turns out that one has a
meaningful tensor product leading to a bicategory of bimodules over
coisotropic algebras such that reduction becomes a morphism of
bicategories. Moreover, reduction turns out the be compatible with
classical limits in a nice and general functorial way.  It is
important to notice that this notion recovers other examples coming
from Poisson geometry, e.g. \cite{esposito:2014a} and non commutative
geometry, as \cite{masson:1996a} and \cite{dandrea:2019a}.

Motivated by the significance of coisotropic algebras and their
classical limit, in this paper we develop the corresponding theory of
(formal) deformations. Following the above mentioned classical
principle, we introduce the notion of coisotropic DGLA and we study
formal deformations of the corresponding Maurer-Cartan elements. This
allows us to define a deformation functor and to prove that the
deformation functor commutes with reduction, in the sense that at
least an injective natural transformation exists, see
\autoref{thm:defvsred}. Applying these techniques to the case of the
coisotropic Hochschild complex of a coisotropic algebra we prove that
the existence and uniqueness of formal deformations of coisotropic
algebras are obstructed by its associated coisotropic Hochschild
cohomology, see \autoref{theorem:EquivalenceClassesOfDefs},
\autoref{theorem:Obstructions}.  Moreover, it is shown that the
construction of the coisotropic moduli space of deformations as well
as that of the associated Hochschild cohomology are compatible with
reduction.

The paper is organized as follows: in
\autoref{sec:CoisotropicStructures} some basic coisotropic versions of
classical algebraic structures, such as coisotropic modules,
coistropic algebras and coisotropic complexes, are introduced.  These
notions lead to a definition of a coisotropic DGLA.  In
\autoref{sec:CoisotropicDGLAs} coisotropic DGLAs together with their
coisotropic sets of Maurer-Cartan elements, their associated
coisotropic gauge groups and the formal deformation of coisotropic
Maurer-Cartan elements are considered and the compatibility of these
constructions with reduction is examined.  In the last
\autoref{sec:DeformationTheoryOfCoisotropicAlgebras} we introduce
coisotropic Hochschild cohomology for coisotropic algebras and apply
the results of \autoref{sec:CoisotropicDGLAs} to the case of the
coisotropic Hochschild complex.  Finally, some examples of formal
deformations of coisotropic algebras from geometry are given.

\vspace*{0.5cm}

\noindent
\textbf{Acknowledgements:} It is a pleasure to thank Andreas Kraft for
important remarks on this paper.

%
%

\section{Coisotropic Structures}
\label{sec:CoisotropicStructures}

\subsection{Preliminaries on Coisotropic Modules}
\label{sec:CoisotropicModules}

In the following $\field{k}$ denotes a fixed commutative unital ring,
where we adopt the convention that rings will always be associative.
Let us introduce the fundamental notion of a coisotropic
$\field{k}$-module, which is crucial to all further considerations.
\begin{definition}[Coisotropic $\field{k}$-modules]
    \label{definition:CoisoTriplesModules}%
    Let $\field{k}$ be a commutative unital ring.
    \begin{definitionlist}
    \item \label{item:CoisoTripleModule} A triple
        $\module{E} = (\module{E}_\Total, \module{E}_\Wobs,
        \module{E}_\Null)$ of $\field{k}$-bimodules together with a
        module homomorphism
        $\iota_\module{E} \colon \module{E}_\Wobs \longrightarrow
        \module{E}_\Total$ is called a \emph{coisotropic
          $\field{k}$-module} if
        $\module{E}_\Null \subseteq \module{E}_\Wobs$ is a sub-module.
    \item \label{item:CoisoTripleModuleMorphism} A \emph{morphism
        $\Phi\colon \module{E} \longrightarrow \module{F}$ of
        coisotropic $\field{k}$-modules} is a pair
        $(\Phi_\Total, \Phi_\Wobs)$ of module homomorphisms
        $\Phi_\Total \colon \module{E}_\Total \longrightarrow
        \module{F}_\Total$ and
        $\Phi_\Wobs\colon \module{E}_\Wobs \longrightarrow
        \module{F}_\Wobs$ such that
        $\Phi_\Total \circ \iota_\module{E} = \iota_{\module{F}} \circ
        \Phi_\Wobs$ and
        $\Phi_\Wobs(\module{E}_\Null) \subseteq \module{F}_\Null$.
    \item \label{item:CategoryCoisoTripleMods} The \emph{category of
        coisotropic $\field{k}$-modules} is denoted by
        $\CoisoModTriple_\coisoField{k}$ and the set of morphisms
        between coisotropic $\field{k}$-modules $\module{E}$ and
        $\module{F}$ is denoted by
        $\Hom_\coisoField{k}(\module{E},\module{F})$.
    \end{definitionlist}
\end{definition}

If the underlying ring is clear we will often just use the term
coisotropic module.  We will now collect some useful categorical
constructions for coisotropic modules.
The following statements can be proved by straightforward checks of
the categorical properties, see e.g. \cite{maclane:1998a}.  Let
$\module{E}$, $\module{F}$ be coisotropic modules and let
$\Phi, \Psi \colon \module{E} \to \module{F}$ be morphisms of
coisotropic modules.
\begin{remarklist}
\item[a)] \label{item:Monomorphisms} The morphism $\Phi$ is a
    \emph{monomorphism} if and only if $\Phi_\Total$ and $\Phi_\Wobs$
    are injective module homomorphisms.
\item[b)] \label{item:Epimorphisms} The morphism $\Phi$ is an
    \emph{epimorphism} if and only if $\Phi_\Total$ and $\Phi_\Wobs$
    are surjective module homomorphisms.
\item[c)] \label{item:RegularMono} The morphism $\Phi$ is a \emph{regular
      monomorphism} if and only if it is a monomorphism with
    $\Phi_\Wobs^{-1}(\module{F}_\Null) = \module{E}_\Null$.
\item[d)] \label{item:RegularEpi} The morphism $\Phi$ is a \emph{regular
      epimorphism} if and only if it is an epimorphism with
    $\Phi_\Wobs(\module{E}_\Null) = \module{F}_\Null$.
\end{remarklist}
Observe that the monomorphisms (epimorphisms) in
$\CoisoModTriple_\coisoField{k}$ do in general not agree with regular
monomorphisms (epimorphisms), showing that
$\CoisoModTriple_\coisoField{k}$ is \emph{not} an abelian category,
unlike the usual categories of modules. This will cause some
complications later on.
\begin{remarklist}
\item[e)] \label{Kernel} The \emph{kernel} of $\Phi$ is given by the
    coisotropic module
    \begin{equation}
        \ker(\Phi)
        =
        \big(
        \ker(\Phi_\Total),
        \;
        \ker(\Phi_\Wobs),
        \;
        \ker(\Phi_\Wobs) \cap \module{E}_0
        \big)
    \end{equation}
    with $\iota_{\ker} \colon \ker(\Phi_\Wobs) \to \ker(\Phi_\Total)$
    being the morphism induced by $\iota_{\module{E}}$.
\item[f)] \label{item:Cokernel} The cokernel of $\Phi$ is given by the
    coisotropic module
    \begin{equation}
        \coker(\Phi)
        =
	\big(
        \module{F}_\Total / \image(\Phi_\Total),
        \;
	\module{F}_\Wobs / \image(\Phi_\Wobs),
        \;
	\module{F}_\Null / \image(\Phi_\Wobs)
        \big)
    \end{equation}
    with
    $\iota_{\coker} \colon \module{F}_\Wobs / \image(\Phi_\Wobs) \to
    \module{F}_\Total / \image(\Phi_\Total)$ being the morphism
    induced by $\iota_\module{F}$.
\item[g)] \label{CoisoImage} The coisotropic module
    $\image(\Phi) := \coker(\ker \Phi)$ is given by
    \begin{equation}
        \image(\Phi)
        =
        \big(
        \image(\Phi_\Total),
        \;
        \image(\Phi_\Wobs),
        \;
        \image\big(\Phi_\Wobs\at{\module{E}_\Null}\big)
        \big).
    \end{equation}
    It will be called the \emph{image} of $\Phi$.
\item[h)] \label{CoisoCoimage} The coisotropic module
    $\regimage(\Phi) := \ker(\coker \Phi)$ is given by
    \begin{equation}
        \regimage(\Phi)
        =
        \big(
        \image(\Phi_\Total),
        \;
        \image(\Phi_\Wobs),
        \;
        \image(\Phi_\Wobs) \cap \module{F}_\Null
        \big).
    \end{equation}
    It will be called the \emph{regular image} of $\Phi$.
\end{remarklist}
In the case of abelian categories, there is a canonical image
factorization as $\coker (\ker \Phi ) \simeq \ker(\coker \Phi)$ for
every morphism.  This is not the case in the non-abelian category
$\CoisoModTriple_\coisoField{k}$, leading to two different
factorization systems.  Using the image every morphism of coisotropic
modules can be factorized into a regular epimorphism and a
monomorphism while using the regular image allows for a factorization
into an epimorphism and a regular monomorphism.
\begin{remarklist}
\item[i)]\label{item:Coequalizer} The \emph{coequalizer} of $\Phi$ and
    $\Psi$ is given by the coisotropic module
    \begin{equation}
        \label{eq:Coequalizer}
	\Coeq(\Phi,\Psi)
	=
        \big(
        \Coeq(\Phi_\Total, \Psi_\Total),
        \;
        \Coeq(\Phi_\Wobs, \Psi_\Wobs),
        \;
        q_\Wobs(\module{F}_\Null)
        \big)
    \end{equation}
    with
    $q_\Wobs \colon \module{F}_\Wobs \to \Coeq(\Phi_\Wobs,
    \Psi_\Wobs)$ being the coequalizer morphism of
    $\Phi_\Wobs, \Psi_\Wobs$ and
    $\iota_{\Coeq} \colon \Coeq(\Phi_\Wobs, \Psi_\Wobs) \to
    \Coeq(\Phi_\Total, \Psi_\Total)$ being the morphism induced by the
    morphisms $\Phi_\Wobs \circ \iota_\module{F}$ and
    $\Psi_\Wobs \circ \iota_\module{F}$.
\item[j)] \label{Quotient} Let $\module{E}' \subseteq \module{E}$ be a
    coisotropic submodule, i.e.
    $\module{E}'_\Total \subseteq \module{E}_\Total$,
    $\module{E}'_\Wobs \subseteq \module{E}_\Wobs$ and
    $\module{E}'_\Null \subseteq \module{E}_\Null$, and denote by
    $\mathrm{i} \colon \module{E}' \to \module{E}$ the inclusion
    morphism.  The \emph{quotient} of $\module{E}$ by $\module{E}'$ is
    then the coequalizer of $\mathrm{i}$ and the zero map.  More
    explicitly, we get
    \begin{equation}
        \label{eq:QuotientsCoisotropicModules}
	\module{E} / \module{E}'
	=
        \big(
        \module{E}_\Total / \module{E}'_\Total,
        \;
	\module{E}_\Wobs / \module{E}'_\Wobs,
        \;
	\module{E}_\Null / \module{E}'_\Wobs
        \big).
    \end{equation}
\item[k)] \label{item:Coproduct} The \emph{coproduct} of $\module{E}$ and
    $\module{F}$ is given by
    \begin{equation}
        \module{E} \oplus \module{F}
        =
        \big(
        \module{E}_\Total \oplus \module{F}_\Total,
        \;
        \module{E}_\Wobs \oplus \module{F}_\Wobs,
        \;
        \module{E}_\Null \oplus \module{F}_\Null
        \big)
    \end{equation}
    with $\iota_\oplus = \iota_\module{E} + \iota_\module{F}$. It is
    called the \emph{direct sum} of $\module{E}$ and $\module{F}$.  It
    should be clear that also infinite direct sums can be defined this
    way.  Finite direct sums of coisotropic modules can be shown to be
    biproducts for the category $\CoisoModTriple_\coisoField{k}$.  In
    particular, it is clear that also products exist.
\end{remarklist}

A fundamental notion in this setting is the tensor product of
coisotropic modules.  This is an additional piece of information and
is not fixed solely from the definition of the category
$\CoisoModTriple_\coisoField{k}$.
\begin{definition}[Tensor product]
    \label{def:TensorProductCoisotropicModules}%
    Let $\module{E}, \module{F} \in \CoisoModTriple_\coisoField{k}$ be
    coisotropic modules.  The coisotropic module
    \begin{equation}
	\module{E} \tensor \module{F}
	=
        \big(
        \module{E}_\Total \tensor \module{F}_\Total,
        \;
	\module{E}_\Wobs \tensor \module{F}_\Wobs,
        \;
	\module{E} _\Wobs \tensor \module{F}_\Null
        +
        \module{E}_\Null \tensor \module{F}_\Wobs
        \big)
    \end{equation}
    with $\iota_{\tensor} = \iota_\module{E} \tensor \iota_\module{F}$
    is called the \emph{tensor product} of $\module{E}$ and
    $\module{F}$.
\end{definition}
\begin{remark}
    \label{remark:TensorProduct}%
    Let $\module{E}, \module{F} \in \CoisoModTriple_\coisoField{k}$ be
    coisotropic modules.
    \begin{remarklist}
    \item \label{item:TensorProductIsCoiso} The triple
        $\module{E} \tensor \module{F}$ is indeed a coisotropic
        $\field{k}$-module.  In particular,
        $(\module{E} \tensor \module{F})_\Null = \module{E}_\Wobs
        \tensor \module{F}_\Null + \module{E}_\Null \tensor
        \module{F}_\Wobs$ is the submodule of
        $\module{E}_\Wobs \tensor \module{F}_\Wobs$ generated by
        elements of the form $x \tensor y$ with
        $x \in \module{E}_\Wobs$, $y \in \module{F}_\Null$ or
        $y \in \module{E}_\Null$, $y \in \module{F}_\Wobs$.
    \item \label{item:iotaNotInjective} The reason we did not insist
        on $\iota$ being injective in
        \autoref{definition:CoisoTriplesModules} is that the
        injectivity of $\iota_{\tensor}$ may be spoiled by torsion
        effects.  Nevertheless, in many examples this will be the
        case.
    \end{remarklist}
\end{remark}

This definition of tensor product allows us to construct a functor
$\tensor \colon \CoisoModTriple_\coisoField{k} \times
\CoisoModTriple_\coisoField{k} \to \CoisoModTriple_\coisoField{k}$,
which together with the coisotropic module
$\coisoField{k} = (\field{k},\field{k},0)$ as unit object turns
$\CoisoModTriple_\coisoField{k}$ into a (weak) monoidal category, see
e.g. \cite{etingof.gelaki.nikshych.ostrik:2015a}.

\begin{remarklist}
\item[l)] \label{item:SymmetricMonoidal} The monoidal category
    $\CoisoModTriple_\coisoField{k}$ is a symmetric monoidal category
    with symmetry
    $\tau \colon \module{E} \tensor \module{F} \to \module{F} \tensor
    \module{E}$ given by
    $\tau_{\Total/\Wobs}(x \tensor y) = y \tensor x$.
\item[m)] \label{InternalHom} The internal Hom of $\module{E}$ and
    $\module{F}$ is given by the coisotropic module
    \begin{equation}
        \begin{split}
            \CoisoHom_\coisoField{k}(\module{E},\module{F})_\Total
            &:=
            \Hom_\field{k}(\module{E}_\Total, \module{F}_\Total), \\
            \CoisoHom_\coisoField{k}(\module{E},\module{F})_\Wobs
            &:=
            \Hom_\coisoField{k}(\module{E},\module{F}), \\
            \CoisoHom_\coisoField{k}(\module{E},\module{F})_\Null
            &:=
            \left\{
                (\Phi_\Total, \Phi_\Wobs)
                \in
                \Hom_\coisoField{k}(\module{E},\module{F})
                \mid
                \Phi_\Wobs(\module{E}_\Wobs) \subseteq \module{F}_\Null
            \right\},
        \end{split}
    \end{equation}
    where
    $\iota \colon \Hom_\coisoField{k}(\module{E},\module{F}) \to
    \Hom_\field{k}(\module{E}_\Total, \module{F}_\Total)$ is the
    projection onto the first component.  We will denote the
    coisotropic module of endomorphisms by
    $\CoisoEnd_{\coisoField{k}}(\module{E}) :=
    \CoisoHom_\coisoField{k}(\module{E},\module{E})$.  Similarly, the
    coisotropic automorphisms are denoted by
    $\CoisoAut_{\coisoField{k}}$.  This internal Hom is in fact right
    adjoint to the tensor product. More precisely, we have
    $\argument \tensor \module{E}$ is left adjoint to
    $\CoisoHom(\module{E},\argument)$, showing that
    $\CoisoModTriple_\coisoField{k}$ is in fact closed monoidal.  From
    this follows in particular that for every $x \in \module{E}_\Wobs$
    and $\Phi \colon \module{E} \tensor \module{F} \to \module{G}$ we
    get a coisotropic coevaluation morphism of modules
    $\Phi(x, \argument) \colon \module{F} \to \module{G}$.
\end{remarklist}

Let us stress that $\Hom_\coisoField{k}(\module{E}, \module{F})$ only
denotes the set of coisotropic morphisms and
$\CoisoHom_\coisoField{k}(\module{E}, \module{F})$ denotes the full
coisotropic module of morphisms.  The definition of coisotropic
modules allows us to reinterpret several (geometric) reduction
procedures in a completely algebraic fashion, as stated in the
following straightforward proposition.
\begin{proposition}[Reduction]
    \label{prop:Reduction}%
    Mapping a coisotropic module $\module{E}$ to the quotient
    $\module{E}_\red = \module{E}_\Wobs / \module{E}_\Null$ and
    morphisms of coisotropic modules to the induced morphisms yields a
    monoidal functor
    \begin{equation}
        \red \colon \CoisoModTriple_\coisoField{k} \to \Modules_\field{k},
    \end{equation}
    where the category $\Modules_\field{k}$ of $\field{k}$-bimodules
    is equipped with the usual tensor product.
\end{proposition}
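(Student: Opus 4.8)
The plan is to verify first that $\red$ is a well-defined functor and then to equip it with a monoidal structure. On objects, the quotient $\module{E}_\red = \module{E}_\Wobs / \module{E}_\Null$ is a genuine $\field{k}$-bimodule precisely because $\module{E}_\Null \subseteq \module{E}_\Wobs$ is a submodule, as required in \autoref{definition:CoisoTriplesModules}. On morphisms, given $\Phi = (\Phi_\Total, \Phi_\Wobs) \colon \module{E} \to \module{F}$, the defining condition $\Phi_\Wobs(\module{E}_\Null) \subseteq \module{F}_\Null$ guarantees that $\Phi_\Wobs$ descends to a well-defined map $\Phi_\red \colon \module{E}_\Wobs / \module{E}_\Null \to \module{F}_\Wobs / \module{F}_\Null$ of $\field{k}$-bimodules, which we take to be $\red(\Phi)$. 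Since passing to the induced map on quotients preserves identities and respects composition, functoriality is immediate.

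For the monoidal structure I would produce a natural isomorphism $\red(\module{E} \tensor \module{F}) \cong \red(\module{E}) \tensor \red(\module{F})$. By \autoref{def:TensorProductCoisotropicModules} the left-hand side is the quotient
\[
    (\module{E}_\Wobs \tensor \module{F}_\Wobs) \big/ (\module{E}_\Wobs \tensor \module{F}_\Null + \module{E}_\Null \tensor \module{F}_\Wobs),
\]
whereas the right-hand side is $(\module{E}_\Wobs / \module{E}_\Null) \tensor (\module{F}_\Wobs / \module{F}_\Null)$. These agree by the classical comparison coming from right-exactness of the tensor product: the projections $\pi_\module{E} \colon \module{E}_\Wobs \to \module{E}_\Wobs / \module{E}_\Null$ and $\pi_\module{F} \colon \module{F}_\Wobs \to \module{F}_\Wobs / \module{F}_\Null$ are surjective, hence so is $\pi_\module{E} \tensor \pi_\module{F}$, and applying $\argument \tensor \module{F}_\Wobs$ and then $(\module{E}_\Wobs / \module{E}_\Null) \tensor \argument$ to the two exact sequences $\module{E}_\Null \to \module{E}_\Wobs \to \module{E}_\Wobs / \module{E}_\Null \to 0$ and $\module{F}_\Null \to \module{F}_\Wobs \to \module{F}_\Wobs / \module{F}_\Null \to 0$ identifies the kernel of $\pi_\module{E} \tensor \pi_\module{F}$ with exactly the image submodule $\module{E}_\Wobs \tensor \module{F}_\Null + \module{E}_\Null \tensor \module{F}_\Wobs$. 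The first isomorphism theorem then yields the isomorphism $[x \tensor y] \mapsto [x] \tensor [y]$.

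Naturality in both arguments is checked on generators $[x \tensor y]$, using that $\red$ is defined through the component maps. For the unit I observe that $\red(\coisoField{k}) = \field{k}/0 = \field{k}$ is the monoidal unit of $\Modules_\field{k}$, so the unit constraint is the identity. The associativity and unit coherence constraints then reduce, on representatives $[x \tensor y \tensor z]$, to the corresponding coherence isomorphisms of $\Modules_\field{k}$, and therefore hold automatically.

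I expect the only genuine obstacle to be the monoidality step, specifically the identification of $\ker(\pi_\module{E} \tensor \pi_\module{F})$ with $(\module{E} \tensor \module{F})_\Null$: one must argue using only right-exactness of $\tensor$, since the leftward maps can fail to be injective because of the torsion phenomena flagged in \autoref{remark:TensorProduct}. Everything else, namely well-definedness, functoriality, and the coherence conditions, is routine bookkeeping once this identification is in place.
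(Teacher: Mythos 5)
Your proposal is correct and supplies exactly the argument the paper leaves implicit when it declares this proposition a ``straightforward check'': functoriality is routine, and the only substantive point is the monoidal structure, which you correctly obtain from the standard two-step right-exactness computation identifying $\ker(\pi_\module{E} \tensor \pi_\module{F})$ with the submodule generated by $\module{E}_\Wobs \tensor \module{F}_\Null$ and $\module{E}_\Null \tensor \module{F}_\Wobs$, i.e.\ with $(\module{E} \tensor \module{F})_\Null$ as described in \autoref{remark:TensorProduct}. Your caution that one must work with images rather than assume injectivity of the inclusion-induced maps is precisely the right point to flag, given \autoref{remark:TensorProduct}, \ref{item:iotaNotInjective}.
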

\begin{remark}
    \label{rmk:ReductionOfInternalHom}%
    Since the internal Hom
    $\CoisoHom_\coisoField{k}(\module{E},\module{F})$ is a coisotropic
    module itself we can apply the reduction functor $\red$ to it.
    There is a canonical $\field{k}$-module morphism from
    $\CoisoHom_\coisoField{k}(\module{E},\module{F})_\red$ to
    $\Hom_\field{k}(\module{E}_\red, \module{F}_\red)$ given by
    mapping $[(\Phi_\Total,\Phi_\Wobs)]$ to the map $[\Phi_\Wobs]$
    induced by $\Phi_\Wobs$ on the quotient
    $\module{E}_\red = \module{E}_\Wobs / \module{E}_\Null$.  Note
    that this morphism is injective.  Therefore, we can view
    $\CoisoHom_\coisoField{k}(\module{E},\module{F})_\red$ as the
    submodule of $\Hom_\field{k}(\module{E}_\red,\module{F}_\red)$
    consisting of morphism that allow for a extension to the
    $\TOTAL$-components of $\module{E}$ and $\module{F}$.
\end{remark}

%
%

\subsection{Coisotropic Algebras \& Derivations}
\label{sec:CoisotropicAlgebras}

Consider again the prototypical example of a coisotropic submanifold
$C \hookrightarrow M$ of a given Poisson manifold $(M,\pi)$.  Then the
coisotropic module $(\Cinfty(M), \mathcal{B}_C,\mathcal{J}_C)$
obviously carries more structure than a mere coisotropic module.  In
particular, $\Cinfty(M)$ is an associative algebra with
$\mathcal{B}_C \subseteq \Cinfty(M)$ a subalgebra and
$\mathcal{J}_C \subseteq \mathcal{B}_C$ a two-sided ideal.  This is
now captured by the following definition of a coisotropic algebra.
\begin{definition}[Coisotropic algebra]
    \label{def:CoisotropicAlgebras}%
    Let $\field{k}$ be a commutative unital ring.
    \begin{definitionlist}
    \item \label{item:CoisotropicAlgebra} A \emph{coisotropic algebra}
        over $\mathbb{k}$ is a triple
        $\algebra{A} = (\algebra{A}_\Total, \algebra{A}_\Wobs,
        \algebra{A}_\Null)$ consisting of unital associative algebras
        $\algebra{A}_\Total$, $\algebra{A}_\Wobs$ and a two-sided
        ideal $\algebra{A}_\Null \subseteq \algebra{A}_\Wobs$ together
        with a unital algebra homomorphism
        $\iota \colon \algebra{A}_\Wobs \to \algebra{A}_\Total$.
    \item \label{item:MorphismCoisoAlgebra} A \emph{morphism
        $\Phi \colon \algebra{A} \to \algebra{B}$ of coisotropic
        algebras} is given by a pair of unital algebra homomorphisms
        $\Phi_\Total \colon \algebra{A}_\Total \to \algebra{B}_\Total$
        and
        $\Phi_\Wobs \colon \algebra{A}_\Wobs \to \algebra{B}_\Wobs$
        such that
        $\iota_\algebra{B} \circ \Phi_\Wobs = \Phi_\Total \circ
        \iota_\algebra{A}$ and
        $\Phi_\Wobs(\algebra{A}_\Null) \subseteq \algebra{B}_\Null$.
    \item \label{item:CategoryCoisoAlg} The \emph{category of coisotropic
        $\field{k}$-algebras} is denoted by
        $\CoisoAlgTriple_\coisoField{k}$.
    \end{definitionlist}
\end{definition}
Coisotropic algebras can also be understood as internal algebras in
the monoidal category $\CoisoModTriple_\coisoField{k}$.  Here the
particular definition of the tensor product of coisotropic modules,
see \autoref{def:TensorProductCoisotropicModules}, is crucial in order
to realize $\algebra{A}_\Null$ as a two-sided ideal in
$\algebra{A}_\Wobs$.  Note that the definition of a coisotropic
algebra as provided above generalizes the one given in
\cite{dippell.esposito.waldmann:2019a} slightly in that we do not
assume $\iota \colon \algebra{A}_\Wobs \to \algebra{A}_\Total$ to be
injective and $\algebra{A}_\Null$ needs not to be a left-ideal in
$\algebra{A}_\Total$. Nevertheless, in most of our applications these
additional features (requirements in
\cite{dippell.esposito.waldmann:2019a}) will be satisfied.

\begin{remark}
    \label{remark:ReducedAlgebra}%
    Since $\algebra{A}_\Null \subseteq \algebra{A}_\Wobs$ is a
    two-sided ideal by definition, we can construct a reduced algebra
    $\algebra{A}_\red = \algebra{A}_\Wobs / \algebra{A}_\Null$ similar
    to \autoref{prop:Reduction}. This yields a functor
    $\red \colon \CoisoAlgTriple_{\coisoField{k}} \to
    \Algebras_\field{k}$.
\end{remark}
\begin{example}
    \label{ex:CoisotropicAlgebras}%
    \begin{examplelist}
    \item \label{ex:CoisotropicAlgebrasFoliation} Let $C \subseteq M$
        be a submanifold and let $\mathcal{F}$ be a foliation on $C$.
        Then
        $\algebra{A} = (\Cinfty(M),
        \Cinfty(M)^\mathcal{F},\mathcal{J}_C)$, with
        $\Cinfty(M)^\mathcal{F}$ the set of functions on $M$ constant
        along the leaves on $C$ and $\mathcal{J}_C$ the vanishing
        ideal of $C$, is a coisotropic algebra.  As soon as the leaf
        space $C/\mathcal{F}$ carries a canonical manifold structure
        we have $\algebra{A}_\red \simeq \Cinfty(C/\mathcal{F})$.
    \item \label{ex:CoisotropicAlgebrasPoisson} Let $(M,\pi)$ be a
        Poisson manifold together with a coisotropic submanifold
        $C \hookrightarrow M$.  Then
        $\algebra{A} = (\Cinfty(M), \mathcal{B}_C, \mathcal{J}_C)$ is
        a coisotropic algebra and
        $\algebra{A}_\red \cong \mathcal{B}_C / \mathcal{J}_C$ turns
        out to be even a Poisson algebra.
    \end{examplelist}
\end{example}

On one hand, from an algebraic point of view, representations are
important in the study of algebraic structures. On the other hand, by
the famous Serre-Swan theorem, vector bundles over manifolds can
equivalently be understood as finitely generated projective modules
over the algebra of functions on the manifold. This justifies to take
a closer look at modules in our context as well. The following gives a
useful notion of (bi-)module over coisotropic algebras:
\begin{definition}[Bimodules over coisotropic algebras]
    \label{definition:CoisoBimodules}%
    Let $\algebra{A}, \algebra{B} \in \CoisoAlgTriple_\coisoField{k}$
    be coisotropic algebras.
    \begin{definitionlist}
    \item \label{item:CoisoTripleBimodule} A triple
        $\module{E} = (\module{E}_\Total, \module{E}_\Wobs,
        \module{E}_\Null)$ consisting of a
        $(\algebra{B}_\Total, \algebra{A}_\Total)$-bimodule
        $\module{E}_\Total$ and
        $(\algebra{B}_\Wobs, \algebra{A}_\Wobs)$-bimodules
        $\module{E}_\Wobs$ and $\module{E}_\Null$ together with a
        bimodule morphism
        $\iota_\module{E} \colon \module{E}_\Wobs \longrightarrow
        \module{E}_\Total$ along the morphisms
        $\iota_\algebra{B} \colon \algebra{B}_\Wobs \to
        \algebra{B}_\Total$ and
        $\iota_\algebra{A} \colon \algebra{A}_\Wobs \to
        \algebra{A}_\Total$ is called a \emph{coisotropic
          $(\algebra{B},\algebra{A})$-bimodule} if
        $\module{E}_\Null \subseteq \module{E}_\Wobs$ is a
        sub-bimodule such that
        \begin{equation}
            \label{eq:BimoduleTripleCondition}
            \algebra{B}_\Null \cdot \module{E}_\Wobs
            \subseteq
            \module{E}_\Null
            \quad
            \textrm{and}
            \quad
            \module{E}_\Wobs \cdot \algebra{A}_\Null
            \subseteq
            \module{E}_\Null.
        \end{equation}
    \item \label{item:CoisoTripleBimoduleMorphism} A \emph{morphism
        $\Phi\colon \module{E} \longrightarrow \tilde{\module{E}}$
        between coisotropic $(\algebra{B}, \algebra{A})$-bimodules} is
        a pair $(\Phi_\Total, \Phi_\Wobs)$ of a
        $(\algebra{B}_\Total, \algebra{A}_\Total)$-bimodule morphism
        $\Phi_\Total \colon \module{E}_\Total \longrightarrow
        \tilde{\module{E}}_\Total$ and a
        $(\algebra{B}_\Wobs,\algebra{A}_\Wobs)$-bimodule morphism
        $\Phi\colon \module{E}_\Wobs \longrightarrow
        \tilde{\module{E}}_\Wobs$ such that
        $\Phi_\Total \circ \iota_\module{E} =
        \iota_{\tilde{\module{E}}} \circ \Phi_\Wobs$ and
        $\Phi_\Wobs(\module{E}_\Null) \subseteq
        \tilde{\module{E}}_\Null$.
    \item \label{item:CategoryCoisoTripleBimods} The \emph{category of
        coisotropic $(\algebra{B}, \algebra{A})$-bimodules} is denoted
        by $\CoisoBimodTriple(\algebra{B}, \algebra{A})$.
    \end{definitionlist}
\end{definition}
Note that a coisotropic $(\algebra{B},\algebra{A})$-bimodule
$\module{E}$ can also be defined as a coisotropic $\field{k}$-module
together with morphisms
$\lambda \colon \algebra{B} \tensor \module{E} \to \module{E}$ and
$\rho \colon \module{E} \tensor \algebra{A} \to \module{E}$ of
coisotropic modules implementing the module structure.  The tensor
product of coisotropic $\field{k}$-modules as defined in
\autoref{def:TensorProductCoisotropicModules} can be extended to
bimodules over coisotropic algebras in the following way.
\begin{lemma}
    \label{lemma:TensorProductBimodules}%
    Let $\algebra{A}$, $\algebra{B}$ and $\algebra{C}$ be coisotropic
    algebras and let
    $\module{F} \in \CoisoBimodTriple{(\algebra{C}, \algebra{B})}$ as
    well as
    $\module{E} \in \CoisoBimodTriple{(\algebra{B},\algebra{A})}$ be
    corresponding bimodules. Then
    $\algbimodule{C}{F}{B} \tensor[\algebra{B}] \algbimodule{B}{E}{A}$
    given by the components
    \begin{align}
	\label{eq:TotalComponentTensorProduct}
	\left(
	\algbimodule{C}{F}{B}
	\tensor[\algebra{B}]
	\algbimodule{B}{E}{A}
	\right)_{\Total}
	&=
	\module{F}_{\Total}
	\tensor[\algebra{B}_{\Total}]
	\module{E}_{\Total}, \\
	\label{eq:WobsComponentTensorProduct}
	\left(
	\algbimodule{C}{F}{B}
	\tensor[\algebra{B}]
	\algbimodule{B}{E}{A}
	\right)_{\Wobs}
	&=
	\module{F}_{\Wobs}
	\tensor[\algebra{B}_{\Wobs}]
	\module{E}_{\Wobs}, \\
	\label{eq:NullComponentTensorProduct}
	\left(
	\algbimodule{C}{F}{B}
	\tensor[\algebra{B}]
	\algbimodule{B}{E}{A}
	\right)_{\Null}
	&=
	\module{F}_{\Wobs}
	\tensor[\algebra{B}_\Wobs]
	\module{E}_{\Null}
	+
	\module{F}_{\Null}
	\tensor[\algebra{B}_\Wobs]
	\module{E}_\Wobs
    \end{align}
    with $\iota_{\tensor} = \iota_\module{F} \tensor \iota_\module{E}$
    is a $(\algebra{C}, \algebra{A})$-bimodule.
\end{lemma}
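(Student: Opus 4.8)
The plan is to verify, one condition at a time, the defining properties of a coisotropic $(\algebra{C},\algebra{A})$-bimodule from \autoref{definition:CoisoBimodules} for the proposed triple. None of the individual checks is deep; the content lies in tracking how the three layers interact and, above all, in handling the fact that the $\Total$- and $\Wobs$-components are balanced tensor products over \emph{different} base rings, namely $\algebra{B}_\Total$ and $\algebra{B}_\Wobs$.

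First I would record the purely classical input. Since $\module{F}_\Total$ is a $(\algebra{C}_\Total,\algebra{B}_\Total)$-bimodule and $\module{E}_\Total$ a $(\algebra{B}_\Total,\algebra{A}_\Total)$-bimodule, the balanced tensor product $\module{F}_\Total \tensor[\algebra{B}_\Total] \module{E}_\Total$ is a $(\algebra{C}_\Total,\algebra{A}_\Total)$-bimodule in the usual way, and likewise $\module{F}_\Wobs \tensor[\algebra{B}_\Wobs] \module{E}_\Wobs$ is a $(\algebra{C}_\Wobs,\algebra{A}_\Wobs)$-bimodule. For the $\Null$-component I read the sum $\module{F}_\Wobs \tensor[\algebra{B}_\Wobs] \module{E}_\Null + \module{F}_\Null \tensor[\algebra{B}_\Wobs] \module{E}_\Wobs$ as the submodule of $\module{F}_\Wobs \tensor[\algebra{B}_\Wobs] \module{E}_\Wobs$ generated by the images of the two summands, in exact analogy with \autoref{remark:TensorProduct}. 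Because $\module{E}_\Null \subseteq \module{E}_\Wobs$ and $\module{F}_\Null \subseteq \module{F}_\Wobs$ are sub-bimodules, each generating set is stable under the left $\algebra{C}_\Wobs$- and the right $\algebra{A}_\Wobs$-action, so their sum is a $(\algebra{C}_\Wobs,\algebra{A}_\Wobs)$-sub-bimodule, which already yields the inclusion of the $\Null$- into the $\Wobs$-component required by the definition.

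Next I would treat $\iota_\tensor = \iota_\module{F} \tensor \iota_\module{E}$, which is where the mismatch of base rings must be resolved. The point is that $f \tensor e \mapsto \iota_\module{F}(f) \tensor \iota_\module{E}(e)$ is well defined as a map out of the $\algebra{B}_\Wobs$-balanced tensor product into the $\algebra{B}_\Total$-balanced one. This follows because $\iota_\module{F}$ and $\iota_\module{E}$ are bimodule morphisms along $\iota_\algebra{B}$: for $b \in \algebra{B}_\Wobs$ one computes $\iota_\module{F}(f \cdot b) \tensor \iota_\module{E}(e) = (\iota_\module{F}(f) \cdot \iota_\algebra{B}(b)) \tensor \iota_\module{E}(e) = \iota_\module{F}(f) \tensor (\iota_\algebra{B}(b) \cdot \iota_\module{E}(e)) = \iota_\module{F}(f) \tensor \iota_\module{E}(b \cdot e)$, so the $\algebra{B}_\Wobs$-relations become trivial in the target. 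That $\iota_\tensor$ is then a bimodule morphism along $\iota_\algebra{C}$ and $\iota_\algebra{A}$ is immediate from the corresponding properties of $\iota_\module{F}$ and $\iota_\module{E}$ on the outer factors, and no injectivity is claimed or needed.

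Finally I would verify the two compatibility conditions of \eqref{eq:BimoduleTripleCondition}. On a generator $f \tensor e$ of the $\Wobs$-component, the left condition reads $c \cdot (f \tensor e) = (c \cdot f) \tensor e$ for $c \in \algebra{C}_\Null$; since $\module{F}$ is coisotropic we have $\algebra{C}_\Null \cdot \module{F}_\Wobs \subseteq \module{F}_\Null$, so $(c \cdot f) \tensor e$ lands in $\module{F}_\Null \tensor[\algebra{B}_\Wobs] \module{E}_\Wobs$, hence in the $\Null$-component. Symmetrically $(f \tensor e) \cdot a = f \tensor (e \cdot a)$ for $a \in \algebra{A}_\Null$ lies in $\module{F}_\Wobs \tensor[\algebra{B}_\Wobs] \module{E}_\Null$, using $\module{E}_\Wobs \cdot \algebra{A}_\Null \subseteq \module{E}_\Null$. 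I expect the main obstacle to be essentially bookkeeping, namely confirming that the outer module actions genuinely descend to the balanced tensor products and remain mutually compatible across the three layers; the single genuinely non-formal observation is the well-definedness of $\iota_\tensor$ across the two base rings $\algebra{B}_\Wobs$ and $\algebra{B}_\Total$, handled above.
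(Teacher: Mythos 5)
Your verification is correct and complete: it checks exactly the conditions of \autoref{definition:CoisoBimodules}, and it correctly isolates the one non-routine point, namely that $\iota_\module{F} \tensor \iota_\module{E}$ descends from the $\algebra{B}_\Wobs$-balanced to the $\algebra{B}_\Total$-balanced tensor product because $\iota_\module{F}$ and $\iota_\module{E}$ are bimodule morphisms along $\iota_\algebra{B}$. The paper states this lemma without proof, treating it as a routine check, and your argument is precisely the verification being left implicit there.
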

Coisotropic $\field{k}$-modules can be understood as bimodules for the
coisotropic algebra
$\coisoField{k} = \left( \field{k}, \field{k}, 0 \right)$, explaining
our notation for the category $\CoisoModTriple_\coisoField{k}$ of
coisotropic $\field{k}$-modules.

\begin{example}
    \label{example:CovariantDerivativeFoliation}%
    Let $\iota \colon C \subseteq M$ be a submanifold and
    $D\subseteq TC$ an integrable distribution on $C$.  Let moreover
    $E_\Total \to M$ be vector bundle over $M$, $E_\Wobs \to M$ a
    subbundle of $E_\Total$ and $E_\Null \to C$ a subbundle of
    $\iota^\#E_\Wobs$.  Moreover, let $\nabla$ be a flat partial
    $D$-connection on $\iota^\#E_\Wobs$.  Then setting
    \begin{gather}
        \label{eq:EtotVB}
        \module{E}_\Total
        =
        \Secinfty(E_\Total),
        \\
        \module{E}_\Wobs
        =
        \left\{
            s \in \Secinfty(E_\Wobs)
            \Bigm|
            \nabla_X \iota^\#s = 0
            \textrm{ for all }
            X \in \Secinfty(D)
        \right\},
        \\
        \shortintertext{and}
        \module{E}_\Null
        =
        \left\{
            s \in \Secinfty(E_\Wobs)
            \Bigm|
            \nabla_X \iota^\#s = 0
            \textrm{ for all }
            X \in \Secinfty(D)
            \text{ and }
            \iota^\#s \in \Secinfty(E_\Null)
        \right\}
    \end{gather}
    defines a coisotropic $\algebra{A}$-module $\module{E}$ for
    $\algebra{A} = (\Cinfty(M), \Cinfty(M)^\mathcal{F},
    \mathcal{J}_C)$ as in \autoref{ex:CoisotropicAlgebras},
    \ref{ex:CoisotropicAlgebrasFoliation} with $\mathcal{F}$ the
    foliation induced by $D$.  Note that the construction of
    $\module{E}_\Wobs$ strongly depends on the choice of the covariant
    derivative.  Coisotropic modules of this form are important in a
    coisotropic version of the Serre-Swan theorem, see
    \cite{dippell.menke.waldmann:2020a}.
\end{example}

Coisotropic algebras together with coisotropic bimodules, their
morphisms and their tensor product as above can be arranged in a
bicategory structure.  Mapping a coisotropic algebra $\algebra{A}$ to
its reduced algebra
$\algebra{A}_\red = \algebra{A}_\Total / \algebra{A}_\Wobs$ and a
coisotropic $(\algebra{A},\algebra{B})$-bimodule $\module{E}$ to the
$(\algebra{A}_\red, \algebra{B}_\red)$-bimodule
$\module{E}_\red = \module{E}_\Wobs / \module{E}_\Null$ defines a
functor of bicategories, see \cite{dippell.esposito.waldmann:2019a}.

From a geometric perspective the tangent bundle of a given manifold
corresponds to the derivations of the algebra of functions on that
manifold by taking sections. In order to give a definition of a
derivation of a coisotropic algebra we rephrase the classical
definition in an element-independent way.
\begin{definition}[Derivation]
    \label{def:CoisotropicDerivation}%
    Let $\module{M} \in \CoisoBimodTriple(\algebra{A},\algebra{A})$ be
    an $\algebra{A}$-bimodule.  A \emph{derivation} with values in
    $\module{M}$ is a morphism
    $D \colon \algebra{A} \longrightarrow \module{M}$ of coisotropic
    $\field{k}$-modules such that
    \begin{equation}
        \label{eq:Derivation}
	D \circ \mu_\algebra{A}
        =
        \lambda \circ (\id \tensor D) + \rho \circ (D \tensor \id)
    \end{equation}
    holds, where $\rho$ and $\lambda$ denote the right and left
    $\algebra{A}$-multiplications of $\module{M}$, respectively.  The
    set of derivations will be denoted by
    $\Der(\algebra{A},\module{M})$. If $\module{M} = \algebra{A}$ we
    write $\Der(\algebra{A})$.
\end{definition}

We can arrange the coisotropic derivations as a coisotropic submodule
of the internal homomorphism
$\CoisoHom_\coisoField{k}(\algebra{A},\module{M})$ as follows.
\begin{proposition}
    \label{proposition:CosioDer}%
    Let $\module{M} \in \CoisoBimodTriple(\algebra{A},\algebra{A})$ be
    a coisotropic $\algebra{A}$-bimodule.  Then
    \begin{align}
	\CoisoDer(\algebra{A},\module{M})_\Total
	&:=
        \Der(\algebra{A}_\Total, \module{M}_\Total),
        \\
	\CoisoDer(\algebra{A},\module{M})_\Wobs
	&:=
        \big\{
        (D_\Total,D_\Wobs) \in \Hom_\coisoField{k}(\algebra{A},\module{M})
	\; \big| \;
        D_\Total \in \Der(\algebra{A}_\Total,\module{M}_\Total),
        D_\Wobs \in \Der(\algebra{A}_\Wobs, \module{M}_\Wobs)
        \big\},
        \\
	\CoisoDer(\algebra{A},\module{M})_\Null
	&:=
        \big\{
        (D_\Total, D_\Wobs ) \in \Der(\algebra{A},\module{M})_\Wobs
	\; \big| \;
        D_\Wobs(\algebra{A}_\Wobs) \subseteq \module{M}_\Null
        \big\}
    \end{align}
    defines a coisotropic $\field{k}$-module
    $\CoisoDer(\algebra{A},\module{M})$.
\end{proposition}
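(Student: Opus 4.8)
The plan is to realise $\CoisoDer(\algebra{A},\module{M})$ as a coisotropic submodule of the internal Hom $\CoisoHom_\coisoField{k}(\algebra{A},\module{M})$, which is already known to be a coisotropic $\field{k}$-module. Comparing the three proposed components with those of the internal Hom, one sees that $\CoisoDer(\algebra{A},\module{M})_\Total$, $\CoisoDer(\algebra{A},\module{M})_\Wobs$ and $\CoisoDer(\algebra{A},\module{M})_\Null$ are exactly the subsets cut out of $\Hom_\field{k}(\algebra{A}_\Total,\module{M}_\Total)$, $\Hom_\coisoField{k}(\algebra{A},\module{M})$ and the corresponding $\Null$-component by the requirement that the relevant total and $\Wobs$ maps satisfy the Leibniz identity \eqref{eq:Derivation}. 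It therefore suffices to verify the three defining properties of \autoref{definition:CoisoTriplesModules}: that each component is a $\field{k}$-submodule, that the structure map $\iota$ restricts correctly, and that $\CoisoDer(\algebra{A},\module{M})_\Null \subseteq \CoisoDer(\algebra{A},\module{M})_\Wobs$.

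First I would check the submodule property. The Leibniz identity \eqref{eq:Derivation} is $\field{k}$-linear in $D$, so any $\field{k}$-linear combination of derivations is again a derivation; hence $\CoisoDer(\algebra{A},\module{M})_\Total = \Der(\algebra{A}_\Total,\module{M}_\Total)$ is a submodule of $\Hom_\field{k}(\algebra{A}_\Total,\module{M}_\Total)$. For the $\Wobs$-component, the two conditions defining a morphism of coisotropic modules, namely $D_\Total\circ\iota_\algebra{A}=\iota_\module{M}\circ D_\Wobs$ and $D_\Wobs(\algebra{A}_\Null)\subseteq\module{M}_\Null$, together with the two Leibniz conditions on $D_\Total$ and $D_\Wobs$, are all $\field{k}$-linear, so $\CoisoDer(\algebra{A},\module{M})_\Wobs$ is a submodule of $\Hom_\coisoField{k}(\algebra{A},\module{M})$. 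The additional condition $D_\Wobs(\algebra{A}_\Wobs)\subseteq\module{M}_\Null$ defining $\CoisoDer(\algebra{A},\module{M})_\Null$ is likewise linear, so that component is a submodule as well; in fact it is precisely the intersection of $\CoisoDer(\algebra{A},\module{M})_\Wobs$ with the $\Null$-part of the internal Hom.

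Next I would address the structure map. The map $\iota$ of $\CoisoHom_\coisoField{k}(\algebra{A},\module{M})$ is the projection $(D_\Total,D_\Wobs)\mapsto D_\Total$ onto the first component. By the very definition of $\CoisoDer(\algebra{A},\module{M})_\Wobs$, the image of such a pair under this projection lies in $\Der(\algebra{A}_\Total,\module{M}_\Total)=\CoisoDer(\algebra{A},\module{M})_\Total$, so $\iota$ restricts to a $\field{k}$-module homomorphism $\CoisoDer(\algebra{A},\module{M})_\Wobs\to\CoisoDer(\algebra{A},\module{M})_\Total$. Finally, $\CoisoDer(\algebra{A},\module{M})_\Null\subseteq\CoisoDer(\algebra{A},\module{M})_\Wobs$ holds by construction, since the $\Null$-component was defined as a subset of $\Der(\algebra{A},\module{M})_\Wobs$. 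Together these establish all three requirements.

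The computations are routine consequences of the $\field{k}$-linearity of the Leibniz rule and of the coisotropic-morphism conditions, so I do not expect a genuine obstacle. The only point meriting care is ensuring that the restricted structure map $\iota$ lands in $\CoisoDer(\algebra{A},\module{M})_\Total$ rather than merely in $\Hom_\field{k}(\algebra{A}_\Total,\module{M}_\Total)$; this is exactly why the clause $D_\Total\in\Der(\algebra{A}_\Total,\module{M}_\Total)$ is built into the definition of the $\Wobs$-component, and it is what makes the triple a genuine coisotropic module rather than just a triple of $\field{k}$-modules.
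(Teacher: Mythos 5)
Your proof is correct and follows exactly the route the paper intends: the paper gives no explicit proof for this proposition, treating it as a straightforward verification, and later (in the example on endomorphisms and derivations) explicitly describes $\CoisoDer$ as a coisotropic submodule of the internal Hom, which is precisely your strategy. The linearity of the Leibniz rule, the restriction of the projection $\iota$, and the inclusion of the $\Null$-component are all the checks needed, and you have identified them correctly.
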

One needs to be careful with the notation here since
$\Der(\algebra{A})$ has different meanings depending whether
$\algebra{A}$ is a coisotropic or a classical algebra.  Note also that
$\CoisoDer(\algebra{A},\module{M})_\Wobs =
\Der(\algebra{A},\module{M})$ is just the set of derivations of a
coisotropic algebra $\algebra{A}$ with values in the coisotropic
module $\module{M}$ as given in \autoref{def:CoisotropicDerivation}.
The coisotropic $\field{k}$-module of derivations on $\algebra{A}$
with values in $\algebra{A}$ is denoted by $\CoisoDer(\algebra{A})$.

As for usual algebras the derivations turn out to be a bimodule if the
algebra is commutative:
\begin{proposition}[$\algebra{A}$-module of derivations]
    \label{proposition:DerivationsAsAModule}%
    Let $\algebra{A} \in \CoisoAlgTriple_{\field{k}}$ be a commutative
    coisotropic algebra.  Then $\CoisoDer(\algebra{A})$ is a
    coisotropic $\algebra{A}$-bimodule.
\end{proposition}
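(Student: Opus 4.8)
The plan is to equip each component of $\CoisoDer(\algebra{A})$ with the classical module structure coming from pointwise multiplication and to verify all the compatibilities required by \autoref{definition:CoisoBimodules}; commutativity of $\algebra{A}$ enters precisely where one needs the scaled maps to remain derivations. On the total component $\CoisoDer(\algebra{A})_\Total = \Der(\algebra{A}_\Total)$ I would set $(a \cdot X)(b) := a \, X(b)$ for $a, b \in \algebra{A}_\Total$ and $X \in \Der(\algebra{A}_\Total)$. A direct computation shows that $a \cdot X$ is again a derivation, the commutativity of $\algebra{A}_\Total$ being exactly what is needed for the Leibniz rule to survive; left and right multiplication coincide, so $\Der(\algebra{A}_\Total)$ becomes a symmetric $\algebra{A}_\Total$-bimodule in the usual way. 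On the $\Wobs$-component I would define, for $a \in \algebra{A}_\Wobs$ and $(X_\Total, X_\Wobs) \in \CoisoDer(\algebra{A})_\Wobs$,
\begin{equation*}
    a \cdot (X_\Total, X_\Wobs)
    :=
    \big( \iota_\algebra{A}(a) \cdot X_\Total, \; a \cdot X_\Wobs \big),
\end{equation*}
using the scalar $\iota_\algebra{A}(a)$ on the first slot.

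The key step is to check that this pair again lies in $\CoisoDer(\algebra{A})_\Wobs$, i.e. that it is a morphism of coisotropic modules whose two components are derivations. Both components are derivations by the previous paragraph applied to $\algebra{A}_\Total$ and $\algebra{A}_\Wobs$, respectively. For the intertwining relation $\iota_\algebra{A} \circ (a \cdot X_\Wobs) = (\iota_\algebra{A}(a) \cdot X_\Total) \circ \iota_\algebra{A}$ I would evaluate both sides on $x \in \algebra{A}_\Wobs$ and use that $(X_\Total, X_\Wobs)$ is itself a coisotropic morphism, together with the fact that $\iota_\algebra{A}$ is an algebra homomorphism; this is precisely where the identity $\iota_\algebra{A}(a \cdot X_\Wobs(x)) = \iota_\algebra{A}(a) \cdot \iota_\algebra{A}(X_\Wobs(x))$ is invoked. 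This is the main, though still routine, obstacle, as it is the only place where the levels $\Total$ and $\Wobs$ must be matched against each other. With this established, the structure map of $\CoisoDer(\algebra{A})$, which by \autoref{proposition:CosioDer} is the projection onto the total component, is visibly a bimodule morphism along $\iota_\algebra{A}$.

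Finally I would verify that $\CoisoDer(\algebra{A})_\Null$ is a sub-bimodule and that the ideal conditions \eqref{eq:BimoduleTripleCondition} hold. All of these reduce to the single observation that $\algebra{A}_\Null$ is a two-sided ideal of $\algebra{A}_\Wobs$. Indeed, if $X_\Wobs(\algebra{A}_\Wobs) \subseteq \algebra{A}_\Null$ then $(a \cdot X_\Wobs)(\algebra{A}_\Wobs) = a \cdot X_\Wobs(\algebra{A}_\Wobs) \subseteq \algebra{A}_\Null$ for any $a \in \algebra{A}_\Wobs$, which gives the sub-bimodule property; and if instead $a \in \algebra{A}_\Null$, then for arbitrary $(X_\Total, X_\Wobs) \in \CoisoDer(\algebra{A})_\Wobs$ and $x \in \algebra{A}_\Wobs$ one has $(a \cdot X_\Wobs)(x) = a \cdot X_\Wobs(x) \in \algebra{A}_\Null$, so that $a \cdot (X_\Total, X_\Wobs) \in \CoisoDer(\algebra{A})_\Null$. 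By commutativity the right action obeys the symmetric estimate, establishing \eqref{eq:BimoduleTripleCondition}. Assembling these verifications shows that $\CoisoDer(\algebra{A})$ is a coisotropic $\algebra{A}$-bimodule.
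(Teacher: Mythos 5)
The paper states this proposition without proof, so there is no argument of the authors' to compare against; your verification is correct and is the standard argument one would expect, with commutativity entering exactly where you say it does (to keep $a \cdot X$ a derivation) and the ideal property of $\algebra{A}_\Null$ handling the $\Null$-component and the conditions \eqref{eq:BimoduleTripleCondition}. The only point you leave implicit is that $a \cdot X_\Wobs$ must also satisfy $(a \cdot X_\Wobs)(\algebra{A}_\Null) \subseteq \algebra{A}_\Null$ in order for the pair to lie in $\Hom_\coisoField{k}(\algebra{A},\algebra{A})$ and hence in $\CoisoDer(\algebra{A})_\Wobs$, but this follows from the same observation you already invoke, namely that $\algebra{A}_\Null$ is a two-sided ideal in $\algebra{A}_\Wobs$.
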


Every $(D_\Total, D_\Wobs) \in \Der(\algebra{A})_\Wobs$ defines a
derivation on
$\algebra{A}_\red = \algebra{A}_\Wobs / \algebra{A}_\Null$ since the
condition $D_\Wobs(\algebra{A}_\Null) \subseteq \algebra{A}_\Null$ is
automatically satisfied.  Hence we have a $\field{k}$-linear map
$\Der(\algebra{A})_\Wobs \to \Der(\algebra{A}_\red)$.  The kernel of
this linear map is exactly given by $\CoisoDer(\algebra{A})_\Null$,
thus there exists an injective module homomorphism
\begin{equation}
    \label{eq:ReducedDerivation}
    \CoisoDer(\algebra{A})_\red \hookrightarrow \Der(\algebra{A}_\red).
\end{equation}
This is simply the restriction of the canonical injective morphism
$\CoisoHom_\coisoField{k}(\algebra{A},\algebra{A})_\red \to
\Hom_\field{k}(\algebra{A}_\red,\algebra{A}_\red)$
from \autoref{rmk:ReductionOfInternalHom}
to the submodule $\CoisoDer(\algebra{A})$.
\begin{example}
    Our notion of a coisotropic algebra generalizes and unifies
    previous notions used in noncommutative geometry referring to
    features of the derivations:
    \begin{examplelist}
    \item \label{item:SubmanifoldAlgebra} A \emph{submanifold algebra}
        in the sense of \cite{masson:1996a} and \cite{dandrea:2019a}
        can equivalently be described as a coisotropic algebra
        $\algebra{A}$ with $\algebra{A}_\Total = \algebra{A}_\Wobs$
        such that the canonical module morphism
        \eqref{eq:ReducedDerivation} is an isomorphism.
    \item \label{item:QuotientManifoldAlgebra} A \emph{quotient
          manifold algebra} in the sense of \cite{masson:1996a} can
        equivalently be described as a coisotropic algebra
        $\algebra{A}$ with
        $\algebra{A}_\Wobs \subseteq \algebra{A}_\Total$ a subalgebra
        and $\algebra{A}_\Null = 0$ such that
        $\Center(\algebra{A}_\red) \simeq \Center(\algebra{A})_\red$,
        $\Der(\algebra{A}_\red) \simeq \CoisoDer(\algebra{A})_\red$
        via \eqref{eq:ReducedDerivation} and
        $\algebra{A}_\Wobs = \{ a \in \algebra{A}_\Total \mid \textrm{
          for all } (D_\Total,D_\Wobs) \in
        \CoisoDer(\algebra{A})_\Null \textrm{ one has } D_\Total(a) =
        0 \}$ holds.  Here $\Center(\algebra{A})$ denotes the
        coisotropic center of the coisotropic algebra $\algebra{A}$,
        see \autoref{prop:ZerothFirstHochschild},
        \ref{prop:ZerothFirstHochschild_1} for the definition.
    \end{examplelist}
\end{example}

We can also define inner derivations by requiring the existence of
appropriate elements in each component.
\begin{proposition}
    \label{proposition:InnerDerivations}%
    Let $\algebra{A} \in \CoisoAlgTriple_{\coisoField{k}}$ be a
    coisotropic algebra.  Then
    \begin{align}
	\CoisoInnDer(\algebra{A})_\Total
	&:=
        \InnDer(\algebra{A}_\Total), \\
	\CoisoInnDer(\algebra{A})_\Wobs
	&:=
        \big\{
        (D_\Total,D_\Wobs) \in \CoisoDer(\algebra{A})_\Wobs
	\bigm|
        \exists a \in \algebra{A}_\Wobs : D_\Wobs = [\argument, a]_\Wobs
	\textrm{ and }
        D_\Total = [\argument, \iota_\algebra{A}(a)]_\Total
        \big\}, \\
	\CoisoInnDer(\algebra{A})_\Null
	&:=
        \big\{
        (D_\Total,D_\Wobs) \in \CoisoDer(\algebra{A})_\Null
	\bigm|
        \exists a \in \algebra{A}_\Wobs : D_\Wobs = [\argument, a]_\Wobs
	\textrm{ and }
        D_\Total = [\argument, \iota_\algebra{A}(a)]_\Total
        \big\}
    \end{align}
    defines a coisotropic $\field{k}$-module
    $\CoisoInnDer(\algebra{A})$.
\end{proposition}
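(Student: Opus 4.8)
The plan is to exhibit $\CoisoInnDer(\algebra{A})$ as a coisotropic submodule of the coisotropic module $\CoisoDer(\algebra{A})$ obtained in \autoref{proposition:CosioDer}; once this is done, the triple is automatically a coisotropic $\field{k}$-module, the map $\iota$ being the projection $(D_\Total, D_\Wobs) \mapsto D_\Total$ inherited from $\CoisoDer(\algebra{A})$. Since each of the three prescribed components is by construction a subset of the corresponding component of $\CoisoDer(\algebra{A})$, and since the inclusions $\CoisoInnDer(\algebra{A})_\Null \subseteq \CoisoInnDer(\algebra{A})_\Wobs$ and $\CoisoInnDer(\algebra{A})_\Total = \InnDer(\algebra{A}_\Total) \subseteq \Der(\algebra{A}_\Total)$ are evident from the defining formulas, the only real content is to check that each component is closed under addition and under the $\field{k}$-action.

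For the $\Total$-component this is the classical fact that $\InnDer(\algebra{A}_\Total)$ is a $\field{k}$-submodule of $\Der(\algebra{A}_\Total)$, which follows from the $\field{k}$-bilinearity of the commutator via $[\argument, a]_\Total + [\argument, b]_\Total = [\argument, a+b]_\Total$ and $\lambda \cdot [\argument, a]_\Total = [\argument, \lambda a]_\Total$. For the $\Wobs$- and $\Null$-components I would argue as follows: given two elements $(D_\Total, D_\Wobs)$ and $(D'_\Total, D'_\Wobs)$ witnessed by $a, a' \in \algebra{A}_\Wobs$, their sum already lies in $\CoisoDer(\algebra{A})_\Wobs$ (resp. $\CoisoDer(\algebra{A})_\Null$), since that component is a submodule by \autoref{proposition:CosioDer}, and it is witnessed by the single element $a + a' \in \algebra{A}_\Wobs$. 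Indeed $D_\Wobs + D'_\Wobs = [\argument, a+a']_\Wobs$, and, crucially, since $\iota_\algebra{A}$ is in particular $\field{k}$-linear, $D_\Total + D'_\Total = [\argument, \iota_\algebra{A}(a)]_\Total + [\argument, \iota_\algebra{A}(a')]_\Total = [\argument, \iota_\algebra{A}(a + a')]_\Total$. The analogous computation with $\lambda a$ handles the $\field{k}$-action, and the zero witness shows the trivial derivation lies in each component.

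The step that requires the most care, and which is really the heart of the statement, is that a single witness $a$ must simultaneously produce the $\Wobs$-derivation $[\argument, a]_\Wobs$ and the $\Total$-derivation $[\argument, \iota_\algebra{A}(a)]_\Total$; one must therefore verify that the module operations preserve this coupling rather than merely preserving innerness componentwise. This coupling survives precisely because $\iota_\algebra{A}$ is an algebra homomorphism, hence $\field{k}$-linear, so that the witnesses add and scale compatibly in the two components. Finally, the restriction of the projection $(D_\Total, D_\Wobs) \mapsto D_\Total$ sends $\CoisoInnDer(\algebra{A})_\Wobs$ into $\InnDer(\algebra{A}_\Total) = \CoisoInnDer(\algebra{A})_\Total$, since the defining condition forces $D_\Total = [\argument, \iota_\algebra{A}(a)]_\Total$ to be inner; thus $\iota$ restricts to a module homomorphism and $\CoisoInnDer(\algebra{A}) \subseteq \CoisoDer(\algebra{A})$ is a coisotropic submodule, which completes the argument.
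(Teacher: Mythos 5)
Your proposal is correct: the paper states this proposition without proof, treating it as a routine verification, and your argument is exactly the check that is implicitly intended, namely exhibiting $\CoisoInnDer(\algebra{A})$ as a coisotropic submodule of $\CoisoDer(\algebra{A})$ from \autoref{proposition:CosioDer}. You correctly identify the only point with any content --- that addition and the $\field{k}$-action must preserve the \emph{coupled} witness $a$ producing both $[\argument,a]_\Wobs$ and $[\argument,\iota_\algebra{A}(a)]_\Total$ simultaneously, which works because $\iota_\algebra{A}$ is $\field{k}$-linear --- so nothing is missing.
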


%
%

\subsection{Coisotropic Homological Algebra}
\label{sec:CoisotropicHomologicalAlgebra}

We collect some definitions and statements about (cochain) complexes
of coisotropic modules.  Most of this can be done as in every abelian
category.  But since $\CoisoModTriple_\coisoField{k}$ is not abelian
we have to be careful when defining coisotropic cohomology, since we
have two different notions of images, see
\autoref{sec:CoisotropicModules}, \ref{CoisoImage} and
\ref{CoisoCoimage}.

\begin{definition}[Graded coisotropic module]
    \label{definition:GradedStuff}%
    Let $\field{k}$ be a commutative unital ring.
    \begin{definitionlist}
    \item \label{item:GradedCoisoModule} A \emph{($\Integers$-)graded
          coisotropic module} is a $\Integers$-indexed family
        $\{\module{M}^i\}_{i\in \Integers}$ of coisotropic modules
        $\module{M}^i \in \CoisoModTriple_\coisoField{k}$.
    \item \label{item:MorphismGradedCoiso} A \emph{morphism
        $\{\module{M}^i\}_{i \in \Integers} \longrightarrow
        \{\module{N}^i\}_{i \in \Integers}$ of graded coisotropic
        modules} is given by a $\Integers$-indexed family
        $\{\Phi^i\}_{i \in \Integers}$ of morphisms
        $\Phi^i \colon \module{M}^i \longrightarrow \module{N}^i$.
    \item \label{item:GradedCoisoCat} We denote the \emph{category of
          graded coisotropic modules} by
        $\CoisoModTriple_\coisoField{k}^\bullet$.
    \end{definitionlist}
\end{definition}

We combine the indexed family of a graded coisotropic module into a
single coisotropic module
$\module{M}^\bullet = \bigoplus_{i \in \Integers} \module{M}^i$.
Conversely, if a given coisotropic module $\module{M}$ decomposes into
a direct sum indexed by $\Integers$ we write $\module{M}^\bullet$ if we want to
emphasize the graded structure. This way, every
coisotropic module can be viewed as a graded coisotropic module by
placing it at $i=0$ with all other components being trivial.

A more flexible notion of morphism between graded coisotropic modules
is given by a morphism of degree $k$, i.e.  a family
$\Phi^i \colon \module{M}^i \longrightarrow \module{N}^{i+k}$.

We will use the usual tensor product
\begin{equation}
    \label{eq:GradedTensorProduct}
    \module{M} \tensor \module{N}
    =
    \bigoplus_{n \in \mathbb{Z}}
    \Big(\bigoplus_{k + \ell = n} \module{M}^k \tensor \module{N}^\ell \Big),
\end{equation}
and the symmetry with the usual Koszul signs.

\begin{definition}[Coisotropic complex]
    \label{definition:CoisoComplex}%
    Let $\field{k}$ be a commutative unital ring.
    \begin{definitionlist}
    \item \label{item:CoisoChainComplex} A \emph{coisotropic (cochain)
          complex} is a graded coisotropic module $\module{M}^\bullet$
        together with a degree $+1$ map
        $\delta^\bullet \colon \module{M}^\bullet \longrightarrow
        \module{M}^{\bullet+1}$ such that $\delta \circ \delta = 0$.
    \item \label{item:CoisoChainMorphism} A \emph{morphism of coisotropic
        complexes} is a morphism
        $\Phi \colon \module{M}^\bullet \longrightarrow
        \module{N}^\bullet$ such that
        $\Phi \circ \delta_\module{M} = \delta_\module{N} \circ \Phi$.
    \item \label{item:CoisoComplexCat} The \emph{category of coisotropic
        complexes} is denoted by
        $\CoChains(\CoisoModTriple_\coisoField{k})$.
    \end{definitionlist}
\end{definition}

Since morphisms of cochain complexes commute with the differential
$\delta$, it is easy to see that we obtain a new functor by
constructing the cohomology of the coisotropic complex.
\begin{proposition}[Coisotropic cohomology]
    \label{proposition:CoisoCohomology}%
    Let
    $\module{M}^\bullet \in \CoChains(\CoisoModTriple_\coisoField{k})$
    be a coisotropic cochain complex with differential $\delta$.  The
    maps
    \begin{align}
	\module{M}^i
        \longmapsto
        \functor{H}^i(\module{M}, \delta)
        =
        \ker \delta^i / \image \delta^{i-1}
    \end{align}
    for $i \in \Integers$ define a functor
    $\functor{H} \colon \CoChains(\CoisoModTriple_\coisoField{k})
    \longrightarrow \CoisoModTriple_\coisoField{k}^\bullet$.
\end{proposition}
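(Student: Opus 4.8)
The plan is to verify, degree by degree and component by component, that the proposed assignment yields a well-defined coisotropic module and that this assignment is functorial. Since $\CoisoModTriple_\coisoField{k}$ is not abelian and the statement commits to the image $\image$ rather than the regular image $\regimage$, the only genuinely delicate point will be the $\Null$-component; the $\Total$- and $\Wobs$-components reduce to ordinary homological algebra over $\field{k}$.

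First I would check that $\image \delta^{i-1}$ is a coisotropic submodule of $\ker \delta^i$, so that the explicit quotient of \autoref{sec:CoisotropicModules}, \ref{Quotient} applies. At the $\Total$- and $\Wobs$-levels this is immediate from $\delta \circ \delta = 0$, giving $\image(\delta^{i-1}_\Total) \subseteq \ker(\delta^i_\Total)$ and $\image(\delta^{i-1}_\Wobs) \subseteq \ker(\delta^i_\Wobs)$. For the $\Null$-component I would combine the two defining features of a coisotropic complex: by \ref{CoisoImage} one has $(\image \delta^{i-1})_\Null = \image\big(\delta^{i-1}_\Wobs\at{\module{M}^{i-1}_\Null}\big)$, while by \ref{Kernel} one has $(\ker \delta^i)_\Null = \ker(\delta^i_\Wobs) \cap \module{M}^i_\Null$. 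Because $\delta^{i-1}$ is in particular a morphism of coisotropic modules, it satisfies $\delta^{i-1}_\Wobs(\module{M}^{i-1}_\Null) \subseteq \module{M}^i_\Null$, and because $\delta^i \circ \delta^{i-1} = 0$ it also satisfies $\delta^{i-1}_\Wobs(\module{M}^{i-1}_\Null) \subseteq \ker(\delta^i_\Wobs)$. These two inclusions combine to give precisely $(\image \delta^{i-1})_\Null \subseteq (\ker \delta^i)_\Null$, which is exactly what the submodule condition requires.

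With this in hand I would read off the three components of $\functor{H}^i(\module{M},\delta) = \ker \delta^i / \image \delta^{i-1}$ from the quotient formula: the $\Total$- and $\Wobs$-components are the ordinary cohomologies $\ker(\delta^i_\Total)/\image(\delta^{i-1}_\Total)$ and $\ker(\delta^i_\Wobs)/\image(\delta^{i-1}_\Wobs)$, and the $\Null$-component is the image of $\ker(\delta^i_\Wobs) \cap \module{M}^i_\Null$ inside the $\Wobs$-cohomology, the structure map being induced by $\iota_{\module{M}^i}$. This is a coisotropic module by construction, so the object part of $\functor{H}$ is defined, and collecting these over all $i \in \Integers$ gives an object of $\CoisoModTriple_\coisoField{k}^\bullet$.

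For functoriality, given a morphism $\Phi$ of coisotropic complexes, I would use $\Phi \circ \delta_\module{M} = \delta_\module{N} \circ \Phi$ to see that each of $\Phi^i_\Total$ and $\Phi^i_\Wobs$ maps cocycles to cocycles and coboundaries to coboundaries, hence descends to the quotients; the morphism condition $\Phi^i_\Wobs(\module{M}^i_\Null) \subseteq \module{N}^i_\Null$ and the compatibility with $\iota$ then guarantee that the induced maps respect both the $\Null$-components and the structure maps, so $\functor{H}^i(\Phi)$ is again a morphism of coisotropic modules. Preservation of identities and of composition is inherited directly from the corresponding facts for module homomorphisms at the $\Total$- and $\Wobs$-levels, since the induced maps on quotients are uniquely determined by representatives. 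The step I expect to be the main obstacle, and the one deserving the most care, is the $\Null$-component: it is exactly the compatibility of the coisotropic morphism structure with $\Null$ (and not any abelian-category machinery) that makes both the submodule inclusion of the second paragraph and the descent in this last step go through, which is why the choice of $\image$ over $\regimage$ is essential here.
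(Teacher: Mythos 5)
Your verification is correct and is exactly the routine check the paper leaves implicit (the paper offers no written proof beyond the remark that morphisms of complexes commute with $\delta$): the key points — that $\image\delta^{i-1}$ is a coisotropic submodule of $\ker\delta^i$ because $\delta^{i-1}_\Wobs$ preserves the $\Null$-components and $\delta\circ\delta=0$, and that induced maps on quotients respect the $\Null$-components — are all present and correctly argued. One small inaccuracy in your closing commentary: the claim that the choice of $\image$ over $\regimage$ is \emph{essential} contradicts the paper's own \autoref{remark:RegularImageAgain}, which points out that the two choices give the same cohomology because the $\Null$-component of the denominator is not used in the quotient of coisotropic modules; this does not affect the validity of your proof, which works with $\image$ as stated.
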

\begin{remark}[(Regular) image]
    \label{remark:RegularImageAgain}%
    Note that the coisotropic cohomology is defined by using the
    \emph{image} of morphisms of coisotropic modules and not the
    \emph{regular image}.  However, choosing the regular image instead
    would not make a difference since the $\NULL$-component of the
    denominator is not used in the quotient of coisotropic modules,
    see \eqref{eq:QuotientsCoisotropicModules}.  Moreover, note that
    in general we can not decide whether $\ker \delta = \image \delta$
    by computing cohomology, but we can decide if
    $\ker \delta = \regimage \delta$ holds.
\end{remark}

Since graded coisotropic modules and coisotropic complexes are given
by $\Integers$-indexed families of coisotropic modules it should be
clear that applying the reduction functor in every degree yields
functors
$\red \colon \CoisoModTriple_\coisoField{k}^\bullet \to
\Modules_\field{k}^\bullet$ and
$\red \colon \CoChains(\CoisoModTriple_\coisoField{k}) \to
\CoChains(\Modules_\field{k})$.
It is now natural to investigate the relation between the cohomology
functor and the reduction functor. The following proposition shows
that reduction and cohomology functors commute.
\begin{proposition}[Cohomology commutes with reduction]
    \label{prop:CohomologyCommutesWithReduction}%
    There exists a natural isomorphism
    $\eta \colon \red \circ \functor{H} \Longrightarrow \functor{H}
    \circ \red$, i.e.
    \begin{equation}
	\begin{tikzcd}
            \CoChains(\CoisoModTriple_\coisoField{k})
            \arrow{r}{\functor{H}}
            \arrow{d}[swap]{\red}
            & \CoisoModTriple_\coisoField{k}
            \arrow{d}{\red}
            \arrow[Rightarrow]{dl}[swap]{\eta}\\
            \CoChains(\Modules_\field{k})
            \arrow{r}{\functor{H}}
            &\Modules_\field{k}
	\end{tikzcd}
    \end{equation}
    commutes.
\end{proposition}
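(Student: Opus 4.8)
The plan is to construct the natural isomorphism $\eta$ componentwise, reducing the claim to a purely module-theoretic fact about how the quotient $\module{E}_\red = \module{E}_\Wobs/\module{E}_\Null$ interacts with kernels and images. Let me fix a coisotropic complex $(\module{M}^\bullet,\delta)$ and unwind both composites at a fixed degree $i$.

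First I would compute $(\functor{H}^i(\module{M},\delta))_\red$. By \autoref{proposition:CoisoCohomology} the cohomology coisotropic module $\functor{H}^i = \ker\delta^i/\image\delta^{i-1}$ has, by the kernel formula in \ref{Kernel} and the quotient formula \eqref{eq:QuotientsCoisotropicModules}, the $\WOBS$-component $\ker\delta^i_\Wobs/\image\delta^{i-1}_\Wobs$ and the $\NULL$-component $(\ker\delta^i_\Wobs\cap\module{M}^i_\Null)/\image\delta^{i-1}_\Wobs$. Applying $\red$ gives the quotient of the former by the latter, which upon simplification is
\begin{equation}
    \frac{\ker\delta^i_\Wobs}{\image\delta^{i-1}_\Wobs + (\ker\delta^i_\Wobs\cap\module{M}^i_\Null)}.
\end{equation}
On the other side, $\functor{H}^i(\red\,\module{M})$ is the cohomology of the reduced complex $\module{M}^i_\red = \module{M}^i_\Wobs/\module{M}^i_\Null$ with induced differential $\bar\delta^i$; this is $\ker\bar\delta^i/\image\bar\delta^{i-1}$, a subquotient of $\module{M}^i_\Wobs/\module{M}^i_\Null$.

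The central step is then to produce the comparison map. The obvious candidate sends the class of $x\in\ker\delta^i_\Wobs$ in the first expression to the class of $x+\module{M}^i_\Null$ in the second; one checks $\delta^i x=0$ in $\module{M}^i_\Wobs$ forces $\bar\delta^i[x]=0$, and that elements of $\image\delta^{i-1}_\Wobs$ and of $\ker\delta^i_\Wobs\cap\module{M}^i_\Null$ both map to zero, so $\eta^i_{\module{M}}$ is well-defined. Surjectivity is immediate since any $\bar\delta^i$-cocycle lifts to some $x\in\module{M}^i_\Wobs$ with $\delta^i x\in\module{M}^{i+1}_\Null$; I would note this suffices because $\image\bar\delta^{i-1}$ absorbs precisely the ambiguity, though one must verify that such a lift can be corrected to land in $\ker\delta^i_\Wobs$ modulo $\module{M}^i_\Null$. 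Injectivity amounts to: if $x\in\ker\delta^i_\Wobs$ maps into $\image\bar\delta^{i-1}+\module{M}^i_\Null$, then $x$ already lies in $\image\delta^{i-1}_\Wobs + (\ker\delta^i_\Wobs\cap\module{M}^i_\Null)$, which is a short diagram chase. Naturality in the complex follows because every construction in sight is induced by the given chain map $\Phi_\Wobs$ on the $\WOBS$-components, so the relevant square commutes by functoriality of quotients.

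The step I expect to be the genuine obstacle is the surjectivity/well-definedness analysis at the level of the $\NULL$-component, precisely because $\CoisoModTriple_\coisoField{k}$ is \emph{not} abelian and the cohomology is defined via the (non-regular) \emph{image} rather than the regular image. Concretely, when lifting a reduced cocycle one only controls $\delta^i x$ up to $\module{M}^{i+1}_\Null$, not up to zero, and it is not a priori clear that the correction term can be chosen to respect the $\NULL$-filtration; here I would lean on \autoref{remark:RegularImageAgain}, which guarantees that the $\NULL$-component of the denominator is irrelevant in the quotient \eqref{eq:QuotientsCoisotropicModules}, so that the distinction between image and regular image does not obstruct the identification. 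Once that subtlety is handled the remaining verifications are routine, and assembling the $\eta^i_{\module{M}}$ over all $i$ and all $\module{M}$ yields the desired natural isomorphism.
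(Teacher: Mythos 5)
Your construction of $\eta$, and the verification that it is well defined and injective, are correct and follow the same route as the paper: both send $\big[[x]_\functor{H}\big]_\red$ to $\big[[x]_\red\big]_\functor{H}$, and your identification of $\functor{H}^i(\module{M})_\red$ with $\ker\delta^i_\Wobs\big/\big(\image\delta^{i-1}_\Wobs+(\ker\delta^i_\Wobs\cap\module{M}^i_\Null)\big)$ together with the short diagram chase for injectivity is sound. The genuine gap is exactly the step you flag, surjectivity, and your proposed resolution does not close it. A class in $\functor{H}^i(\module{M}_\red)$ is represented by some $x\in\module{M}^i_\Wobs$ with only $\delta^i_\Wobs x\in\module{M}^{i+1}_\Null$, and nothing in the axioms lets you correct $x$ by an element of $\module{M}^i_\Null$ plus a coboundary so as to land in $\ker\delta^i_\Wobs$. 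The appeal to \autoref{remark:RegularImageAgain} is a non sequitur here: that remark concerns the $\NULL$-component of the \emph{denominator} in the quotient \eqref{eq:QuotientsCoisotropicModules}, i.e.\ the irrelevance of image versus regular image of $\delta^{i-1}$, whereas the surjectivity problem lives in the numerator, in the discrepancy between $\ker\delta^i_\Wobs+\module{M}^i_\Null$ and the preimage $(\delta^i_\Wobs)^{-1}(\module{M}^{i+1}_\Null)$.

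That this discrepancy is real can be seen on the two-term complex $\module{M}^0=(\field{k},\field{k},0)$, $\module{M}^1=(\field{k},\field{k},\field{k})$, all structure maps and both components of $\delta^0$ equal to $\id$, all other terms zero; this satisfies every requirement of \autoref{definition:CoisoComplex}. Here $\ker\delta^0_\Wobs=0$, so $\functor{H}^0(\module{M})_\red=0$, while $\module{M}^0_\red=\field{k}$ and $\module{M}^1_\red=0$ give $\functor{H}^0(\module{M}_\red)=\field{k}$, so no surjection exists. Hence the step you singled out genuinely fails, and what your argument (and, in fact, the paper's) actually establishes is an \emph{injective} natural transformation $\red\circ\functor{H}\Longrightarrow\functor{H}\circ\red$, in line with \autoref{thm:defvsred} and \autoref{prop:HochschildVSReduction}. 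The paper's own proof is equally terse at this point: it asserts that the inverse $[[x]_\red]_\functor{H}\mapsto[[x]_\functor{H}]_\red$ is ``similarly'' well defined, but for a representative $x$ of a reduced cocycle the class $[x]_\functor{H}$ need not even be defined, for precisely the reason above. To get an isomorphism one needs an additional hypothesis, e.g.\ that every cocycle of $\module{M}^\bullet_\red$ lifts to a cocycle in $\module{M}^\bullet_\Wobs$; you should either add such a hypothesis or weaken the conclusion to injectivity.
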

\begin{proof}
    Define $\eta$ for every
    $\module{M} \in \CoChains(\CoisoModTriple_\coisoField{k})$
    by
    \begin{align*}
	\label{eq:prop:CohomologyCommutesWithReduction}
	\eta(\module{M}) \colon \functor{H}(\module{M})_\red
	\ni \big[[x]_\functor{H} \big]_\red
	\mapsto \big[ [x]_\red\big]_\functor{H}
	\in \functor{H}(\module{M}_\red).
    \end{align*}
    For $\delta_\Wobs^{i-1}y \in \image \delta_\Wobs^{i-1}$ we have
    $[\delta_\Wobs^{i-1}y ]_\red = \delta_\red^{i-1}[y]_\red$ and
    hence $[[\delta_\Wobs^{i-1}y]_\red]_\functor{H} = 0$.  Moreover,
    for $[x_0]_\functor{H} \in \functor{H}(\module{M})_\Null$
    we have $x_0 \in \module{M}^i_\Null$ and hence
    $[[x_0]_\red]_\functor{H} = 0$.  Thus $\eta$ is well-defined.
    Similarly, it can be shown that the inverse
    $\eta^{-1}(\module{M}) \colon
    \functor{H}(\module{M}\red) \longrightarrow
    \functor{H}(\module{M})_\red$ given by
    $[[x]_\red]_\functor{H} \mapsto [[x]_\functor{H}]_\red$ is
    well-defined.  Finally, for
    $\Phi \colon \module{M}^\bullet \longrightarrow
    \module{N}^\bullet$ we have
    \begin{align*}
	\Big(
        \eta(\module{N})
        \circ
        \big[[\Phi^i]_\functor{H}\big]_\red
        \Big)
	\big(\big[[x]_\functor{H}\big]_\red\big)
	&=
        \Big(\eta(\module{N}) \Big)
        \big(\big[[\Phi^i(x)]_\functor{H}\big]_\red\big)
        \\
	&=
        \big[[\Phi^i(x)]_\red\big]_\functor{H}
        \\
	&=
        \Big(
        \big[[\Phi^i]_\red\big]_\functor{H}
        \circ
        \eta(\module{M})
	\Big)
        \big(\big[[x]_\functor{H}\big]_\red\big),
    \end{align*}
    showing that $\eta$ is indeed a natural isomorphism.
\end{proof}
\begin{remark}
    \label{rem:Quis}%
    A morphism $\Phi \colon \module{M}^\bullet \to \module{N}^\bullet$
    of coisotropic cochain complexes is called a
    \emph{quasi-isomorphism} if the induced map $\functor{H}(\Phi)$ is
    an isomorphism of coisotropic modules. We remark that the
    reduction functor
    $\red \colon \CoChains(\CoisoModTriple_\coisoField{k}) \to
    \CoChains(\Modules_\field{k})$ maps quasi-isomorphisms of
    coisotropic cochain complexes to quasi-isomorphisms of cochain
    complexes.
\end{remark}

%
%

\section{Deformations via Coisotropic DGLAs}
\label{sec:CoisotropicDGLAs}

\subsection{Coisotropic DGLAs}

By a well-known principle of classical deformation theory, a
deformation problem is controlled by a certain differential graded Lie
algebra, see e.g. \cite{manetti:2009a}. Thus, the first step to
discuss the deformation theory of coisotropic algebras consists in
introducing a suitable notion of coisotropic differential graded Lie
algebra (DGLA) and a deformation functor in this realm.

\begin{definition}[Coisotropic differential graded Lie algebra]
    \label{definition:CoisoDGLA}%
    Let $\field{k}$ be a commutative unital ring.
    \begin{definitionlist}
    \item \label{item:CoisoDGLA} A \emph{coisotropic DGLA}
        $\liealg{g}$ over $\field{k}$ is a pair of DGLAs
        $(\liealg{g}_\Total^\bullet, [\argument,\argument]_\Total,
        \D_\Total)$ and
        $(\liealg{g}^\bullet_\Wobs,[\argument,\argument]_\Wobs,\D_\Wobs)$
        over $\field{k}$ together with a degree $0$ morphism
        $\iota_\liealg{g} \colon \liealg{g}_\Wobs^\bullet
        \longrightarrow \liealg{g}_\Total^\bullet$ of DGLAs and a
        graded Lie ideal
        $\liealg{g}_\Null^\bullet \subset \liealg{g}_\Wobs^\bullet$
        such that
        $\D_\Wobs (\liealg{g}_\Null^\bullet) \subseteq
        \liealg{g}_\Null^{\bullet +1}$.
    \item \label{item:CoisoDGLAMorph} For two coisotropic DGLAs
        $\liealg{g}$ and $\liealg{h}$, a \emph{morphism
          $\Phi \colon \liealg{g}^\bullet \longrightarrow
          \liealg{h}^\bullet$ of coisotropic DGLAs} is a pair of DGLA
        morphisms
        $\Phi_\Total \colon \liealg{g}^\bullet_\Total \to
        \liealg{h}^\bullet_\Total$ and
        $\Phi_\Wobs \colon \liealg{g}^\bullet_\Wobs \to
        \liealg{h}^\bullet_\Wobs$ such that
        $\Phi_\Total \circ \iota_\liealg{g} = \iota_\liealg{h} \circ
        \Phi_\Wobs$ and
        $\Phi_\Wobs(\liealg{g}^\bullet_\Null) \subseteq
        \liealg{h}^\bullet_\Null$.
    \item \label{item:CoisoDGLACat} The \emph{category of coisotropic
          DGLAs} will be denoted by $\CoisoDGLieAlgTriple$.
    \end{definitionlist}
\end{definition}

Note that a morphism of coisotropic DGLAs can equivalently be
understood as a morphism of coisotropic modules such that its
components are DGLA morphisms.  A coisotropic Lie algebra is a
coisotropic DGLA with trivial differential concentrated in degree $0$.
Similarly a coisotropic graded Lie algebra is a coisotropic DGLA with
trivial differential. Two important examples of coisotropic Lie
algebras are obtained as follows:
\begin{example}[Endomorphisms and derivations]
    \label{example:EndIsCoisoLie}%
    Let $\field{k}$ be a commutative unital ring.
    \begin{examplelist}
    \item \label{item:EndoAreLie} Let $\module{E}$ be a coisotropic
        $\field{k}$-module. The internal endomorphisms
        $\CoisoEnd_\coisoField{k}(\module{E})$ are a coisotropic Lie
        algebra given by the usual commutator
        $[\argument,\argument]^{\module{E}_\Total}$ on
        $\CoisoEnd_\coisoField{k}(\algebra{E})_\Total$ and the pair
        $([\argument, \argument]^{\module{E}_\Total}, [\argument,
        \argument]^{\module{E}_\Wobs})$ on
        $\CoisoEnd_\coisoField{k}(\algebra{E})_\Wobs$.
    \item \label{item:DerAreLie} Let $\algebra{A}$ be a coisotropic
        algebra over $\field{k}$. It is straightforward to see that
        $\CoisoDer(\algebra{A})$ is a coisotropic
        $\field{k}$-submodule of the coisotropic $\field{k}$-module
        $\CoisoEnd_\coisoField{k}(\algebra{A})$.  Moreover,
        $\CoisoDer(\algebra{A})$ is even a coisotropic Lie subalgebra
        of the coisotropic Lie algebra
        $\CoisoEnd_\coisoField{k}(\algebra{A})$. All canonical maps
        like \eqref{eq:ReducedDerivation} are in fact Lie morphisms.
    \end{examplelist}
\end{example}

Since every coisotropic DGLA $\liealg{g}$ is, in particular, a
coisotropic cochain complex we can always construct its corresponding
cohomology $\functor{H}(\liealg{g})$.  Moreover, every
morphism
$\Phi \colon \liealg{g}^\bullet \longrightarrow \liealg{h}^\bullet$ of
coisotropic DGLAs is a morphism of coisotropic cochain complexes and
therefore it induces a morphism
$\functor{H}(\Phi) \colon \functor{H}^\bullet(\liealg{g})
\longrightarrow \functor{H}^\bullet(\liealg{h})$ on cohomology.
Clearly, $\functor{H}(\liealg{g})$ is a coisotropic graded Lie
algebra and every induced morphism $\functor{H}(\Phi)$ is a morphism
of coisotropic graded Lie algebras.  If $\functor{H}(\Phi)$ is an
isomorphism we call $\Phi$ a \emph{coisotropic quasi-isomorphism}.
From \autoref{rem:Quis} it es clear that reduction of coisotropic
DGLAs preserves quasi-isomorphisms.

%
%

Following the standard way to define a deformation functor for a given
DGLA, we aim to define a Maurer-Cartan functor and to introduce a
notion of gauge equivalence. In order to define the Maurer-Cartan
elements in the coisotropic DGLA we first need an appropriate notion
of a \emph{coisotropic set}:
\begin{definition}[Coisotropic set]
    \label{definition:CoisoSet}%
    \
    \begin{definitionlist}
    \item \label{item:CoisoSet} A pair of sets $(M_\Total, M_\Wobs)$
        together with a map $\iota_M\colon M_\Wobs \to M_\Total$ and
        an equivalence relation $\sim$ on $M_\Wobs$ is called a
        \emph{coisotropic set}, denoted by
        $M = (M_\Total, M_\Wobs, \sim)$.
    \item \label{item:CoisoSetMorph} A \emph{morphism
          $f \colon M \to N$ of coisotropic sets} $M$ and $N$ is given
        by a pair of maps $f_\Total \colon M_\Total \to N_\Total$ and
        $f_\Wobs \colon M_\Wobs \to N_\Wobs$ such that
        $\iota_N \circ f_\Wobs = f_\Total \circ \iota_M$ and such that
        $f_\Wobs$ is compatible with the equivalence relations, i.e.
        $f(m) \sim_N f(m')$ for all $m,m' \in M$ with $m \sim_M m'$.
    \item \label{item:CoisoSetCat} The \emph{category of coisotropic
          sets} is denoted by $\CoisoSetTriple$.
    \end{definitionlist}
\end{definition}
\begin{remark}
    \label{remark:UnderlyingCoisoSets}%
    Every coisotropic $\field{k}$-module $\module{E}$, and hence every
    coisotropic algebra, coisotropic DGLA, etc., has an underlying
    coisotropic set in the sense that $\module{E}_\Wobs$ can be
    equipped with the equivalence relation induced by the submodule
    $\module{E}_\Null$.  In this sense coisotropic sets form the
    underlying structure for all the different notions of coisotropic
    algebraic structures.
\end{remark}

Given a coisotropic set we can clearly define a reduced one, as for
coisotropic modules, by taking the quotient
$M_\red = M / \mathord{\sim}$. This also yields a reduction functor
$\red \colon \CoisoSetTriple \to \Sets$.

We can now define the coisotropic set of Maurer-Cartan elements of a
coisotropic DGLA.  Recall that a Maurer-Cartan element in a DGLA
$\liealg{g}^\bullet$ is an element $\xi \in \liealg{g}^1$ satisfying the
Maurer-Cartan equation
\begin{equation}
	\D \xi + \frac{1}{2}[\xi, \xi] = 0.
\end{equation}
While up to here we did not have to make any further assumption about
the ring $\field{k}$ of scalars, from now on we assume
$\field{Q} \subseteq \field{k}$ in order to have a well-defined
Maurer-Cartan equation.
We denote by
$\MCset(\liealg{g})$ the set of all Maurer-Cartan elements of a DGLA.
\begin{definition}[Coisotropic set of Maurer-Cartan elements]
    \label{definition:CoisoMCElements}%
    Let $\liealg{g}$ be a coisotropic DGLA over a commutative
    unital ring $\field{k}$.  The \emph{coisotropic set
      $\MCset(\liealg{g})$ of Maurer-Cartan elements} of
    $\liealg{g}$ is given by
    \begin{equation}
        \label{eq:MCset}
	\MCset(\liealg{g})
        =
	\big(
        \MCset(\liealg{g}_\Total), \;
        \MCset(\liealg{g}_\Wobs), \;
        \sim_\MC
        \big),
    \end{equation}
    together with
    $\iota_\MC \colon \MCset(\liealg{g}_\Wobs) \longrightarrow
    \MCset(\liealg{g}_\Total)$ given by the map
    $\iota_\liealg{g} \colon \liealg{g}^\bullet_\Wobs \longrightarrow
    \liealg{g}^\bullet_\Total$ of $\liealg{g}$ and where the
    relation $\sim_\MC$ is defined by
    \begin{equation}
        \label{eq:MCEquivalence}
	\xi_1 \sim_\MC \xi_2 \iff \xi_1 - \xi_2 \in \liealg{g}_\Null^1
    \end{equation}
    for $\xi_1, \xi_2 \in \MCset(\liealg{g}_\Wobs)$.
\end{definition}
\begin{lemma}[Maurer-Cartan functor]
    \label{lem:MaurerCartanFunctor}%
    Mapping coisotropic DGLAs to their coisotropic sets of
    Maurer-Cartan elements defines a functor
    \begin{equation}
        \MCset\colon
        \CoisoDGLieAlgTriple \longrightarrow \CoisoSetTriple.
    \end{equation}
\end{lemma}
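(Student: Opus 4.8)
The plan is to verify the three defining properties of a functor, the whole thing resting on the single fact that a morphism of DGLAs sends Maurer--Cartan elements to Maurer--Cartan elements. First I would check that for a coisotropic DGLA $\liealg{g}$ the triple $\MCset(\liealg{g})$ of \autoref{definition:CoisoMCElements} really is an object of $\CoisoSetTriple$. For the structure map $\iota_\MC$ this amounts to showing that $\iota_\liealg{g}$ restricts to a map $\MCset(\liealg{g}_\Wobs) \to \MCset(\liealg{g}_\Total)$: since $\iota_\liealg{g}$ is a degree $0$ DGLA morphism it commutes with the differentials and respects the brackets, so for $\xi \in \liealg{g}_\Wobs^1$ one computes
\begin{equation*}
    \D_\Total \iota_\liealg{g}(\xi) + \frac{1}{2}[\iota_\liealg{g}(\xi),\iota_\liealg{g}(\xi)]_\Total
    =
    \iota_\liealg{g}\big(\D_\Wobs \xi + \frac{1}{2}[\xi,\xi]_\Wobs\big)
    =
    0,
\end{equation*}
so $\iota_\liealg{g}(\xi)$ is again Maurer--Cartan. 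That $\sim_\MC$ is an equivalence relation on $\MCset(\liealg{g}_\Wobs)$ follows at once from $\liealg{g}_\Null^1$ being an additive subgroup: reflexivity, symmetry and transitivity are precisely the facts that $0 \in \liealg{g}_\Null^1$, that $\liealg{g}_\Null^1$ is closed under negatives, and that it is closed under sums.

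For the morphism part I would, given $\Phi = (\Phi_\Total, \Phi_\Wobs) \colon \liealg{g} \to \liealg{h}$, set $\MCset(\Phi) := (\MCset(\Phi_\Total), \MCset(\Phi_\Wobs))$, where each component is the restriction of the corresponding DGLA morphism to the Maurer--Cartan locus. The same computation as above, now with $\Phi_\Total$ and $\Phi_\Wobs$ in place of $\iota_\liealg{g}$, shows these restrictions are well defined. Compatibility with the structure maps, $\iota_\MC \circ \MCset(\Phi_\Wobs) = \MCset(\Phi_\Total) \circ \iota_\MC$, is just the identity $\iota_\liealg{h} \circ \Phi_\Wobs = \Phi_\Total \circ \iota_\liealg{g}$ restricted to Maurer--Cartan elements. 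Compatibility with the equivalence relations uses the remaining condition $\Phi_\Wobs(\liealg{g}_\Null^\bullet) \subseteq \liealg{h}_\Null^\bullet$: if $\xi_1 \sim_\MC \xi_2$, i.e. $\xi_1 - \xi_2 \in \liealg{g}_\Null^1$, then $\Phi_\Wobs(\xi_1) - \Phi_\Wobs(\xi_2) = \Phi_\Wobs(\xi_1 - \xi_2) \in \liealg{h}_\Null^1$, whence $\Phi_\Wobs(\xi_1) \sim_\MC \Phi_\Wobs(\xi_2)$. Thus $\MCset(\Phi)$ is a morphism of coisotropic sets.

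Finally, functoriality is immediate: since each component of $\MCset(\Phi)$ is a plain restriction of $\Phi$, we have $\MCset(\id_{\liealg{g}}) = \id_{\MCset(\liealg{g})}$ and $\MCset(\Psi \circ \Phi) = \MCset(\Psi) \circ \MCset(\Phi)$, because restriction commutes with composition and preserves identities. There is essentially no hard step here; the only point requiring genuine (if entirely routine) verification is that DGLA morphisms preserve the Maurer--Cartan equation, encapsulated in the displayed computation, and this is exactly what makes both the object assignment (via $\iota_\liealg{g}$) and the arrow assignment (via $\Phi_\Total, \Phi_\Wobs$) land in the right sets. I would therefore record that computation once and invoke it in each of the three places where it is needed.
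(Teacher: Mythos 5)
Your proposal is correct and follows essentially the same route as the paper: the only substantive point is that DGLA morphisms (including $\iota_\liealg{g}$) preserve the Maurer--Cartan equation and that $\Phi_\Wobs(\liealg{g}_\Null^\bullet) \subseteq \liealg{h}_\Null^\bullet$ makes the induced map compatible with $\sim_\MC$. The paper's proof records only these two facts and leaves the well-definedness of the objects and the functoriality axioms implicit, so your version is simply a more explicit write-up of the same argument.
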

\begin{proof}
    Every morphism $\Phi \colon \liealg{g} \to \liealg{h}$ of
    coisotropic DGLAs induces maps
    $\Phi_\Total \colon \MCset(\liealg{g}_\Total) \to
    \MCset(\liealg{h}_\Total)$ and
    $\Phi_\Wobs \colon \MCset(\liealg{g}_\Wobs) \to
    \MCset(\liealg{h}_\Wobs)$.  Moreover, since
    $\Phi_\Wobs \colon \liealg{g}_\Wobs \to \liealg{h}_\Wobs$
    preserves the $\NULL$-component its induced map on
    $\MCset(\liealg{g}_\Wobs)$ maps equivalent elements to equivalent
    elements.
\end{proof}

As in the setting of DGLAs, for a given coisotropic DGLA
$(\liealg{g}, [\argument, \argument], \D)$ and a given Maurer-Cartan
element $\xi_0 \in \MCset(\liealg{g})_\Wobs$ we can always obtain a
twisted coisotropic DGLA by
$\liealg{g}_{\xi_0} = (\liealg{g}, [\argument,\argument], \D_{\xi_0})$
with
\begin{equation}
    \label{eq:TwistedDifferentialMC}
    \D_{\xi_0} := \D + [\xi_0, \argument].
\end{equation}
Here we are using the coevaluation morphism as mentioned in
\autoref{sec:CoisotropicModules}, \ref{InternalHom}.

Note that for any coisotropic DGLA $\liealg{g}$ and coisotropic
algebra $\algebra{A}$ the tensor product
$\liealg{g} \tensor \algebra{A}$ is again a coisotropic DGLA by the
usual construction.  For this observe that
$\liealg{g}_\Null \tensor \algebra{A}_\Wobs + \liealg{g}_\Wobs \tensor
\algebra{A}_\Null$ is indeed a Lie ideal in
$\liealg{g}_\Wobs \tensor \algebra{A}_\Wobs$.

Reformulating the equivalence of deformations of a given Maurer-Cartan
element in terms of its twisted coisotropic DGLA requires a notion of
a coisotropic gauge group.  For this reason we first introduce the
notion of a \emph{coisotropic group}:
\begin{definition}[Coisotropic group]
    \label{definition:CoisoGroup}%
    \begin{definitionlist}
    \item \label{item:CoisoGroup} A triple of groups
        $\group{G} = (\group{G}_\Total, \group{G}_\Wobs,
        \group{G}_\Null)$ together with a group homomorphism
        $\iota_\group{G} \colon \group{G}_\Wobs \to \group{G}_\Total$
        is called a \emph{coisotropic group} if
        $\group{G}_\Null \subseteq \group{G}_\Wobs$ is a normal
        subgroup.
    \item \label{item:CoisoGroupMorph} A \emph{morphism
          $\Phi \colon \group{G} \to \group{H}$ of coisotropic groups}
        $\group{G}$ and $\group{H}$ is given by a pair of group
        homomorphisms
        $\Phi_\Total \colon \group{G}_\Total \to \group{H}_\Total$ and
        $\Phi_\Wobs \colon \group{G}_\Wobs \to \group{H}_\Wobs$ such
        that
        $\iota_\group{H} \circ \Phi_\Wobs = \Phi_\Total \circ
        \iota_\group{G}$ and
        $\Phi_\Wobs(\group{G}_\Null) \subseteq \group{H}_\Null$.
    \item \label{item:CoisoGroupCat} The \emph{category of coisotropic
          groups} is denoted by $\CoisoGroupsTriple$.
    \end{definitionlist}
\end{definition}

Again, we obviously have a reduction functor
$\red \colon \CoisoGroupsTriple \to \Groups$ given by
$\group{G}_\red = \group{G}_\Wobs / \group{G}_\Null$.  Moreover, there
is a forgetful functor $\CoisoGroupsTriple \to \CoisoSetTriple$ by
only keeping the underlying sets and the equivalence relation induced
by the normal subgroup $\group{G}_\Null$.  It can be shown that the
automorphisms of a coisotropic set can be equipped with the structure
of a coisotropic group.  This leads to the definition of an
\emph{action} of a coisotropic group on a coisotropic set.
\begin{definition}[Action of coisotropic group]
    \label{definition:CoisoAction}%
    Let $\group{G}$ be a coisotropic group and $M$ a coisotropic set.
    An \emph{action} of $\group{G}$ on $M$ is given by an action
    $\Phi_\Total \colon \group{G}_\Total \times M_\Total \to M_\Total$
    of $\group{G}_\Total$ on $M_\Total$ and an action
    $\Phi_\Wobs \colon \group{G}_\Wobs \times M_\Wobs \to M_\Wobs$ of
    $\group{G}_\Wobs$ on $M_\Wobs$ such that
    $\iota_M \circ \Phi_\Wobs = \Phi_\Total \circ (\iota_\group{G}
    \times \iota_M)$ and $\Phi_g(m) \sim_M m$ for all
    $g \in \group{G}_\Null$ and $m \in M_\Wobs$.
\end{definition}
\begin{example}[Coisotropic groups and actions]
    \label{example:GroupsActions}
    \
    \begin{examplelist}
    \item \label{item:ExactSequence} Every short exact sequence of
        groups $1 \to \group{H} \to \group{G} \to \group{Q} \to 1$
        defines a coisotropic group
        $(\group{Q}, \group{G}, \group{H})$.
    \item \label{item:CoisoActionFromAction} Let
        $X = (X_\Total, X_\Wobs, \sim)$ a coisotropic set.  Let
        furthermore $\group{G}$ be a group acting on $X_\Total$ via
        $\Phi \colon \group{G} \times X_\Total \to X_\Total$.  Then
        $(\group{G}, \group{G}_{X_\Wobs}, \group{G}_\sim)$, with
        $\group{G}_{X_\Wobs}$ the stabilizer subgroup of the subset
        $X_\Wobs$ and $\group{G}_\sim$ the normal subgroup of
        $\group{G}_{X_\Wobs}$ consisting of all
        $g \in \group{X}_{X_\Wobs}$ such that $\Phi_g(p) \sim p$ for
        all $p \in X_\Wobs$, is a coisotropic group.  Clearly,
        $(\Phi,\Phi\at{\group{G}_{X_\Wobs}})$ gives a coisotropic
        action on $(X_\Total,X_\Wobs, \sim)$.
    \end{examplelist}
\end{example}

To define the coisotropic gauge group we either need to assume that
the DGLA we are starting with has additional properties, e.g. being
nilpotent, or we can use formal power series instead.  Since later on
we are interested in formal deformation theory, we will choose the
latter option.  For this let
$\coisoField{k}\formal{\lambda} = (\field{k}\formal{\lambda},
\field{k}\formal{\lambda},0)$ denote the coisotropic ring of formal
power series in $\field{k}$.

Then the formal power series $\module{E}\formal{\lambda}$ of any
coisotropic $\field{k}$-module $\module{E}$ form a coisotropic
$\field{k}\formal{\lambda}$-module as follows: we set
\begin{equation}
    \label{eq:EPowerSeries}
    \module{E}\formal{\lambda}
    =
    \left(
        \module{E}_\Total\formal{\lambda},
        \module{E}_\Wobs\formal{\lambda},
        \module{E}_\Null\formal{\lambda}
    \right),
\end{equation}
and use the canonical $\lambda$-linear extension
$\iota_{\module{E}\formal{\lambda}}$ of the previous map
$\iota_{\module{E}}\colon \module{E}_\Wobs \longrightarrow
\module{E}_\Total$. According to the usual convention, we denote this
extension simply by $\iota_{\module{E}}$. Note that in general
$\module{E}\formal{\lambda}$ is strictly larger than the tensor
product $\module{E} \tensor \field{k}\formal{\lambda}$: we still need
to take a $\lambda$-adic completion. This is the reasons that we
define $\module{E}\formal{\lambda}$ directly by
\eqref{eq:EPowerSeries}.

It is now easy to see that $\liealg{g}\formal{\lambda}$ is a
coisotropic DGLA for any coisotropic DGLA $\liealg{g}$ by
$\lambda$-linear extension of all structure maps. Similarly, we can
extend coisotropic algebras and their modules.

Note that the gauge action will require to have
$\field{Q} \subseteq \field{k}$ since we need (formal) exponential
series and the (formal) BCH series.
\begin{proposition}
    \label{prop:CoisoGaugeGroup}%
    Let
    $\liealg{g}$ be a coisotropic Lie algebra.  Then
    $\group{G}(\liealg{g})= (\lambda\liealg{g}_\Total\formal{\lambda},
    \lambda\liealg{g}_\Wobs\formal{\lambda},
    \lambda\liealg{g}_\Null\formal{\lambda})$ with multiplication
    $\bullet$ given by the Baker-Campbell-Hausdorff formula is a
    coisotropic group.
\end{proposition}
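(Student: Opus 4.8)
The goal is to verify that the triple $\group{G}(\liealg{g}) = (\lambda\liealg{g}_\Total\formal{\lambda}, \lambda\liealg{g}_\Wobs\formal{\lambda}, \lambda\liealg{g}_\Null\formal{\lambda})$ with BCH multiplication satisfies Definition~\ref{definition:CoisoGroup}: each component must be a group under $\bullet$, the map $\iota_\group{G}$ must be a group homomorphism, and the $\NULL$-component must be a normal subgroup of the $\Wobs$-component.

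The plan is to proceed component by component. First I would recall the classical fact that for a Lie algebra $\liealg{g}$ over $\field{k} \supseteq \field{Q}$, the set $\lambda\liealg{g}\formal{\lambda}$ of formal power series starting at order $\lambda$ forms a group under the Baker-Campbell-Hausdorff product $\xi \bullet \eta = \xi + \eta + \tfrac{1}{2}[\xi,\eta] + \cdots$. The $\lambda$-adic valuation guarantees that the infinite BCH series converges formally (every fixed order of $\lambda$ receives only finitely many contributions), so the product is well defined; associativity, the identity element $0$, and inverses $-\xi$ follow from the formal BCH identities. Applying this to $\liealg{g}_\Total$ and to $\liealg{g}_\Wobs$ immediately gives that $\group{G}(\liealg{g})_\Total$ and $\group{G}(\liealg{g})_\Wobs$ are groups.

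Next I would treat the $\NULL$-component. Since $\liealg{g}_\Null^\bullet \subset \liealg{g}_\Wobs^\bullet$ is a graded \emph{Lie ideal}, every iterated bracket appearing in the BCH series $\xi \bullet \eta$ with at least one argument in $\lambda\liealg{g}_\Null\formal{\lambda}$ again lands in $\lambda\liealg{g}_\Null\formal{\lambda}$; in particular the product of two elements of $\lambda\liealg{g}_\Null\formal{\lambda}$ stays there, so $\group{G}(\liealg{g})_\Null$ is closed under $\bullet$ and is a subgroup of $\group{G}(\liealg{g})_\Wobs$. Normality is the crucial point and uses the ideal property twice: for $g \in \lambda\liealg{g}_\Wobs\formal{\lambda}$ and $n \in \lambda\liealg{g}_\Null\formal{\lambda}$, the conjugate $g \bullet n \bullet g^{-1}$ can be written via the adjoint action as $e^{\ad_g} n$ (the standard BCH-conjugation identity), and since $n \in \liealg{g}_\Null\formal{\lambda}$ and $\liealg{g}_\Null$ absorbs brackets with arbitrary elements of $\liealg{g}_\Wobs$, each term $\tfrac{1}{k!}\ad_g^k(n)$ remains in $\lambda\liealg{g}_\Null\formal{\lambda}$, whence the conjugate does too. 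That $\iota_\group{G} = \iota_\liealg{g}\formal{\lambda}$ is a group homomorphism is automatic because $\iota_\liealg{g}$ is a DGLA morphism, hence Lie-algebra morphism, and therefore commutes term-by-term with the BCH series.

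\textbf{Main obstacle.} The only genuinely delicate step is the normality argument for $\group{G}(\liealg{g})_\Null$: one must be careful that the conjugation formula $g \bullet n \bullet g^{-1} = e^{\ad_g}(n)$ is applied with all series interpreted $\lambda$-adically, and that the Lie-ideal condition is strong enough to keep \emph{every} nested-bracket term in $\liealg{g}_\Null$. The rest is a routine verification of the group axioms transported from the classical formal BCH construction; I would invoke those well-known identities rather than re-deriving them.
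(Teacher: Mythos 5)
Your proposal is correct and follows essentially the same route as the paper: both rely on the classical formal BCH group structure for the $\Total$- and $\Wobs$-components, observe that $\iota_\liealg{g}$ being a Lie morphism makes it a group morphism term by term, and use the Lie ideal property of $\liealg{g}_\Null$ for normality. Your use of the conjugation identity $g \bullet n \bullet g^{-1} = \E^{\ad(g)}(n)$ is a slightly more explicit version of the paper's direct BCH expansion (and, as a bonus, makes it manifest that every surviving term contains at least one factor of $n$), but it is the same argument in substance.
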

\begin{proof}
    The additional prefactor $\lambda$ makes all the BCH series
    $\lambda$-adically convergent.  The well-known group structures on
    $\liealg{g}_\Total\formal{\lambda}$ and
    $\liealg{g}_\Wobs\formal{\lambda}$ are given by the BCH formula
    and we clearly have a group morphism
    $\liealg{g}_\Wobs\formal{\lambda} \to
    \liealg{g}_\Total\formal{\lambda}$.  Finally, we need to show that
    $\lambda\liealg{g}_\Null\formal{\lambda}$ is a normal subgroup of
    $\lambda\liealg{g}_\Wobs\formal{\lambda}$.  For this let
    $\lambda g \in \lambda\liealg{g}_\Wobs\formal{\lambda}$ and
    $\lambda h \in \lambda\liealg{g}_\Null\formal{\lambda}$ be given.
    Since by the BCH formula
    $\lambda g \bullet \lambda h \bullet (\lambda g)^{-1} = \lambda
    g_0 + \lambda h_0 - \lambda g_0 + \lambda^2(\cdots)$, where all
    higher order terms are given by Lie brackets and
    $\liealg{g}_\Null$ is a Lie ideal in $\liealg{g}_\Wobs$, we see
    that
    $\lambda g \bullet \lambda h \bullet (\lambda g)^{-1} \in
    \lambda\liealg{g}_\Null\formal{\lambda}$.
\end{proof}

By abuse of notation we will write
$\group{G}(\liealg{g}) = \group{G}(\liealg{g}^0)$ for every
coisotropic DGLA $\liealg{g}$.  With the composition $\bullet$ on
$\group{G}(\liealg{g})$ defined by the Baker-Campbell-Hausdorff
formula it is immediately clear that every morphism
$\Phi \colon \liealg{g} \to \liealg{h}$ of coisotropic DGLAs induces a
morphism
$\group{G}(\Phi) \colon \group{G}(\liealg{g}) \to
\group{G}(\liealg{h})$ of the corresponding gauge groups, given by the
$\lambda$-linear extension of $\Phi$.  In other words, we obtain a
functor
$\group{G} \colon \CoisoDGLieAlgTriple \to \CoisoGroupsTriple$.

The usual gauge action of the formal group on the (formal)
Maurer-Cartan elements can now be extended to a coisotropic DGLA as
follows:
\begin{proposition}[Gauge action]
    \label{prop:GaugeAction}%
    Let
    $(\liealg{g}, [\argument,\argument], \D)$ be a coisotropic DGLA.
    Then the coisotropic group $\group{G}(\liealg{g})$ acts on the
    coisotropic set $\MCset(\lambda\liealg{g}\formal{\lambda})$ by
    \begin{equation}
        \label{eq:ActsTotal}
        \lambda g \acts_\Total \xi
        :=
        \E^{\lambda \ad_\Total(g)}(\xi)
        -
        \lambda \sum_{k=0}^{\infty}
        \frac{(\lambda \ad_\Total(g))^k}{(1+k)!}(\D_\Total g)
    \end{equation}
    for $\lambda g \in \group{G}(\liealg{g})_\Total$ and
    $\xi \in \MCset(\lambda\liealg{g}\formal{\lambda})_\Total$ as well
    as
    \begin{equation}
        \label{eq:ActsWobs}
	\lambda g \acts_\Wobs \xi
        :=
        \E^{\lambda \ad_\Wobs(g)}(\xi)
        -
        \lambda \sum_{k=0}^{\infty}
        \frac{(\lambda \ad_\Wobs(g))^k}{(1+k)!}(\D_\Wobs g)
    \end{equation}
    for $\lambda g \in \group{G}(\liealg{g})_\Wobs$ and
    $\xi \in \MCset(\lambda\liealg{g}\formal{\lambda})_\Wobs$.
\end{proposition}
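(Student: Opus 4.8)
The plan is to deduce the coisotropic gauge action from the classical gauge action applied to each of the two components separately, and then to verify the two compatibility conditions demanded by \autoref{definition:CoisoAction}. By standard DGLA deformation theory over a ring containing $\field{Q}$ (see e.g. \cite{manetti:2009a}), the formula \eqref{eq:ActsTotal} defines an action of the group $\lambda\liealg{g}_\Total\formal{\lambda}$ on $\MCset(\lambda\liealg{g}\formal{\lambda})_\Total$, and likewise \eqref{eq:ActsWobs} defines an action of $\lambda\liealg{g}_\Wobs\formal{\lambda}$ on $\MCset(\lambda\liealg{g}\formal{\lambda})_\Wobs$. In particular each formula is $\lambda$-adically convergent thanks to the prefactor $\lambda$, sends Maurer-Cartan elements to Maurer-Cartan elements, and is compatible with the Baker-Campbell-Hausdorff multiplication of \autoref{prop:CoisoGaugeGroup}. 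This settles everything at the level of the individual components.

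It then remains to check the two genuinely coisotropic conditions. First, compatibility with the structure maps: since $\iota_\liealg{g}\colon \liealg{g}_\Wobs^\bullet \to \liealg{g}_\Total^\bullet$ is a morphism of DGLAs, it intertwines the differentials, $\iota_\liealg{g}\circ \D_\Wobs = \D_\Total\circ\iota_\liealg{g}$, and the adjoint representations, $\iota_\liealg{g}\circ \ad_\Wobs(g) = \ad_\Total(\iota_\liealg{g}(g))\circ\iota_\liealg{g}$. Applying $\iota_\liealg{g}$ termwise to \eqref{eq:ActsWobs} and using these two intertwining relations reproduces precisely \eqref{eq:ActsTotal} evaluated at $\iota_\liealg{g}(g)$ and $\iota_\liealg{g}(\xi)$, which is exactly the required identity $\iota_M\circ\acts_\Wobs = \acts_\Total\circ(\iota_\group{G}\times\iota_M)$.

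The substantive step, and the one I expect to carry the real content, is the second condition: that the $\NULL$-part of the group acts trivially up to $\sim_\MC$. Given $\lambda g\in\lambda\liealg{g}_\Null\formal{\lambda}$ and $\xi\in\MCset(\lambda\liealg{g}\formal{\lambda})_\Wobs$, I must show $(\lambda g\acts_\Wobs\xi)-\xi\in\lambda\liealg{g}_\Null^1\formal{\lambda}$. Writing out the difference gives
\begin{equation*}
    (\lambda g\acts_\Wobs\xi)-\xi
    =
    \big(\E^{\lambda\ad_\Wobs(g)}-\id\big)(\xi)
    -
    \lambda\sum_{k=0}^{\infty}\frac{(\lambda\ad_\Wobs(g))^k}{(1+k)!}(\D_\Wobs g).
\end{equation*}
In the first summand every term contains at least one factor $\ad_\Wobs(g)=[g,\argument]$; since $\liealg{g}_\Null$ is a graded Lie ideal of $\liealg{g}_\Wobs$, we have $[g,\xi]\in\liealg{g}_\Null$ and iterated brackets with $g$ remain there. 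In the second summand $\D_\Wobs g\in\liealg{g}_\Null$ because $\D_\Wobs(\liealg{g}_\Null)\subseteq\liealg{g}_\Null$ by the definition of a coisotropic DGLA, and again the operators $\ad_\Wobs(g)^k$ keep it inside the ideal. Hence both summands lie in $\lambda\liealg{g}_\Null^1\formal{\lambda}$, so $\lambda g\acts_\Wobs\xi\sim_\MC\xi$. This is the only place where both the Lie-ideal property and the differential-stability of $\liealg{g}_\Null$ are genuinely used; everything else is a transcription of the classical construction onto the two components.
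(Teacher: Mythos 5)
Your proposal is correct and follows essentially the same route as the paper's proof: quote the classical result for each component separately, check that $\iota_\liealg{g}$ intertwines $\acts_\Wobs$ and $\acts_\Total$ because it commutes with $\D$ and $\ad$, and verify that for $\lambda g$ in the $\NULL$-component the difference $(\lambda g \acts_\Wobs \xi) - \xi$ lands in $\lambda\liealg{g}_\Null\formal{\lambda}$ using the Lie-ideal property and $\D_\Wobs(\liealg{g}_\Null)\subseteq\liealg{g}_\Null$. The paper's argument is the same term-by-term analysis of the exponential series, so there is nothing to add.
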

\begin{proof}
    Clearly, $\acts_\Total$ and $\acts_\Wobs$ define actions of
    $\group{G}(\liealg{g})_\Total$ and $\group{G}(\liealg{g})_\Wobs$
    on $\MCset(\lambda\liealg{g}\formal{\lambda})_\Total$ and
    $\MCset(\lambda\liealg{g}\formal{\lambda})_\Wobs$, respectively,
    by classical results, see \cite{esposito:2015a}.  Moreover, writing
    out the exponential series and using the fact that
    $\ad(g) = [g, \argument]$ and $\D$ commute with $\iota_\liealg{g}$
    directly yields
    \begin{align*}
        \iota_\liealg{g}
        \left(\lambda g \acts_\Wobs \xi\right)
        &=
        \E^{\lambda \ad_\Total(\iota_\liealg{g}(g))}(\iota_\liealg{g}(\xi))
        -
        \lambda \sum_{k=0}^{\infty}
        \frac{(\lambda \ad_\Total(\iota_\liealg{g}(g)))^k}{(1+k)!}
        (\D_\Total \iota_\liealg{g}(g))
        \\
        &=
        \lambda \iota_\liealg{g}(g) \acts_\Total \iota_\liealg{g}(\xi).
    \end{align*}
    Finally, we have for any
    $\lambda g \in \group{G}(\liealg{g})_\Null$ and
    $\xi \in \MCset(\lambda\liealg{g}\formal{\lambda})_\Wobs$
    \begin{align*}
	\E^{\lambda \ad_\Wobs(g)}(\xi) - \xi
	&=
        \sum_{k=0}^{\infty} \frac{\lambda^k}{k!} (\ad_\Wobs(g))^k(\xi)
        -
        \lambda \sum_{k=0}^{\infty}
        \frac{(\lambda \ad_\Wobs(g))^k}{(1+k)!}(\D_\Wobs g)
        - \xi
        \\
        &=
        \sum_{k=1}^{\infty} \frac{\lambda^k}{k!} (\ad_\Wobs(g))^k (\xi)
        -
        \lambda \sum_{k=0}^{\infty}
        \frac{(\lambda \ad_\Wobs(g))^k}{(1+k)!}(\D_\Wobs g)
	\in \lambda\liealg{g}_\Null\formal{\lambda},
	\end{align*}
        since $\D_\Wobs g \in \liealg{g}_\Null\formal{\lambda}$ and
        $\ad_\Wobs(g)(\xi) \in \liealg{g}_\Null\formal{\lambda}$.
\end{proof}

\subsection{Deformation functor and reduction}

Maurer-Cartan elements are said to be \emph{equivalent} if they lie in
the same orbit of the gauge action.  Hence the object of interest for
deformation theory is not the set of Maurer-Cartan elements itself but
its set of equivalence classes.  More precisely let us denote by
$\Def(\liealg{g})$ the pair given by
\begin{align}
    \Def(\liealg{g})_\Total
    &:=
    \MCset(\lambda\liealg{g}\formal{\lambda})_\Total
    /
    \group{G}(\liealg{g})_\Total
    \\
    \shortintertext{and}
    \Def(\liealg{g})_\Wobs
    &:=
    \MCset(\lambda\liealg{g}\formal{\lambda})_\Wobs
    /
    \group{G}(\liealg{g})_\Wobs
\end{align}
with an equivalence relation on $\Def(\liealg{g})_\Wobs$ defined by
\begin{equation}
    \label{eq:GaugeEquivalence}
    [\xi_1] \sim [\xi_2]
    :\Longleftrightarrow
    \xi_1 \sim_\MC \xi_2.
\end{equation}
\begin{proposition}
    \label{proposition:DefCoisoSet}%
    Let $\liealg{g}$ be a coisotropic
    DGLA.  Then $\Def(\liealg{g})$ is a coisotropic set.
\end{proposition}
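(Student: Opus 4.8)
According to \autoref{definition:CoisoSet} I must exhibit three pieces of data and check their compatibility: the two underlying sets $\Def(\liealg{g})_\Total$ and $\Def(\liealg{g})_\Wobs$, a map $\iota_{\Def}$ between them, and an equivalence relation on $\Def(\liealg{g})_\Wobs$. The first point is immediate: by \autoref{prop:GaugeAction} the group $\group{G}(\liealg{g})_\Total$ acts on $\MCset(\lambda\liealg{g}\formal{\lambda})_\Total$ and $\group{G}(\liealg{g})_\Wobs$ acts on $\MCset(\lambda\liealg{g}\formal{\lambda})_\Wobs$, so the two orbit spaces are honest sets. The work lies in the map $\iota_{\Def}$ and in showing that the relation from \eqref{eq:GaugeEquivalence} descends to a well-defined equivalence relation on the orbit space.

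For the map the plan is to set $\iota_{\Def}([\xi]) := [\iota_\liealg{g}(\xi)]$, using that $\iota_\liealg{g}$ restricts to $\iota_\MC$ on Maurer--Cartan elements. The only thing to verify is that this passes to orbits, i.e. that $\iota_\liealg{g}$ sends a $\group{G}(\liealg{g})_\Wobs$-orbit into a single $\group{G}(\liealg{g})_\Total$-orbit. This is exactly the intertwining identity established inside the proof of \autoref{prop:GaugeAction}, namely $\iota_\liealg{g}(\lambda g \acts_\Wobs \xi) = \lambda\,\iota_\liealg{g}(g) \acts_\Total \iota_\liealg{g}(\xi)$, together with the fact that $\iota_\group{G}$ maps $\group{G}(\liealg{g})_\Wobs$ into $\group{G}(\liealg{g})_\Total$. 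Hence if $\xi' = \lambda g \acts_\Wobs \xi$, then $\iota_\liealg{g}(\xi')$ lies in the $\group{G}(\liealg{g})_\Total$-orbit of $\iota_\liealg{g}(\xi)$, so $\iota_{\Def}$ is well-defined.

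The heart of the matter is the relation $\sim$. Since $\Def(\liealg{g})_\Wobs$ is a quotient by the gauge action while $\sim_\MC$ from \eqref{eq:MCEquivalence} is formulated on representatives, the prescription \eqref{eq:GaugeEquivalence} only acquires meaning on the quotient once one reads it as \emph{the classes admit $\sim_\MC$-related representatives} and checks that this descends; this is precisely where the non-abelian nature of $\CoisoModTriple_\coisoField{k}$ forces care, and I expect it to be the main obstacle. The key lemma I would isolate is that the $\Wobs$-gauge action preserves $\sim_\MC$: if $\xi_1-\xi_2\in\lambda\liealg{g}_\Null^1\formal{\lambda}$, then $(\lambda g\acts_\Wobs\xi_1)-(\lambda g\acts_\Wobs\xi_2)\in\lambda\liealg{g}_\Null^1\formal{\lambda}$ for every $\lambda g\in\group{G}(\liealg{g})_\Wobs$. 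To see this I would use the explicit formula \eqref{eq:ActsWobs}: the inhomogeneous $\D_\Wobs g$-terms do not depend on the Maurer--Cartan element and therefore cancel in the difference, leaving $\E^{\lambda\ad_\Wobs(g)}(\xi_1-\xi_2)$; since $\liealg{g}_\Null^\bullet\subset\liealg{g}_\Wobs^\bullet$ is a graded Lie ideal, each power $\ad_\Wobs(g)^k$ preserves $\lambda\liealg{g}_\Null^1\formal{\lambda}$, and hence so does the exponential.

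Granting this lemma, reflexivity and symmetry of $\sim$ are inherited directly from $\sim_\MC$, and both representative-independence and transitivity follow by a single argument: if $\lambda g_1\acts_\Wobs\xi_1\sim_\MC\lambda g_2\acts_\Wobs\xi_2$ and $\lambda g_2'\acts_\Wobs\xi_2\sim_\MC\lambda g_3\acts_\Wobs\xi_3$ witness $[\xi_1]\sim[\xi_2]$ and $[\xi_2]\sim[\xi_3]$, then setting $\lambda h:=\lambda g_2'\bullet(\lambda g_2)^{-1}$ and applying the lemma to the first relation gives $(\lambda h\bullet\lambda g_1)\acts_\Wobs\xi_1\sim_\MC(\lambda h\bullet\lambda g_2)\acts_\Wobs\xi_2=\lambda g_2'\acts_\Wobs\xi_2$; composing with the second relation and the transitivity of $\sim_\MC$ yields $(\lambda h\bullet\lambda g_1)\acts_\Wobs\xi_1\sim_\MC\lambda g_3\acts_\Wobs\xi_3$, i.e. $[\xi_1]\sim[\xi_3]$. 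The remaining verifications are routine bookkeeping with the BCH group structure of \autoref{prop:CoisoGaugeGroup} and the functoriality from \autoref{lem:MaurerCartanFunctor}.
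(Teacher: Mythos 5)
Your proposal is correct, and for the crucial step it takes a genuinely different --- and in fact more careful --- route than the paper. For the map $\iota_{\Def}$ the two arguments coincide: the intertwining identity $\iota_\liealg{g}(\lambda g \acts_\Wobs \xi) = \lambda\,\iota_\liealg{g}(g)\acts_\Total\iota_\liealg{g}(\xi)$ from \autoref{prop:GaugeAction} lets $\iota_\MC$ descend to orbits. For the equivalence relation, however, the paper argues that any other representative $\lambda g\acts_\Wobs\xi_1$ of the orbit $[\xi_1]$ already satisfies $\lambda g\acts_\Wobs\xi_1\sim_\MC\xi_1$, citing \autoref{prop:GaugeAction}; but that proposition only establishes $\lambda g\acts_\Wobs\xi-\xi\in\lambda\liealg{g}_\Null\formal{\lambda}$ for $\lambda g\in\group{G}(\liealg{g})_\Null$, and for a general $\lambda g\in\group{G}(\liealg{g})_\Wobs$ the difference contains the inhomogeneous term $-\lambda\D_\Wobs g$, which need not lie in $\liealg{g}_\Null^1$. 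Your key lemma is a different statement: not that the gauge action moves a point within its $\sim_\MC$-class, but that it preserves the \emph{relation} $\sim_\MC$, since in the difference $(\lambda g\acts_\Wobs\xi_1)-(\lambda g\acts_\Wobs\xi_2)=\E^{\lambda\ad_\Wobs(g)}(\xi_1-\xi_2)$ the $\D_\Wobs g$-terms cancel and $\liealg{g}_\Null^\bullet$ is a graded Lie ideal. Together with your explicit choice of reading \eqref{eq:GaugeEquivalence} as ``the classes admit $\sim_\MC$-related representatives,'' this gives reflexivity and symmetry for free and reduces both representative-independence and transitivity to that lemma; your BCH bookkeeping for the transitivity step is correct. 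What your approach buys is a verification that works for the full gauge group $\group{G}(\liealg{g})_\Wobs$ rather than only its $\NULL$-component, at the modest cost of spelling out the cancellation computation that the paper leaves implicit.
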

\begin{proof}
    By \autoref{prop:GaugeAction} we know that the action of
    $\group{G}(\liealg{g})$ is compatible with
    $\iota_\MC \colon \MCset(\lambda\liealg{g}\formal{\lambda})_\Wobs
    \to \MCset(\lambda\liealg{g}\formal{\lambda})_\Total$, hence
    $\iota_\MC$ descends to a morphism
    $\iota_\MC \colon \Def(\liealg{g})_\Wobs \to
    \Def(\liealg{g})_\Total$.  To see that \eqref{eq:GaugeEquivalence}
    yields a well-defined equivalence relation suppose that
    $\lambda g \acts \xi_1$ is another representative of $[\xi_1]$.
    Then again by \autoref{prop:GaugeAction} we know that
    $\lambda g \acts \xi_1 - \xi_1 \in
    \lambda\liealg{g}\formal{\lambda}_\Null$ and thus
    $\lambda g \acts \xi_1 \sim_\MC \xi_1$, showing that
    \eqref{eq:GaugeEquivalence} is well-defined.
\end{proof}

We have seen in \autoref{lem:MaurerCartanFunctor} that morphisms of
coisotropic DGLAs induce morphisms between the corresponding
coisotropic sets of Maurer-Cartan elements.  This is still true after
taking the quotient by the coisotropic gauge group.
\begin{proposition}
    \label{proposition:DefFunctor}%
    Mapping coisotropic DGLAs $\liealg{g}$ to the quotient set
    $\Def(\liealg{g})$ defines a functor
    \begin{equation}
        \label{eq:DefFunctor}
        \Def \colon \CoisoDGLieAlgTriple \to \CoisoSetTriple.
    \end{equation}
\end{proposition}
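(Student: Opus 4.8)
The plan is to verify that $\Def$ is genuinely functorial, namely that a morphism $\Phi \colon \liealg{g} \to \liealg{h}$ of coisotropic DGLAs descends to a well-defined morphism $\Def(\Phi) \colon \Def(\liealg{g}) \to \Def(\liealg{h})$ of coisotropic sets, and that this assignment respects composition and identities. The object part is already handled by \autoref{proposition:DefCoisoSet}, so the content is entirely in the morphism part.

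First I would recall from \autoref{lem:MaurerCartanFunctor} that $\Phi$ induces maps $\Phi_\Total \colon \MCset(\liealg{g}_\Total) \to \MCset(\liealg{h}_\Total)$ and $\Phi_\Wobs \colon \MCset(\liealg{g}_\Wobs) \to \MCset(\liealg{h}_\Wobs)$, and these extend $\lambda$-linearly to the formal setting $\MCset(\lambda\liealg{g}\formal{\lambda})$. The key compatibility to establish is that these maps are equivariant for the gauge action, i.e. that they intertwine the action of $\group{G}(\liealg{g})$ via the induced group morphism $\group{G}(\Phi) \colon \group{G}(\liealg{g}) \to \group{G}(\liealg{h})$. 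Concretely, I would check that
\begin{equation}
    \Phi_\Total(\lambda g \acts_\Total \xi)
    =
    \lambda \Phi_\Total(g) \acts_\Total \Phi_\Total(\xi)
\end{equation}
and the analogous identity for the $\Wobs$-component. This follows because $\Phi_\Total$ and $\Phi_\Wobs$ are DGLA morphisms: they commute with the brackets, hence with $\ad$, and with the differentials $\D$, so applying $\Phi$ term by term to the exponential series and the second summand in \eqref{eq:ActsTotal}, \eqref{eq:ActsWobs} reproduces the corresponding formula built from $\Phi(g)$ and $\Phi(\xi)$. This equivariance guarantees that $\Phi$ descends to the quotients $\Def(\liealg{g})_\Total \to \Def(\liealg{h})_\Total$ and $\Def(\liealg{g})_\Wobs \to \Def(\liealg{h})_\Wobs$.

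Next I would verify the two remaining conditions for a morphism of coisotropic sets in the sense of \autoref{definition:CoisoSet}. Compatibility with $\iota_\MC$ is inherited from the identity $\Phi_\Total \circ \iota_\liealg{g} = \iota_\liealg{h} \circ \Phi_\Wobs$ after passing to quotients. Compatibility with the equivalence relation $\sim_\MC$ uses that $\Phi_\Wobs(\liealg{g}_\Null^\bullet) \subseteq \liealg{h}_\Null^\bullet$: if $\xi_1 - \xi_2 \in \liealg{g}_\Null^1\formal{\lambda}$ then $\Phi_\Wobs(\xi_1) - \Phi_\Wobs(\xi_2) = \Phi_\Wobs(\xi_1 - \xi_2) \in \liealg{h}_\Null^1\formal{\lambda}$, so by \eqref{eq:MCEquivalence} the images are again $\sim_\MC$-equivalent, which by \eqref{eq:GaugeEquivalence} means the induced relation on $\Def$ is preserved. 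Functoriality under composition and identities is then immediate, since $\MCset$ and $\group{G}$ are already functors and passing to quotients is compatible with composition.

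The main obstacle I anticipate is the equivariance check: one must confirm that the nonlinear second term $\lambda \sum_{k} \frac{(\lambda\ad(g))^k}{(1+k)!}(\D g)$ transforms correctly, which hinges on $\Phi$ commuting with both $\ad$ and $\D$ simultaneously. This is where being a morphism of DGLAs (as opposed to merely a chain map or merely a Lie morphism) is essential, and it is the one place where a careful, if routine, manipulation of the defining series is needed.
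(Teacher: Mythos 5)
Your proposal is correct and follows essentially the same route as the paper: the heart of the argument in both is the equivariance of $\MCset(\Phi)$ along $\group{G}(\Phi)$, verified by pushing $\Phi$ through the exponential series and the $\D$-term of the gauge action using that a DGLA morphism commutes with both $\ad$ and $\D$. The additional checks you spell out (compatibility with $\iota_\MC$ and with $\sim_\MC$, and functoriality of composition) are left implicit in the paper but are exactly the right routine verifications.
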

\begin{proof}
    Given a morphism $\Phi \colon \liealg{g} \to \liealg{h}$ of
    coisotropic DGLAs we get morphisms
    $\MCset(\Phi) \colon \MCset(\lambda \liealg{g}\formal{\lambda})
    \to \MCset(\lambda\liealg{h}\formal{\lambda})$ and
    $\group{G}(\Phi) \colon \group{G}(\liealg{g}) \to
    \group{G}(\liealg{h})$ as shown in
    \autoref{lem:MaurerCartanFunctor} and after
    \autoref{prop:CoisoGaugeGroup}.  Then we have
    \begin{align*}
        \MCset(\Phi) ( \lambda g \acts_\Wobs \xi)
        &=
        \Phi_\Wobs \left(
            \E^{\lambda\ad_\Wobs(g)}(\xi)
            -
            \lambda \sum_{k=0}^{\infty}
            \frac{(\lambda \ad_\Wobs(g))^k}{(1+k)!}(\D_\Wobs g)
        \right)
        \\
        &=
        \E^{\lambda(\ad_\Wobs(\Phi_\Wobs(g)))}(\Phi_\Wobs(\xi))
        -
        \lambda \sum_{k=0}^{\infty}
        \frac{(\lambda \ad_\Wobs(\Phi_\Wobs(g)))^k}{(1+k)!}
        (\D_\Wobs \Phi_\Wobs(g))
        \\
        &=
        \group{G}(\Phi)(\lambda g) \acts \MCset(\Phi)(\xi),
    \end{align*}
    and similar for the $\TOTAL$-component, showing that
    $\MCset(\Phi)$ is equivariant along $\group{G}(\Phi)$ and hence
    inducing a morphism $\Def(\Phi)$ as needed.
\end{proof}

The question arises if the above constructions of the coisotropic set
of Maurer-Cartan elements, the coisotropic gauge group and the
deformation functor commute with reduction.  The next theorem shows
that this is partially true, in the sense that at least an injective
natural transformation exists.
\begin{theorem}[Gauge group and reduction]
    \label{thm:defvsred}%
    Let $\field{k}$ be a commutative ring containing $\field{Q}$.
    \begin{theoremlist}
    \item \label{item:MCredNatural} There exists an injective natural
        transformation
        $\eta \colon \red \circ \MCset \Longrightarrow \MCset \circ
        \red$, i.e.
        \begin{equation}
            \begin{tikzcd}
                \CoisoDGLieAlgTriple
                \arrow{r}{\MCset}
                \arrow{d}[swap]{\red}
                & \CoisoSetTriple
                \arrow{d}{\red}
                \arrow[Rightarrow]{dl}[swap]{\eta}\\
                \DGLieAlg
                \arrow{r}{\MCset}
                &\Sets
            \end{tikzcd}
	\end{equation}
        commutes with $\eta$ injective.
    \item \label{item:GredNatural} There exists a natural isomorphism
        $\eta \colon \red \circ \group{G} \Longrightarrow \group{G}
        \circ \red$, i.e.
        \begin{equation}
            \begin{tikzcd}
                \CoisoDGLieAlgTriple
                \arrow{r}{\group{G}}
                \arrow{d}[swap]{\red}
                & \CoisoGroupsTriple
                \arrow{d}{\red}
                \arrow[Rightarrow]{dl}[swap]{\eta}\\
                \DGLieAlg
                \arrow{r}{\group{G}}
                &\Groups
            \end{tikzcd}
	\end{equation}
        commutes with $\eta$ injective.
    \item \label{item:DefredNatural} There exists an injective natural
        transformation
        $\eta \colon \red \circ \Def \Longrightarrow \Def \circ \red$,
        i.e.
        \begin{equation}
            \begin{tikzcd}
		\CoisoDGLieAlgTriple
		\arrow{r}{\Def}
		\arrow{d}[swap]{\red}
		& \CoisoSetTriple
		\arrow{d}{\red}
		\arrow[Rightarrow]{dl}[swap]{\eta}\\
		\DGLieAlg
		\arrow{r}{\Def}
		&\Sets
            \end{tikzcd}
	\end{equation}
        commutes with $\eta$ injective.
    \end{theoremlist}
\end{theorem}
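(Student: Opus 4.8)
The plan is to construct all three natural transformations from the single projection $\pi_{\liealg{g}} \colon \liealg{g}_\Wobs \to \liealg{g}_\red = \liealg{g}_\Wobs/\liealg{g}_\Null$, which is a morphism of DGLAs, and to derive \ref{item:DefredNatural} from \ref{item:MCredNatural} together with \ref{item:GredNatural}. In each part naturality follows immediately from the fact that any morphism $\Phi \colon \liealg{g} \to \liealg{h}$ of coisotropic DGLAs commutes with the relevant projections, by the same computation as in the proof of \autoref{proposition:DefFunctor}; so I will only construct $\eta$ and verify injectivity.

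For \ref{item:MCredNatural} I would set $\eta(\liealg{g})$ to send the class of $\xi \in \MCset(\liealg{g}_\Wobs)$ modulo $\sim_\MC$ to $\pi_{\liealg{g}}(\xi) \in \MCset(\liealg{g}_\red)$. Since $\pi_{\liealg{g}}$ is a DGLA morphism it preserves the Maurer-Cartan equation, so the image lands in $\MCset(\liealg{g}_\red)$; and because $\xi_1 \sim_\MC \xi_2$ means precisely $\xi_1 - \xi_2 \in \liealg{g}_\Null^1 = (\ker \pi_{\liealg{g}})^1$, the assignment descends to $\red(\MCset(\liealg{g}))$ and is injective there. It is in general not surjective, as a Maurer-Cartan element of $\liealg{g}_\red$ need not lift to one of $\liealg{g}_\Wobs$; this is exactly why one obtains only an injective natural transformation.

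For \ref{item:GredNatural} recall $\group{G}(\liealg{g})_\red = \lambda\liealg{g}_\Wobs\formal{\lambda}/\lambda\liealg{g}_\Null\formal{\lambda}$ while $\group{G}(\liealg{g}_\red) = \lambda\liealg{g}_\red\formal{\lambda}$. I would take $\eta(\liealg{g})$ to be the canonical bijection induced by $\pi_{\liealg{g}}$. Because the multiplication $\bullet$ on either side is the Baker-Campbell-Hausdorff series, built entirely from iterated Lie brackets, and $\pi_{\liealg{g}}$ is a Lie morphism, this bijection is a group homomorphism; hence here $\eta$ is an honest isomorphism, and in particular the map $\group{G}(\liealg{g})_\Wobs \to \group{G}(\liealg{g}_\red)$ is surjective.

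For \ref{item:DefredNatural} I would define $\eta(\liealg{g})$ on $\red(\Def(\liealg{g}))$ by sending the class of the gauge orbit of $\xi \in \MCset(\lambda\liealg{g}\formal{\lambda})_\Wobs$ to the gauge orbit of $\pi_{\liealg{g}}(\xi)$ in $\Def(\liealg{g}_\red)$. Well-definedness rests on two points: by the intertwining $\pi_{\liealg{g}}(\lambda g \acts_\Wobs \xi) = \lambda\pi_{\liealg{g}}(g) \acts \pi_{\liealg{g}}(\xi)$ from the proof of \autoref{proposition:DefFunctor}, replacing $\xi$ by a gauge-equivalent element does not change the target orbit; and replacing $\xi$ by a $\sim_\MC$-equivalent one leaves $\pi_{\liealg{g}}(\xi)$ unchanged, the difference lying in the kernel $\lambda\liealg{g}_\Null\formal{\lambda}$. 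The hard part is injectivity, where the two quotients — by gauge orbits and by $\sim_\MC$ — must be handled simultaneously. Suppose $\bar g \acts \pi_{\liealg{g}}(\xi_1) = \pi_{\liealg{g}}(\xi_2)$ for some $\bar g \in \group{G}(\liealg{g}_\red)$. Using the surjectivity from \ref{item:GredNatural} I write $\bar g = \lambda\pi_{\liealg{g}}(g)$ with $\lambda g \in \group{G}(\liealg{g})_\Wobs$; then $\pi_{\liealg{g}}(\lambda g \acts_\Wobs \xi_1) = \pi_{\liealg{g}}(\xi_2)$, whence $(\lambda g \acts_\Wobs \xi_1) - \xi_2 \in \lambda\liealg{g}_\Null\formal{\lambda}$, i.e. $\lambda g \acts_\Wobs \xi_1 \sim_\MC \xi_2$. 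As $\lambda g \acts_\Wobs \xi_1$ lies in the same gauge orbit as $\xi_1$, the classes of $\xi_1$ and $\xi_2$ coincide in $\red(\Def(\liealg{g}))$, proving injectivity. The essential leverage throughout is that \ref{item:GredNatural} is a genuine isomorphism, so any gauge equivalence realised downstairs in $\liealg{g}_\red$ can always be lifted back to $\liealg{g}_\Wobs$.
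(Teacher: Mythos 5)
Your proposal is correct and follows essentially the same route as the paper: the same maps induced by the projection $\liealg{g}_\Wobs \to \liealg{g}_\red$ in all three parts, the same kernel argument for injectivity in \ref{item:MCredNatural}, the BCH-built-from-brackets observation in \ref{item:GredNatural}, and the same lifting of the gauge element for the injectivity in \ref{item:DefredNatural} (which the paper performs implicitly by writing $\lambda[h]\acts[\lambda g']_\MC = [\lambda h \acts_\Wobs \lambda g']_\MC$, where you make the reliance on the surjectivity from \ref{item:GredNatural} explicit). No gaps.
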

\begin{proof}
    \begin{theoremlist}
    \item In the following we denote by $[\argument]_\MC$ the
        equivalence classes of elements in $\MCset(\liealg{g}_\Wobs)$
        and by $[\argument]_\liealg{g}$ the equivalence classes of
        elements in $\liealg{g}_\Wobs$.  For any coisotropic DGLA
        $\liealg{g}$ define
        $\eta_\liealg{g} \colon \MCset(\liealg{g})_\red \to
        \MCset(\liealg{g}_\red)$ by
        $\eta_\liealg{g}([\xi]_\MC) = [\xi]_\liealg{g}$.  This map is
        well-defined since $[\xi]_\MC \subseteq [\xi]_\liealg{g}$ and
        \begin{equation*}
            \D_\red [\xi]_\liealg{g}
            +
            \big[ [\xi]_\liealg{g}, [\xi]_\liealg{g} \big]_\red
            =
            \big[ \D_\Wobs \xi + [\xi,\xi]_\Wobs \big]_\liealg{g}
            =
            [0]_\liealg{g}
	\end{equation*}
        for every $\xi \in \MCset(\liealg{g}_\Wobs)$.  To show that
        $\eta_\liealg{g}$ is injective let
        $[\xi_1]_\MC, [\xi_2]_\MC \in \MCset(\liealg{g})_\red$ be
        given such that $[\xi_1]_\liealg{g} = [\xi_2]_\liealg{g}$.
        Then $\xi_2 \in [\xi_1]_\liealg{g}$ and hence
        $\xi_1-\xi_2 \in \liealg{g}_\Null^1$.  Thus by definition
        $\xi_1 \sim_\MC \xi_2$ and therefore
        $[\xi_1]_\MC = [\xi_2]_\MC$.  To show naturality of $\eta$ let
        a morphism $\Phi \colon \liealg{g} \to \liealg{h}$ of
        coisotropic DGLAs be given.  This induces morphisms
        $\Phi \colon \MCset(\liealg{g})_\red \to
        \MCset(\liealg{h})_\red$ and
        $\Phi \colon \MCset(\liealg{g}_\red) \to
        \MCset(\liealg{h}_\red)$ by applying $\Phi_\Wobs$ to
        representatives.  Then we have
        \begin{equation*}
            (\eta_\liealg{h} \circ \Phi)([\xi]_\MC)
            = \eta_\liealg{h}([\Phi_\Wobs(\xi)]_\MC)
            = [\Phi_\Wobs(\xi)]_\liealg{h}
            = \Phi([\xi]_\liealg{g})
            = \Phi(\eta_\liealg{g}([\xi]_\MC)),
        \end{equation*}
        showing that $\eta$ is natural.
    \item Then
        $\eta_\liealg{g} \colon \group{G}(\liealg{g})_\red \to
        \group{G}(\liealg{g}_\red)$ given by
        $[\lambda g]_{\group{G}}\mapsto \lambda [g]_\liealg{g}$, where
        $[g]_\liealg{g}$ denotes the equivalence class of $g$ in
        $\liealg{g}_\red$, is well-defined.  Indeed, $\eta_\liealg{g}$
        is just the $\lambda$-linear extension of the obvious identity
        $\liealg{g}_\Wobs / \liealg{g}_\Null = \liealg{g}_\red$.
        Moreover, $\eta_\liealg{g}$ is a group morphism, since
        $[\argument]_\liealg{g} \colon \liealg{g}_\Wobs \to
        \liealg{g}_\red$ is a morphism of DGLAs and $\bullet$ is given
        by sums of iterated brackets.
        Naturality follows directly.
    \item Let $\liealg{g} \in \CoisoDGLieAlgTriple$ be a coisotropic
        DGLA.  Define
        $\eta_\liealg{g} \colon \Def(\liealg{g})_\red \to
        \Def(\liealg{g}_\red)$ by
        $\left[ [\lambda g]_{\group{G}} \right]_\Def \mapsto \left[
            [\lambda g]_\MC \right]_{\group{G}}$ where
        $[\argument]_{\group{G}}$ denotes the equivalence class
        induced by the action of the gauge group, $[\argument]_\Def$
        denotes the equivalence class given by the equivalence
        relation on $\Def(\liealg{g})_\Wobs$ and $[\argument]_\MC$
        denotes the equivalence class given by the equivalence
        relation on $\MCset(\lambda\liealg{g}_\red\formal{\lambda})$.
        Now suppose that $[\lambda g']_{\group{G}}$ is another
        representative of $[[\lambda g]_{\group{G}}]_\Def$, hence
        $\lambda g \sim_\MC \lambda g'$ by \eqref{eq:GaugeEquivalence}
        and thus $\eta_\liealg{g}$ is well-defined.  For the
        injectivity let
        $[[\lambda g ]_\MC]_{\group{G}} = [[\lambda
        g']_\MC]_{\group{G}}$ be given.  Then
        $[\lambda g]_\MC = \lambda [h] \acts [\lambda g']_\MC =
        [\lambda h \acts_\Wobs \lambda g' ]_\MC$ for some
        $[h] \in \liealg{g}_\red\formal{\lambda}$ and therefore
        $[[\lambda g]_{\group{G}}]_\Def = [[\lambda
        g']_{\group{G}}]_\Def$.
        Naturality follows as above.
    \end{theoremlist}
\end{proof}

%
%

\section{Deformation Theory of Coisotropic Algebras}
\label{sec:DeformationTheoryOfCoisotropicAlgebras}

%
%

\subsection{Deformations of Coisotropic Algebras}
\label{subsec:DefCoisoAlgs}

In formal deformation quantization one is interested in algebras of
formal power series over a ring $\field{k}[[\lambda]]$,
e.g. $(\Cinfty(M)[[\lambda]], \star)$ as algebra over
$\mathbb{C}[[\lambda]]$ for a Poisson manifold $M$ with star product
$\star$. For this reason, we consider deformations of a coisotropic
algebra $\algebra{A}$ with respect to the augmented (coisotropic) ring
$\coisoField{k}\formal{\lambda} =
(\field{k}\formal{\lambda},\field{k}\formal{\lambda},0)$.  Given a
coisotropic $\field{k}$-algebra
$\algebra{A} \in \CoisoAlgTriple_\coisoField{k}$, we can define a
formal deformation to be a coisotropic
$\field{k}\formal{\lambda}$-algebra $\algebra{B}$ together with an
isomorphism
$\alpha \colon \cl(\algebra{B}) \longrightarrow \algebra{A}$.  Here
$\cl(\algebra{B})$ denotes the classical limit as introduced in
\cite{dippell.esposito.waldmann:2019a}: The classical limit of a
coisotropic $\field{k}\formal{\lambda}$-algebra $\algebra{B}$ is the
coisotropic $\field{k}$-algebra defined as
$\cl(\algebra{B}) =\algebra{B}/\lambda\algebra{B} =
(\algebra{B}_\Total/\lambda\algebra{B}_\Total, \algebra{B}_\Wobs /
\lambda\algebra{B}_\Wobs, \algebra{B}_\Null /
\lambda\algebra{B}_\Wobs)$.  It is easy to see that this definition
agrees with the one from deformation via Artin rings, see
e.g. \cite{manetti:2009a}.  Usually, one is interested in more
specific deformations, namely those that are e.g. free
$\field{k}$-modules.  This leads us to the following definition:
\begin{definition}[Deformation of coisotropic algebra]
    \label{def:DeformationOfCoisoAlgebra}%
    Let $\algebra{A} \in \CoisoAlgTriple_\coisoField{k}$ be a
    coisotropic algebra.  A \emph{(formal associative) deformation} of
    $\algebra{A}$ is given by an associative multiplication
    $\mu \colon \algebra{A}\formal{\lambda} \tensor
    \algebra{A}\formal{\lambda} \longrightarrow
    \algebra{A}\formal{\lambda}$ on $\algebra{A}\formal{\lambda}$
    turning it into a coisotropic $\field{k}\formal{\lambda}$-algebra
    such that
    $\cl(\algebra{A}\formal{\lambda},\mu) \simeq \algebra{A}$.
\end{definition}

Let us comment on this definition. First recall that we have
\begin{equation}
    \label{eq:APowerSeries}
    \algebra{A}\formal{\lambda}
    =
    \left(
        \algebra{A}_\Total\formal{\lambda},
        \algebra{A}_\Wobs\formal{\lambda},
        \algebra{A}_\Null\formal{\lambda}
    \right)
\end{equation}
with the structure map
$\iota_{\algebra{A}\formal{\lambda}} = \iota_{\algebra{A}}$ being just
the $\lambda$-linear extension of the previous map according to
\eqref{eq:EPowerSeries}. Then we have two formal associative
deformations $\mu_\Total$ and $\mu_\Wobs$ for
$\algebra{A}_\Total\formal{\lambda}$ and
$\algebra{A}_\Wobs\formal{\lambda}$ of the form
$\mu_\Total = (\mu_\Total)_0 + \lambda (\mu_\Total)_1 +
\lambda^2(\dotsc)$ and
$\mu_\Wobs = (\mu_\Wobs)_0 + \lambda (\mu_\Wobs)_1 +
\lambda^2(\dotsc)$, respectively, such that the \emph{undeformed} map
$\iota_{\algebra{A}}$ is an algebra homomorphism and such that
$\algebra{A}_\Null\formal{\lambda}$ is a two-sided ideal in
$\algebra{A}_\Wobs\formal{\lambda}$ with respect to $\mu_\Wobs$.  Note
that we insist on the $\algebra{A}_\Wobs$ and $\algebra{A}_\Null$
being the \emph{same} up to taking formal series. Also the algebra
morphism $\iota_{\algebra{A}}$ is \emph{not} deformed.

One particular scenario we will be interested in the context of
deformation quantization of phase space reduction is the following:
\begin{example}
    \label{example:DeformationCTriple}%
    For convenience, we will assume that $\field{k}$ is actually a
    field and not just a ring.  Let
    $\algebra{A} = (\algebra{A}_\Total \supseteq \algebra{A}_\Wobs
    \supseteq \algebra{A}_\Null)$ be a coisotropic triple such that
    $\algebra{A}_\Null \subseteq \algebra{A}_\Total$ is a left ideal
    and $\algebra{A}_\Wobs \subseteq \normalizer(\algebra{A}_\Null)$
    is a unital subalgebra of the normalizer of this left ideal. In
    particular, $\iota_{\algebra{A}}$ is just the inclusion. Consider
    now a formal associative deformation $\mu_\Total$ of
    $\algebra{A}_\Total$ with the additional property that the formal
    series $\algebra{A}_\Null\formal{\lambda}$ are still a left ideal
    inside $\algebra{A}_\Total\formal{\lambda}$ with respect to
    $\mu_\Total$. Moreover, assume that the normalizer
    $\qalgebra{A}_\Wobs =
    \normalizer_{\mu_\Total}(\algebra{J}\formal{\lambda}) \subseteq
    \algebra{A}_\Total\formal{\lambda}$ with respect to $\mu_\Total$
    satisfies
    \begin{equation}
        \label{eq:ConditionqWobs}
        \cl(\qalgebra{A}_\Wobs) \subseteq \algebra{A}_\Wobs.
    \end{equation}
    This would be automatically true if $\algebra{A}_\Wobs$ coincides
    with the undeformed normalizer but poses an additional condition
    otherwise.

    It is now easy to check that
    $\qalgebra{A}_\Wobs \subseteq \algebra{A}_\Total\formal{\lambda}$
    is a \emph{closed} subspace with respect to the $\lambda$-adic
    topology. Moreover, if $\lambda a \in \qalgebra{A}_\Wobs$ for some
    $a \in \algebra{A}_\Total[[\lambda]]$ we can conclude
    $a \in \qalgebra{A}\formal{\lambda}$. Hence
    $\qalgebra{A}_\Wobs \subseteq \algebra{A}\formal{\lambda}$ is a
    deformation of a subspace in the sense of
    \cite[Def.~30]{bordemann.herbig.waldmann:2000a}, i.e. we have a
    subspace $\algebra{D} \subseteq \algebra{A}_\Total$ and linear
    maps $q_r\colon \algebra{D} \longrightarrow \algebra{A}_\Total$
    for $r \in \mathbb{N}$ such that
    \begin{equation}
        \label{eq:qWobsAreDeformedSubspace}
        \qalgebra{A}_\Wobs = q (\algebra{D}\formal{\lambda})
    \end{equation}
    where $q = \iota_{\algebra{D}} + \sum_{r=1}^\infty \lambda^r q_r$
    with $\iota_{\algebra{D}}$ being the canonical inclusion of the
    subspace. By our assumption,
    $\algebra{D} \subseteq \algebra{A}_\Wobs$ but the inclusion could
    be proper. Moreover, since by our assumption
    $\algebra{A}_\Null\formal{\lambda} \subseteq
    \normalizer(\algebra{A}_\Null\formal{\lambda}) =
    \qalgebra{A}_\Wobs$, we have
    $\algebra{A}_\Null \subseteq \algebra{D}$.

    Since we work over a field, we can find a complement
    $\algebra{C} \subseteq \algebra{D}$ such that
    $\algebra{A}_\Null \oplus \algebra{C} = \algebra{D}$. This allows
    to redefine the maps $q_r$ to
    \begin{equation}
        \label{eq:qprimer}
        q'_r\at{\algebra{C}} = q_r\at{\algebra{C}}
        \quad
        \textrm{and}
        \quad
        q'_r\at{\algebra{A}_\Null} = 0.
    \end{equation}
    The resulting map $q'$ then satisfies
    $q'(\algebra{D}\formal{\lambda}) = \qalgebra{A}_\Wobs$ and
    $q'\at{\algebra{A}_\Null} = \id_{\algebra{A}_\Null}$. We can then
    use $q'$ to pass to a new deformation $\mu'_\Total$ of
    $\algebra{A}_\Total$ with the property that
    $\algebra{A}_\Null\formal{\lambda}$ is still a left ideal in
    $\algebra{A}_\Total\formal{\lambda}$ with respect to
    $\mu'_\Total$ and the normalizer of this left ideal is now given
    by
    $\algebra{D}\formal{\lambda} \subseteq
    \algebra{A}_\Total\formal{\lambda}$. It follows that
    $\mu'_\Total$ provides a deformation of the coisotropic triple
    $(\algebra{A}_\Total, \algebra{D}, \algebra{A}_\Null)$ in the
    sense of Definition~\ref{def:DeformationOfCoisoAlgebra}.

    Of course, it might happen that
    $\algebra{D} \ne \algebra{A}_\Wobs$ and hence this construction
    will not provide a deformation of the original coisotropic triple,
    in general. It turns out that this can be controlled as follows:
    we assume in addition that the deformed normalizer
    $\qalgebra{A}_\Wobs$ is \emph{large enough} in the sense that the
    classical limit
    \begin{equation}
        \label{eq:LargeEnoughNormalizer}
        \cl\colon \qalgebra{A}_\red
        =
        \qalgebra{A}_\Wobs \big/ (\algebra{A}_\Null\formal{\lambda})
        \longrightarrow
        \algebra{A}_\red = \algebra{A}_\Wobs \big/ \algebra{A}_\Null
    \end{equation}
    between the reduced algebras is \emph{surjective}. As $\field{k}$
    is a field, this gives us a split
    $Q\colon \algebra{A}_\red \longrightarrow \qalgebra{A}_\red$ which
    we can extend $\lambda$-linearly to
    \begin{equation}
        \label{eq:QuantizedTheReduction}
        Q\colon
        \algebra{A}_\red\formal{\lambda}
        \longrightarrow
        \qalgebra{A}_\red.
    \end{equation}
    It is then easy to see that this is in fact a
    $\field{k}\formal{\lambda}$-linear isomorphism. It follows, that
    in this case we necessarily have
    \begin{equation}
        \label{eq:DisAWobs}
        \algebra{D} = \algebra{A}_\Wobs.
    \end{equation}
    Thus the previous construction gives indeed a deformation
    $\mu'_\Total$ of the original coisotropic triple. This seemingly
    very special situation will turn out to be responsible for one of
    the main examples from deformation quantization.
\end{example}

%
%

\subsection{Coisotropic Hochschild Cohomology}
\label{sec:CoisotropicHochschildCohomology}

From now on we assume that $\field{Q} \subseteq \field{k}$.
Let $\module{M}, \module{N} \in \CoisoModTriple_\coisoField{k}$ be
coisotropic modules.  We define for any $n \in \Naturals$
\begin{equation}
    \label{eq:CnDef}
    \mathrm{C}^n(\module{M}, \module{N})
    =
    \CoisoHom_\coisoField{k}(\module{M}^{\tensor n},\module{N})
\end{equation}
with $\CoisoHom_\coisoField{k}$ denoting the internal Hom as usual.
Recall that
\begin{align*}
    \mathrm{C}^n(\module{M},\module{N})_\Total
    &=
    \Hom_\field{k}(\module{M}_\Total^{\tensor n}, \module{N}_\Total),
    \\
    \mathrm{C}^n(\module{M},\module{N})_\Wobs
    &=
    \Hom_\coisoField{k}(\module{M}^{\tensor n},\module{N}),
    \\
    \mathrm{C}^n(\module{M},\module{N})_\Null
    &=
    \big\{
    (f_\Total,f_\Wobs)
    \in
    \mathrm{C}^n(\module{M}^{\tensor n},\module{N})
    \; \big| \;
    f_\Wobs(\module{M}^{\tensor n}_\Wobs) \subseteq \module{N}_\Null
    \big\}
\end{align*}
with
$\iota_n \colon \mathrm{C}^n(\module{M},\module{N})_\Wobs \ni
(f_\Total,f_\Wobs) \mapsto f_\Total \in
\mathrm{C}^n(\module{M},\module{N})_\Total$.  Note that a morphism
$f = (f_\Total, f_\Wobs) \in
\mathrm{C}^n(\module{M},\module{N})_\Wobs$ fulfils
$f_\Wobs((\module{M}^{\tensor n})_\Null) \subseteq \module{N}_\Null$
where, by definition of the tensor product, we have
\begin{equation}
    \label{eq:NullOfTensorPower}
    (\module{M}^{\tensor n})_\Null
    =
    \sum_{i=1}^{n}
    \module{M}^{\tensor i-1}_\Wobs
    \tensor
    \module{M}_\Null
    \tensor
    \module{M}_\Wobs^{\tensor n-i}.
\end{equation}
In other words, $f_\Wobs$ maps to $\module{N}_\Null$ if at least one
tensor factor comes from $\module{M}_\Null$.  This clearly defines a
graded coisotropic $\field{k}$-module
$\mathrm{C}^\bullet(\module{M},\module{N})$.

Let us now consider the case $\module{N} = \module{M}$.  Then we write
$\mathrm{C}^\bullet(\module{M}) =
\mathrm{C}^\bullet(\module{M},\module{M})$.  We now want to transfer
the Gerstenhaber algebra structure of the classical Hochschild complex
to $\mathrm{C}^\bullet(\module{M})$.  For this denote by
$[\argument,\argument]^{\module{M}_\Total}$ and
$[\argument,\argument]^{\module{M}_\Wobs}$ the Gerstenhaber brackets
for the modules $\module{M}_\Total$ and $\module{M}_\Wobs$,
respectively.  Then we need to show that
$[\argument,\argument]^{\module{M}_\Wobs}$ preserves the
$\NULL$-components.  This follows directly from the usual formula for
the Gerstenhaber bracket, see \cite{gerstenhaber:1963a}.
\begin{definition}[Gerstenhaber bracket]
    \label{definition:CoisoGerstenhaberBracket}%
    Let $\module{M} \in \CoisoModTriple_\coisoField{k}$.  Then the
    morphism
    $[\argument, \argument] \colon \mathrm{C}^\bullet(\module{M})
    \tensor \mathrm{C}^\bullet(\module{M}) \to
    \mathrm{C}^\bullet(\module{M})$ of coisotropic $\field{k}$-modules
    defined by
    $[\argument, \argument] =
    ([\argument,\argument]^{\module{M}_\Total},
    ([\argument,\argument]^{\module{M}_\Total},
    [\argument,\argument]^{\module{M}_\Wobs}))$ is called the
    \emph{coisotropic Gerstenhaber bracket}.
\end{definition}
Since $[\argument, \argument]^{\module{M}_\Total}$ and
$[\argument, \argument]^{\module{M}_\Wobs}$ induce graded Lie algebra
structures on the classical Hochschild complexes of
$\module{M}_\Total$ and $\module{M}_\Wobs$ it is easy to see that
$\mathrm{C}^\bullet(\module{M})$ together with the coisotropic
Gerstenhaber bracket $[\argument, \argument]$ forms a graded
coisotropic Lie algebra.

\begin{remark}
    \label{remark:GerstenhaberFromPreLie}%
    The coisotropic Gerstenhaber bracket can also be derived from a
    coisotropic pre-Lie algebra structure on
    $\mathrm{C}^\bullet(\module{M})$, which in turn results from a
    sort of partial composition.  These partial compositions can be
    interpreted as the usual endomorphism operad structure of
    $\module{M}$ in $\CoisoModTriple_\coisoField{k}$.  The theory of
    operads in the (non-abelian) category
    $\CoisoModTriple_\coisoField{k}$ will be the subject of a future
    project.
\end{remark}

As in the standard theory of deformation of associative algebras, we
can characterize associative multiplications by using the Gerstenhaber
bracket.
\begin{lemma}
    \label{lem:AssoCoisotropicMultiplication}%
    Let $\module{M} \in \CoisoModTriple_\coisoField{k}$ be a
    coisotropic module.  Then a morphism
    $\mu \colon \module{M} \tensor \module{M} \longrightarrow
    \module{M}$ of coisotropic $\field{k}$-modules is an associative
    coisotropic algebra structure on $\module{M}$ if and only if
    \begin{equation}
	\label{eq:AssociativityViaMC}
	[\mu, \mu]_\Wobs = 0.
    \end{equation}
\end{lemma}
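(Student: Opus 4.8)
The plan is to reduce the claim to the classical Gerstenhaber identity, applied separately to the two component multiplications, after noting that all remaining coisotropic-algebra axioms are already guaranteed by $\mu$ being a morphism of coisotropic modules. First I would unwind the hypothesis: writing $\mu = (\mu_\Total, \mu_\Wobs)$, the compatibility condition from \autoref{definition:CoisoTriplesModules} reads $\iota_\module{M} \circ \mu_\Wobs = \mu_\Total \circ (\iota_\module{M} \tensor \iota_\module{M})$, which says that $\iota_\module{M}$ intertwines the two products and will therefore be an algebra homomorphism as soon as both products are associative. The second requirement, $\mu_\Wobs\big((\module{M} \tensor \module{M})_\Null\big) \subseteq \module{M}_\Null$, combined with the explicit form $(\module{M}\tensor\module{M})_\Null = \module{M}_\Wobs \tensor \module{M}_\Null + \module{M}_\Null \tensor \module{M}_\Wobs$ from \autoref{def:TensorProductCoisotropicModules}, says exactly that $\module{M}_\Null$ is a two-sided ideal in $\module{M}_\Wobs$ with respect to $\mu_\Wobs$. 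Hence the only axioms left to check are the associativity of $\mu_\Total$ and of $\mu_\Wobs$.

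Next I would identify $[\mu, \mu]_\Wobs$ with the pair of classical self-brackets. By \eqref{eq:CnDef} the space $\mathrm{C}^2(\module{M})_\Wobs = \Hom_\coisoField{k}(\module{M}^{\tensor 2}, \module{M})$ consists of pairs $(f_\Total, f_\Wobs)$, and by \autoref{definition:CoisoGerstenhaberBracket} the bracket acts on the $\Wobs$-slot componentwise through $([\argument,\argument]^{\module{M}_\Total}, [\argument,\argument]^{\module{M}_\Wobs})$. I would therefore record the identity
\begin{equation*}
    [\mu, \mu]_\Wobs
    =
    \big(
    [\mu_\Total, \mu_\Total]^{\module{M}_\Total},
    \;
    [\mu_\Wobs, \mu_\Wobs]^{\module{M}_\Wobs}
    \big),
\end{equation*}
so that $[\mu,\mu]_\Wobs = 0$ is equivalent to the simultaneous vanishing of both classical Gerstenhaber self-brackets. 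This is the step to handle carefully: an element of the $\Wobs$-component of the coisotropic Hochschild complex is itself a pair, so the single equation $[\mu,\mu]_\Wobs = 0$ secretly encodes two conditions at once and carries the $\Total$-data along automatically — this is precisely why no separate $\Total$-equation appears in the statement.

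Finally I would invoke the classical result of Gerstenhaber \cite{gerstenhaber:1963a}: for a $\field{k}$-module $V$ and $m \in \mathrm{C}^2(V)$ the self-bracket $[m,m]^V$ equals twice the associator $m \circ (m \tensor \id) - m \circ (\id \tensor m)$, which vanishes if and only if $m$ is associative; here the factor $2$ is harmless since $\field{Q} \subseteq \field{k}$ is in force. Applying this to $V = \module{M}_\Total$ with $m = \mu_\Total$ and to $V = \module{M}_\Wobs$ with $m = \mu_\Wobs$, the two components of the displayed identity become exactly the associativity of $\mu_\Total$ and of $\mu_\Wobs$. Combining with the first paragraph, $[\mu,\mu]_\Wobs = 0$ holds if and only if both component products are associative, i.e. if and only if $\mu$ endows $\module{M}$ with an associative coisotropic algebra structure. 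I do not expect a genuine obstacle beyond the index bookkeeping flagged above; the whole argument is the classical Maurer–Cartan characterization run in parallel on the two legs of the coisotropic complex.
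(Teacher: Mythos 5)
Your proposal is correct and follows essentially the same route as the paper: identify $\mu$ as a pair $(\mu_\Total,\mu_\Wobs) \in \mathrm{C}^2(\module{M})_\Wobs$, observe that $[\mu,\mu]_\Wobs$ is the pair of classical self-brackets, and invoke Gerstenhaber's classical criterion componentwise. The extra care you take in verifying that the remaining coisotropic-algebra axioms (the ideal property of $\module{M}_\Null$ and $\iota_\module{M}$ being an algebra homomorphism) come for free from $\mu$ being a morphism of coisotropic modules is left implicit in the paper but is a welcome addition.
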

\begin{proof}
    First, note that a coisotropic morphism
    $\mu\colon \module{M} \tensor \module{M} \to \module{M}$ is an
    element in $\mathrm{C}^2(\module{M})_\Wobs$ and hence consists of
    a pair $(\mu_\Total, \mu_\Wobs)$ and
    $[\argument, \argument]_\Wobs = ([\argument,
    \argument]^{\module{M}_\Total},
    [\argument,\argument]^{\module{M}_\Wobs})$.  From the classical
    theory for associative algebras we know that $\mu_\Total$ and
    $\mu_\Wobs$ are associative multiplications if and only if
    $[\mu_\Total,\mu_\Total]^{\module{M}_\Total} = 0$ and
    $[\mu_\Wobs, \mu_\Wobs]^{\module{M}_\Wobs} = 0$ holds.
\end{proof}

Note that \eqref{eq:AssociativityViaMC} only involves the
$\WOBS$-component of the coisotropic Gerstenhaber bracket
$[\argument, \argument]$.  Using the coisotropic structure of
$\mathrm{C}^2(\module{M})$ we get
$\iota_2(\mu) = \mu_\Total \in \mathrm{C}^2(\module{M})_\Total$, from
which directly $[\mu_\Total,\mu_\Total]_\Total = 0$ follows.

Let us now move from a module $\module{M}$ to an algebra
$(\algebra{A},\mu)$.  Then we can use the multiplication to construct
a differential on $\mathrm{C}^\bullet(\algebra{A})$.
\begin{proposition}[Coisotropic Hochschild differential]
    \label{proposition:CoisoHochschildDifferential}%
    Let $(\algebra{A}, \mu) \in \CoisoAlgTriple_\coisoField{k}$ be a
    coisotropic algebra.  Then the morphism
    $\delta\colon \mathrm{C}^\bullet(\algebra{A}) \to
    \mathrm{C}^{\bullet +1}(\algebra{A})$ of coisotropic
    $\field{k}$-modules, defined by its components
    \begin{equation}
        \label{eq:HochschildDef}
	\delta_\Total = -[\argument, \mu_\Total]_\Total
	\quad
        \textrm{ and }
        \quad
	\delta_\Wobs = -[\argument, \mu]_\Wobs,
    \end{equation}
    is a coisotropic chain map of degree 1 and $\delta^2 = 0$.
\end{proposition}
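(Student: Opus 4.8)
The plan is to recognise $\delta$ as minus the coevaluation of the coisotropic Gerstenhaber bracket at the distinguished element $\mu = (\mu_\Total, \mu_\Wobs) \in \mathrm{C}^2(\algebra{A})_\Wobs$, i.e. $\delta = -[\argument, \mu]$, and then to reduce each assertion to its two classical counterparts on $\algebra{A}_\Total$ and on $\algebra{A}_\Wobs$. Unwinding \autoref{definition:CoisoGerstenhaberBracket}, the components read
\begin{equation*}
    \delta_\Total = -[\argument, \mu_\Total]^{\algebra{A}_\Total},
    \qquad
    \delta_\Wobs = \big( -[\argument, \mu_\Total]^{\algebra{A}_\Total},\; -[\argument, \mu_\Wobs]^{\algebra{A}_\Wobs} \big),
\end{equation*}
so that on each entry $\delta$ is literally the ordinary Hochschild differential attached to $\mu_\Total$ and to $\mu_\Wobs$.

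First I would verify that $\delta$ is a well-defined morphism of coisotropic modules of degree $1$. The degree is immediate, since the bracket sends $\mathrm{C}^k \tensor \mathrm{C}^\ell$ to $\mathrm{C}^{k+\ell-1}$ and $\mu$ lies in $\mathrm{C}^2(\algebra{A})$. For the coisotropic structure the clean route is to use that the coisotropic Gerstenhaber bracket of \autoref{definition:CoisoGerstenhaberBracket} is itself a morphism of coisotropic modules; since $\mu$ lies in $\mathrm{C}^2(\algebra{A})_\Wobs$, its partial evaluation $[\argument, \mu]$ is again a coisotropic morphism by the coevaluation property (\autoref{sec:CoisotropicModules}, \ref{InternalHom}) combined with the symmetry \ref{item:SymmetricMonoidal}. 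The $\iota$-compatibility $\iota_\algebra{A} \circ \delta_\Wobs = \delta_\Total \circ \iota$ is then inherited from $\iota_\algebra{A} \circ \mu_\Wobs = \mu_\Total \circ \iota_\algebra{A}$ (valid because $\iota_\algebra{A}$ is an algebra homomorphism) and the $\iota$-compatibility of the argument.

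The only genuinely new content — and the step I expect to be the main obstacle — is that $\delta_\Wobs$ respects the $\NULL$-component, and more basically that $\delta_\Wobs f$ is a coisotropic morphism at all; both hinge on the two-sided ideal condition $\algebra{A}_\Wobs \cdot \algebra{A}_\Null, \, \algebra{A}_\Null \cdot \algebra{A}_\Wobs \subseteq \algebra{A}_\Null$. Here I would argue straight from the Hochschild formula. Evaluating $(\delta_\Wobs f_\Wobs)(a_0, \dots, a_n)$ on a tuple from $\algebra{A}_\Wobs$ that contains at least one factor in $\algebra{A}_\Null$, the two outer terms $a_0 \cdot f_\Wobs(a_1, \dots, a_n)$ and $\pm f_\Wobs(a_0, \dots, a_{n-1}) \cdot a_n$ land in $\algebra{A}_\Null$ by the ideal property, while every inner term feeds $f_\Wobs$ a tuple still containing a factor from $\algebra{A}_\Null$ (since $\algebra{A}_\Wobs$ is a subalgebra and $\algebra{A}_\Null$ absorbs products), hence lands in $\algebra{A}_\Null$ because $f_\Wobs$ is coisotropic. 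This shows $\delta_\Wobs f$ maps $(\algebra{A}^{\tensor n+1})_\Null$ into $\algebra{A}_\Null$, so $\delta_\Wobs f \in \mathrm{C}^{n+1}(\algebra{A})_\Wobs$. Running the same computation for $f \in \mathrm{C}^n(\algebra{A})_\Null$ on a tuple of elements of $\algebra{A}_\Wobs$ — so that $f_\Wobs(\algebra{A}_\Wobs^{\tensor n}) \subseteq \algebra{A}_\Null$ already — shows $\delta_\Wobs(\mathrm{C}^n(\algebra{A})_\Null) \subseteq \mathrm{C}^{n+1}(\algebra{A})_\Null$.

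Finally, for $\delta^2 = 0$ I would exploit that $\mu$ is an associative coisotropic multiplication. By \autoref{lem:AssoCoisotropicMultiplication} we have $[\mu, \mu]_\Wobs = 0$, while $[\mu_\Total, \mu_\Total]_\Total = 0$ by the remark following that lemma; hence $[\mu, \mu] = 0$ as an element of the graded coisotropic Lie algebra $\mathrm{C}^\bullet(\algebra{A})$. Because the $\Total$- and $\Wobs$-entries of the coisotropic Gerstenhaber bracket are the classical Gerstenhaber brackets on $\algebra{A}_\Total$ and $\algebra{A}_\Wobs$, the graded Jacobi identity holds entrywise and gives
\begin{equation*}
    \delta^2 = [[\argument, \mu], \mu] = \tfrac{1}{2}\big[\argument, [\mu, \mu]\big] = 0.
\end{equation*}
Since this vanishing is checked separately in each entry of the pair, $\delta^2 = 0$ holds as a morphism of coisotropic modules, and the proposition follows. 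As indicated, the classical Gerstenhaber calculus carries all of the computation except the $\NULL$-preservation, which is exactly where the ideal hypothesis on $\algebra{A}_\Null$ is used.
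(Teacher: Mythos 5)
Your proposal is correct and follows essentially the same route as the paper: both reduce degree, the differential property, and $\delta^2 = 0$ to the classical Hochschild theory of the components $(\algebra{A}_\Total, \mu_\Total)$ and $(\algebra{A}_\Wobs, \mu_\Wobs)$, check the $\iota$-compatibility directly, and obtain preservation of the $\NULL$-component from the fact that the Gerstenhaber bracket (equivalently, the Hochschild formula together with the two-sided ideal property of $\algebra{A}_\Null$) respects it. The only difference is one of detail: the paper cites the earlier observation that $[\argument,\argument]_\Wobs$ preserves $\NULL$-components, whereas you re-derive this term by term from the Hochschild formula, which is a correct expansion of the same argument.
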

\begin{proof}
    Since $\mu_\Total$ is an associative multiplication on
    $\algebra{A}_\Total$ we know that
    $\delta_\Total \colon \mathrm{C}^\bullet(\algebra{A}_\Total)
    \longrightarrow \mathrm{C}^{\bullet+1}(\algebra{A}_\Total)$ is a
    differential.  Moreover, it is clear that
    $\delta_\Wobs \colon \mathrm{C}^\bullet(\algebra{A})_\Wobs
    \longrightarrow \mathrm{C}^\bullet(\algebra{A})_\Wobs$ is also a
    differential and it preserves the $\NULL$-component by the
    definition of $[\argument, \argument]_\Wobs$.  Finally, we have
    for $(\Phi_\Total,\Phi_\Wobs) \in \mathrm{C}^n(\algebra{A})_\Wobs$
    that
    $(\delta_\Total \circ \iota_n)(\Phi_\Total,\Phi_\Wobs) =
    \delta_\Total(\Phi_\Total) =
    \iota_{n+1}(\delta_\Wobs((\Phi_\Total,\Phi_\Wobs)))$ holds, and
    hence $(\delta_\Total,\delta_\Wobs)$ is a coisotropic morphism.
\end{proof}

The coisotropic Hochschild differential can be interpreted as twisting
the coisotropic DGLA
$(\mathrm{C}^\bullet(\algebra{A}), [\argument, \argument],0)$ with the
Maurer-Cartan element $\mu \in \mathrm{C}^2(\algebra{A})_\Wobs$, but
with signs chosen in such a way that it corresponds to the usual
Hochschild differential.  More explicitly we have the following
result.
\begin{corollary}
    \label{corollary:HochschildExplicit}%
    Let $(\algebra{A},\mu) \in \CoisoAlgTriple_\coisoField{k}$ be a
    coisotropic algebra.  Then the coisotropic Hochschild differential
    $\delta \colon \mathrm{C}^\bullet(\algebra{A}) \longrightarrow
    \mathrm{C}^{\bullet +1}(\algebra{A})$ is given by
    $\delta = (\delta^{\algebra{A}_\Total},
    (\delta^{\algebra{A}_\Total},\delta^{\algebra{A}_\Wobs}))$, where
    $\delta^{\algebra{A}_\Total}$ and $\delta^{\algebra{A}_\Wobs}$
    denote the Hochschild differentials of the algebras
    $(\algebra{A}_\Total, \mu_\Total)$ and
    $(\algebra{A}_\Wobs, \algebra{A}_\Wobs)$, respectively.
\end{corollary}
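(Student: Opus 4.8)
The plan is to reduce the corollary to the classical Gerstenhaber identity, applied separately in each component, which expresses the Hochschild differential of an associative algebra as (minus) the Gerstenhaber bracket with its multiplication. The starting point is \autoref{definition:CoisoGerstenhaberBracket}: specialized to $\module{M} = \algebra{A}$, the coisotropic Gerstenhaber bracket has $\TOTAL$-component equal to the classical Gerstenhaber bracket $[\argument,\argument]^{\algebra{A}_\Total}$ on $\mathrm{C}^\bullet(\algebra{A}_\Total)$ and $\WOBS$-component equal to the pair $([\argument,\argument]^{\algebra{A}_\Total}, [\argument,\argument]^{\algebra{A}_\Wobs})$. Consequently both formulas in \eqref{eq:HochschildDef} are built solely from these two classical brackets, and the entire statement follows by unwinding them.

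First I would recall the classical fact: for any associative algebra $(B,\nu)$ the Hochschild differential satisfies $\delta^{B} = -[\argument, \nu]^{B}$, with exactly the sign convention used in \eqref{eq:HochschildDef}. Applied to $(\algebra{A}_\Total,\mu_\Total)$ this gives immediately $\delta_\Total = -[\argument,\mu_\Total]^{\algebra{A}_\Total} = \delta^{\algebra{A}_\Total}$, which is the $\TOTAL$-component of the asserted description.

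For the $\WOBS$-component, an element of $\mathrm{C}^n(\algebra{A})_\Wobs$ is a pair $(\phi_\Total,\phi_\Wobs)$, and the second bracket argument is $\mu = (\mu_\Total,\mu_\Wobs) \in \mathrm{C}^2(\algebra{A})_\Wobs$. Since the $\WOBS$-component of the coisotropic bracket acts as the pair of the two classical brackets, I would compute
\begin{align*}
\delta_\Wobs(\phi_\Total,\phi_\Wobs)
&= -[(\phi_\Total,\phi_\Wobs), \mu]_\Wobs\\
&= \big(-[\phi_\Total,\mu_\Total]^{\algebra{A}_\Total},\, -[\phi_\Wobs,\mu_\Wobs]^{\algebra{A}_\Wobs}\big)\\
&= \big(\delta^{\algebra{A}_\Total}(\phi_\Total),\, \delta^{\algebra{A}_\Wobs}(\phi_\Wobs)\big),
\end{align*}
the last step again invoking the classical identity, now for both $(\algebra{A}_\Total,\mu_\Total)$ and $(\algebra{A}_\Wobs,\mu_\Wobs)$. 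Together with the previous paragraph this yields the claimed expression $\delta = (\delta^{\algebra{A}_\Total}, (\delta^{\algebra{A}_\Total}, \delta^{\algebra{A}_\Wobs}))$.

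The only real obstacle is the bookkeeping of signs: one must verify that the convention $\delta^{B} = -[\argument,\nu]^{B}$ carries precisely the sign appearing in \eqref{eq:HochschildDef}, so that the bracket expression reproduces the standard Hochschild differential rather than its negative or a degree-shifted variant. No further work on the coisotropic side is needed, since \autoref{proposition:CoisoHochschildDifferential} has already established that $\delta$ is a well-defined coisotropic chain map compatible with $\iota$ and squaring to zero; the corollary is purely the explicit componentwise identification.
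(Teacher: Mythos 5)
Your proposal is correct and matches the paper's intended argument exactly: the paper states this corollary without proof precisely because it follows by unwinding \autoref{definition:CoisoGerstenhaberBracket} componentwise and invoking the classical identity $\delta^B = -[\argument,\nu]^B$ for each of $(\algebra{A}_\Total,\mu_\Total)$ and $(\algebra{A}_\Wobs,\mu_\Wobs)$, just as you do. Your reading of the $\WOBS$-component of the bracket as the pair of the two classical brackets, and your sign bookkeeping, are both in line with \eqref{eq:HochschildDef}.
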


From this explicit characterization of the coisotropic Hochschild
differential in terms of the classical Hochschild differentials it
becomes clear that
$(\mathrm{C}^\bullet(\algebra{A}),[\argument,\argument],\delta)$ is a
coisotropic DGLA.

\begin{definition}[Coisotropic Hochschild complex]
    \label{definition:CoisoHochschildComplex}%
    Let $(\algebra{A},\mu) \in \CoisoAlgTriple_\coisoField{k}$ be a
    coisotropic algebra.  The coisotropic DGLA
    $(\mathrm{C}^\bullet(\algebra{A}),[\argument,\argument],\delta)$
    is called the \emph{(coisotropic) Hochschild complex} of
    $\algebra{A}$.
\end{definition}

Assigning the Hochschild complex to a given (coisotropic) algebra is
not functorial on all of $\CoisoAlgTriple_\coisoField{k}$.  But if we
restrict ourselves to the subcategory
$\CoisoAlgTriple_\coisoField{k}^\times$ of coisotropic algebras with
invertible morphisms we get a functor
$\mathrm{C}^\bullet \colon \CoisoAlgTriple_\coisoField{k}^\times \to
\CoisoDGLieAlgTriple$ by mapping each coisotropic algebra to its
Hochschild complex and every algebra isomorphism
$\phi \colon \algebra{A} \to \algebra{B}$ to
$\mathrm{C}^\bullet(\phi) \colon \mathrm{C}^\bullet(\algebra{A}) \to
\mathrm{C}^\bullet(\algebra{B})$ given by
$\mathrm{C}^\bullet(\phi)(f) = \phi \circ f \circ (\phi^{-1})^{\tensor
  n}$ for $f \in \mathrm{C}^n(\algebra{A})_{\Total/\Wobs}$.  A similar
construction clearly also works for usual algebras.  We can now show
that this functor commutes with reduction up to an injective natural
transformation.
\begin{proposition}[Hochschild complex vs. reduction]
    \label{prop:HochschildVSReduction}%
    There exists an injective natural transformation
    $\eta \colon \red \circ \mathrm{C}^\bullet \Longrightarrow
    \mathrm{C}^\bullet \circ \red$, i.e.
    \begin{equation}
	\begin{tikzcd}
            \CoisoAlgTriple_\coisoField{k}^\times
            \arrow{r}{\mathrm{C}^\bullet}
            \arrow{d}[swap]{\red}
            & \CoisoDGLieAlgTriple
            \arrow{d}{\red}
            \arrow[Rightarrow]{dl}[swap]{\eta}\\
            \Algebras_\field{k}^\times
            \arrow{r}{\mathrm{C}^\bullet}
            &\DGLieAlg
	\end{tikzcd}
    \end{equation}
    commutes.
\end{proposition}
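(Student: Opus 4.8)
The plan is to build $\eta$ degreewise out of the canonical injective morphism of \autoref{rmk:ReductionOfInternalHom}, and then to check that it respects the DGLA structure and is natural. First I would observe that, since $\red$ is monoidal (\autoref{prop:Reduction}), it carries the tensor powers along, giving a canonical identification $(\algebra{A}^{\tensor n})_\red \cong (\algebra{A}_\red)^{\tensor n}$ in each degree. Applying \autoref{rmk:ReductionOfInternalHom} to $\module{E} = \algebra{A}^{\tensor n}$ and $\module{F} = \algebra{A}$ then furnishes an injective $\field{k}$-module morphism
\[
    \eta_\algebra{A}^n \colon
    \mathrm{C}^n(\algebra{A})_\red
    =
    \CoisoHom_\coisoField{k}(\algebra{A}^{\tensor n}, \algebra{A})_\red
    \hookrightarrow
    \Hom_\field{k}\big((\algebra{A}_\red)^{\tensor n}, \algebra{A}_\red\big)
    =
    \mathrm{C}^n(\algebra{A}_\red),
\]
sending a class $[(f_\Total, f_\Wobs)]$ to the map $[f_\Wobs]$ that $f_\Wobs$ induces on the quotients. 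Injectivity of each $\eta_\algebra{A}^n$ is exactly the content of that remark, so the whole component $\eta_\algebra{A}$ is injective.

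Next I would check that $\eta_\algebra{A}$ is a morphism of DGLAs. Write $\pi \colon \algebra{A}_\Wobs \to \algebra{A}_\red$ for the quotient map, a unital algebra homomorphism, and note that a representative $f_\Wobs$ of a class in $\mathrm{C}^n(\algebra{A})_\red$ satisfies $\pi \circ f_\Wobs = [f_\Wobs] \circ \pi^{\tensor n}$ by construction of $\eta$. By \autoref{corollary:HochschildExplicit} the reduced differential on $\mathrm{C}^\bullet(\algebra{A})_\red$ is the one induced by $\delta^{\algebra{A}_\Wobs}$; applying $\pi$ to the defining formula of $\delta^{\algebra{A}_\Wobs} f_\Wobs$ and using that $\pi$ is multiplicative yields $\pi \circ (\delta^{\algebra{A}_\Wobs} f_\Wobs) = (\delta^{\algebra{A}_\red} [f_\Wobs]) \circ \pi^{\tensor(n+1)}$, i.e.\ $\eta_\algebra{A}$ intertwines the differentials. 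Running the same computation on the partial compositions underlying the Gerstenhaber bracket (\autoref{definition:CoisoGerstenhaberBracket}) shows that $\eta_\algebra{A}$ also intertwines the brackets, so it is indeed a DGLA morphism.

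Finally, for naturality I would take an isomorphism $\phi \colon \algebra{A} \to \algebra{B}$ in $\CoisoAlgTriple_\coisoField{k}^\times$. Since $\mathrm{C}^\bullet(\phi)$ acts by $f \mapsto \phi \circ f \circ (\phi^{-1})^{\tensor n}$, its reduction is induced by $\phi_\Wobs$, which in turn descends to $\phi_\red$; a direct check on representatives then gives $\eta_\algebra{B} \circ \red(\mathrm{C}^\bullet(\phi)) = \mathrm{C}^\bullet(\phi_\red) \circ \eta_\algebra{A}$.

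The computations above are all the routine ones (multiplicativity of $\pi$ threaded through the differential and the insertion operations). I do not expect a real obstacle; the one point to flag is conceptual rather than computational, namely that $\eta_\algebra{A}$ is generally \emph{not} surjective: its image consists precisely of those Hochschild cochains on $\algebra{A}_\red$ that extend to coisotropic cochains $(f_\Total, f_\Wobs)$ on all of $\algebra{A}$, exactly as described for the reduced internal Hom in \autoref{rmk:ReductionOfInternalHom}. This is why one only obtains an injective natural transformation and not a natural isomorphism.
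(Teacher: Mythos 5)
Your proposal is correct and follows essentially the same route as the paper: the map is the same one, $[(f_\Total,f_\Wobs)] \mapsto [f_\Wobs]$ induced on quotients, with injectivity and well-definedness obtained by citing \autoref{rmk:ReductionOfInternalHom} together with monoidality of $\red$ rather than by the direct element check the paper performs. Your additional verification that $\eta_{\algebra{A}}$ intertwines the differentials and Gerstenhaber brackets is a point the paper's proof leaves implicit, and your closing remark on non-surjectivity matches the discussion surrounding \eqref{eq:HHReduction}.
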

\begin{proof}
    For every coisotropic algebra $\algebra{A}$ define
    $\eta_\algebra{A} \colon \mathrm{C}^\bullet(\algebra{A})_\red \to
    \mathrm{C}^\bullet(\algebra{A}_\red)$ by
    \begin{equation*}
        \eta_\algebra{A}([f])([a_1], \dotsc, [a_n])
        =
        \left[ f_\Wobs(a_1, \dotsc, a_n) \right].
    \end{equation*}
    for $[f]=[(f_\Total,f_\Wobs)] \in \mathrm{C}^n(\algebra{A})_\red$.
    First note that
    $\eta_\algebra{A}([f]) \colon \algebra{A}_\red^{\tensor n} \to
    \algebra{A}_\red$ is well defined since if $a_i \in \algebra{A}_0$
    for any $i = 1,\dotsc,n$ we have
    $f_\Wobs(a_1, \dotsc, a_n) \in \algebra{A}_\Null$ and hence
    $[f_\Wobs(a_1, \dotsc, a_n)] = 0$.  Moreover, $\eta_\algebra{A}$
    is well-defined since for $f \in \mathrm{C}^n(\algebra{A})_\Null$
    we have $f_\Wobs(a_1, \dotsc, a_n) \in \algebra{A}_\Null$ and thus
    $\eta([f]) = 0$.  To see that $\eta$ is indeed a natural
    transformation we need to show that for every isomorphism
    $\phi \colon \algebra{A} \to \algebra{B}$ we have
    $\eta_\algebra{B} \circ \mathrm{C}^\bullet(\phi)_\red =
    \mathrm{C}^\bullet([\phi]) \circ \eta_\algebra{A}$.  But it is
    clear after inserting the definitions.  Finally, suppose
    $\eta_\algebra{A}([f]) = \eta_\algebra{A}([g])$.  This means that
    $(f_\Wobs-g_\Wobs)(a_1, \dotsc, a_n) \in \algebra{A}_\Null$ and
    therefore $[f] = [g]$.  Thus $\eta_\algebra{A}$ is injective.
\end{proof}

Now let us turn to the cohomology of the Hochschild complex.
\begin{definition}[Coisotropic Hochschild cohomology]
    \label{definition:CoisoHCohom}%
    Let $(\algebra{A},\mu) \in \CoisoAlgTriple_\coisoField{k}$ be a
    coisotropic algebra.  The cohomology
    $\Hochschild^\bullet(\algebra{A}) = \ker \delta / \image \delta$
    of the Hochschild complex $\mathrm{C}^\bullet(A)$ is called the
    \emph{(coisotropic) Hochschild cohomology} of $\algebra{A}$.
\end{definition}
Using the definition of kernel, image and quotient in
$\CoisoModTriple_\coisoField{k}$ as given in
\autoref{sec:CoisotropicModules} \ref{Kernel}, \ref{CoisoImage},
\ref{Quotient} we can express the coisotropic Hochschild cohomology
more explicitly as follows.
\begin{lemma}
    \label{lem:CoisotropicHochschildCohomology}%
    The Hochschild cohomology of
    $\algebra{A} \in \CoisoAlgTriple_\coisoField{k}$ is given by
    \begin{align}
	\Hochschild^\bullet(\algebra{A})_\Total
	&=
        \Hochschild^\bullet(\algebra{A}_\Total),
        \\
	\Hochschild^\bullet(\algebra{A})_\Wobs
	&=
        \ker \delta_\Wobs / \image \delta_\Wobs,
        \\
        \shortintertext{and}
	\Hochschild^\bullet(\algebra{A})_\Null
	&=
        \ker (\delta_\Wobs\at{\Null}) / \image \delta_\Wobs
    \end{align}
    with
    \begin{align}
	\ker \delta_\Wobs^{n+1}
	&=
        \big\{
        (f_\Total, f_\Wobs) \in \mathrm{C}^{n+1}(\algebra{A})_\Wobs
	\; \big| \;
        \delta^{\algebra{A}_\Total} f_\Total = 0
	\textrm{ and }
        \delta^{\algebra{A}_\Wobs} f_\Wobs = 0
        \big\}
	\subseteq
        \ker \delta_{\algebra{A}_\Total}^{n+1}
        \times
        \ker \delta_{\algebra{A}_\Wobs}^{n+1},
        \\
	\image \delta_\Wobs^n
	&=
        \big\{
        (f_\Total, f_\Wobs) \in \mathrm{C}^{n+1}(\algebra{A})_\Wobs
	\; \big| \;
        \exists (g_\Total, g_\Wobs) \in \mathrm{C}^n(\algebra{A})_\Wobs:
	\delta^{\algebra{A}_\Total}g_\Total = f_\Total
	\textrm{ and }
	\delta^{\algebra{A}_\Wobs}g_\Wobs = f_\Wobs
        \big\},
        \\
        \shortintertext{and}
        \ker (\delta^n_\Wobs\at{\Null} )
	&=
        \big\{
        (f_\Total, f_\Wobs) \in \mathrm{C}^{n+1}(\algebra{A})_\Null
	\; \big| \;
        \delta^{\algebra{A}_\Total} f_\Total = 0
	\textrm{ and }
        \delta^{\algebra{A}_\Wobs} f_\Wobs = 0
        \big\}
	\subseteq
        \ker \delta_{\algebra{A}_\Total}^n
        \times
        \ker \delta_{\algebra{A}_\Wobs}^n.
	\end{align}
\end{lemma}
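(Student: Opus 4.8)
The plan is to specialize the general descriptions of kernel, image, and quotient in $\CoisoModTriple_\coisoField{k}$ (\autoref{sec:CoisotropicModules}, \ref{Kernel}, \ref{CoisoImage}, \ref{Quotient}) to the coisotropic Hochschild complex; the whole argument is an unwinding of these constructions. First I would recall from \autoref{corollary:HochschildExplicit} that the coisotropic Hochschild differential is $\delta = (\delta^{\algebra{A}_\Total}, (\delta^{\algebra{A}_\Total}, \delta^{\algebra{A}_\Wobs}))$, so that $\delta_\Total = \delta^{\algebra{A}_\Total}$ is the classical Hochschild differential of $\algebra{A}_\Total$, while $\delta_\Wobs$ acts by $\delta^{\algebra{A}_\Total}$ on the first and by $\delta^{\algebra{A}_\Wobs}$ on the second component of a pair in $\mathrm{C}^\bullet(\algebra{A})_\Wobs$.

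Applying the kernel formula \ref{Kernel} degreewise then produces the three displayed kernel expressions: the $\Total$-component is $\ker \delta^{\algebra{A}_\Total}$, the $\Wobs$-component consists of pairs $(f_\Total, f_\Wobs)$ with $\delta^{\algebra{A}_\Total} f_\Total = 0$ and $\delta^{\algebra{A}_\Wobs} f_\Wobs = 0$, and the $\Null$-component is the intersection of this $\Wobs$-kernel with $\mathrm{C}^\bullet(\algebra{A})_\Null$. Likewise the image formula \ref{CoisoImage} yields the stated description of $\image \delta_\Wobs$, a pair lying in the $\Wobs$-image precisely when it admits a componentwise preimage $(g_\Total, g_\Wobs)$. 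At this point I would record that $\image \delta^{i-1} \subseteq \ker \delta^i$ is a genuine coisotropic submodule, since $\delta^2 = 0$ holds in each component and, by \autoref{proposition:CoisoHochschildDifferential}, $\delta_\Wobs$ preserves the $\Null$-component; this legitimizes forming the quotient.

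It then remains to apply the quotient formula \eqref{eq:QuotientsCoisotropicModules} with $\module{E} = \ker \delta^i$ and $\module{E}' = \image \delta^{i-1}$. The $\Total$- and $\Wobs$-components give $\Hochschild^\bullet(\algebra{A}_\Total)$ and $\ker \delta_\Wobs / \image \delta_\Wobs$ at once. The one point that deserves care—really the sole place where the non-abelian nature of $\CoisoModTriple_\coisoField{k}$ enters—is the $\Null$-component. By \eqref{eq:QuotientsCoisotropicModules} the $\Null$-component of a quotient $\module{E}/\module{E}'$ is $\module{E}_\Null / \module{E}'_\Wobs$, so the denominator is the \emph{$\Wobs$-component} $\image \delta_\Wobs$ of the image and \emph{not} its $\Null$-component; this is exactly the phenomenon flagged in \autoref{remark:RegularImageAgain}, namely that the $\Null$-part of the denominator never contributes to the quotient. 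Substituting $\module{E}_\Null = \ker(\delta_\Wobs\at{\Null})$ and $\module{E}'_\Wobs = \image \delta_\Wobs$ yields the claimed identity $\Hochschild^\bullet(\algebra{A})_\Null = \ker(\delta_\Wobs\at{\Null}) / \image \delta_\Wobs$, completing the identification.
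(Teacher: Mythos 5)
Your proposal is correct and follows exactly the route the paper intends: the lemma is stated there without a separate proof, justified only by the preceding reference to the kernel, image, and quotient constructions in $\CoisoModTriple_\coisoField{k}$, and your unwinding of those definitions---including the key observation that the $\Null$-component of the quotient uses the $\Wobs$-component $\image\delta_\Wobs$ of the denominator, as flagged in \autoref{remark:RegularImageAgain}---is precisely that argument made explicit.
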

With this we can compute the zeroth and first Hochschild cohomology of
a given coisotropic algebra.  The following also shows that in low
degrees the interpretation of the coisotropic Hochschild cohomology is
analogous to that for usual algebras.
\begin{proposition}
    \label{prop:ZerothFirstHochschild}%
    Let $\algebra{A} \in \CoisoAlgTriple_\coisoField{k}$ be a
    coisotropic algebra.
    \begin{propositionlist}
    \item \label{prop:ZerothFirstHochschild_1}We have
        \begin{align}
            \Hochschild^0(\algebra{A})_\Total
            &=
            \Center(\algebra{A}_\Total), \\
            \Hochschild^0(\algebra{A})_\Wobs
            &=
            \big\{
            a \in \algebra{A}_\Wobs
            \; \big| \;
            a \in \Center(\algebra{A}_\Wobs)
            \textrm{ and }
            \iota_\algebra{A}(a) \in \Center(\algebra{A}_\Total)
            \big\},
            \\
            \shortintertext{and}
            \Hochschild^0(\algebra{A})_\Null
            &=
            \big\{
            a_0 \in \algebra{A}_\Null
            \; \big| \;
            a_0 \in \Center(\algebra{A}_\Wobs)
            \textrm{ and }
            \iota_\algebra{A}(a_0) \in \Center(\algebra{A}_\Total)
            \big\}.
	\end{align}
    \item \label{item:HHoneIsDer} We have
        \begin{align}
            \Hochschild^1(\algebra{A})_\Total
            &=
            \Der(\algebra{A}_\Total) / \InnDer(\algebra{A}_\Total),
            \\
            \Hochschild^1(\algebra{A})_\Wobs
            &=
            \Der(\algebra{A})_\Wobs
            /
            \big\{
            (D_\Total, D_\Wobs) \in \Der(\algebra{A})_\Wobs
            \; \big| \;
            \exists a \in \algebra{A}_\Wobs:
            D_\Total = [\argument, \iota_\algebra{A}(a)],
            D_\Wobs = [\argument, a]
            \big\},
            \\
            \shortintertext{and}
            \Hochschild^1(\algebra{A})_\Null
            &=
            \Der(\algebra{A})_\Null
            /
            \big\{
            (D_\Total, D_\Wobs) \in \Der(\algebra{A})_\Wobs
            \; \big| \;
            \exists a \in \algebra{A}_\Wobs:
            D_\Total = [\argument, \iota_\algebra{A}(a)],
            D_\Wobs = [\argument, a]
            \big\}.
	\end{align}
        Hence
        $\Hochschild^1(\algebra{A}) = \CoisoDer(\algebra{A}) /
        \CoisoInnDer(\algebra{A})$.
    \end{propositionlist}
\end{proposition}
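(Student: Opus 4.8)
The plan is to compute everything strand by strand, leaning on \autoref{lem:CoisotropicHochschildCohomology} together with \autoref{corollary:HochschildExplicit}, which express the coisotropic Hochschild cohomology entirely in terms of the classical Hochschild differentials $\delta^{\algebra{A}_\Total}$ and $\delta^{\algebra{A}_\Wobs}$. The starting point is to identify the degree-zero cochains: since $\algebra{A}^{\tensor 0} = \coisoField{k} = (\field{k},\field{k},0)$, evaluation at $1$ gives $\mathrm{C}^0(\algebra{A}) \cong \algebra{A}$ as a coisotropic module, with $\Null$-component corresponding to $\algebra{A}_\Null$. Under this identification the degree-zero differential is, up to the usual sign, $a \mapsto [\argument, a]$ in each strand, so that its kernel in each strand is the corresponding center.

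For \ref{prop:ZerothFirstHochschild_1} there is no image to divide by, hence $\Hochschild^0(\algebra{A}) = \ker \delta^0$. The $\Total$-strand is classical, giving $\Center(\algebra{A}_\Total)$. For the $\Wobs$-strand, \autoref{lem:CoisotropicHochschildCohomology} describes $\ker \delta_\Wobs^0$ as those pairs $(a_\Total, a_\Wobs)$ with $a_\Total = \iota_\algebra{A}(a_\Wobs)$ for which both $\delta^{\algebra{A}_\Total} a_\Total = 0$ and $\delta^{\algebra{A}_\Wobs} a_\Wobs = 0$; translating the two classical kernels into centers yields exactly the stated condition $a \in \Center(\algebra{A}_\Wobs)$ and $\iota_\algebra{A}(a) \in \Center(\algebra{A}_\Total)$. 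The $\Null$-strand is $\ker(\delta_\Wobs^0\at{\Null})$, i.e.\ the same condition restricted to $a_0 \in \algebra{A}_\Null$, which is precisely the third formula.

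For \ref{item:HHoneIsDer} I would again split into kernel and image. By \autoref{lem:CoisotropicHochschildCohomology} the cocycles $\ker \delta_\Wobs^1$ are the pairs $(f_\Total, f_\Wobs)$ that are coisotropic morphisms with $f_\Total \in \Der(\algebra{A}_\Total)$ and $f_\Wobs \in \Der(\algebra{A}_\Wobs)$; comparing with \autoref{proposition:CosioDer} this is exactly $\CoisoDer(\algebra{A})_\Wobs = \Der(\algebra{A})_\Wobs$, and restricting to the $\Null$-condition $f_\Wobs(\algebra{A}_\Wobs) \subseteq \algebra{A}_\Null$ gives $\CoisoDer(\algebra{A})_\Null$. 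The coboundaries $\image \delta_\Wobs^0$ are, under the identification above, the pairs $(f_\Total, f_\Wobs) = ([\argument, \iota_\algebra{A}(a)], [\argument, a])$ for $a \in \algebra{A}_\Wobs$, which is precisely $\CoisoInnDer(\algebra{A})_\Wobs$ from \autoref{proposition:InnerDerivations}. The $\Total$-strand then reproduces the classical $\Der(\algebra{A}_\Total)/\InnDer(\algebra{A}_\Total)$, and the $\Wobs$- and $\Null$-strands give the two displayed quotients, whence $\Hochschild^1(\algebra{A}) = \CoisoDer(\algebra{A})/\CoisoInnDer(\algebra{A})$.

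The only genuinely non-routine point — and the one I would state carefully — is the treatment of the $\Null$-strand in the quotient. Following the quotient convention for coisotropic modules in \eqref{eq:QuotientsCoisotropicModules}, the $\Null$-component of $\ker\delta/\image\delta$ is the image of $\ker(\delta_\Wobs\at{\Null})$ inside $\ker\delta_\Wobs/\image\delta_\Wobs$, so the denominator is the full $\Wobs$-component $\image\delta_\Wobs^0 = \CoisoInnDer(\algebra{A})_\Wobs$ and not its $\Null$-part. This is exactly why the denominators in the $\Wobs$- and $\Null$-formulas of \ref{item:HHoneIsDer} coincide, and why they match the $\Null$-strand of $\CoisoDer(\algebra{A})/\CoisoInnDer(\algebra{A})$. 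Once this bookkeeping is in place the identification is immediate.
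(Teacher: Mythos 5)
Your proposal is correct and follows the same route as the paper's (very terse) proof: identify $\mathrm{C}^0(\algebra{A})\cong\algebra{A}$, read off kernels and images strand by strand from \autoref{lem:CoisotropicHochschildCohomology} using $\delta_{-1}=0$, and match cocycles and coboundaries in degree one with $\CoisoDer(\algebra{A})$ and $\CoisoInnDer(\algebra{A})$. Your explicit remark on the quotient convention \eqref{eq:QuotientsCoisotropicModules} for the $\Null$-strand is precisely the ``additional compatibility'' bookkeeping the paper alludes to but does not spell out.
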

\begin{proof}
    The first claim is clear by
    \autoref{lem:CoisotropicHochschildCohomology} and
    $\delta_{-1} = 0$. The second follows by carefully investigating
    the additional compatibility conditions beside the usual argument
    on derivations.
\end{proof}
\begin{remark}
    \label{remark:CenterAndOutsiders}%
    The \emph{center} of a coisotropic algebra could now be defined as
    $\Center(\algebra{A}) := \Hochschild^0(\algebra{A})$. Similarly,
    one can define the \emph{outer derivations} of a coisotropic
    algebra by $\OutDer(\algebra{A}) := \Hochschild^1(\algebra{A})$.
\end{remark}
\begin{remark}
    \label{remark:EnrichedStuff}%
    Using methods from enriched category theory one can define the
    center of a monoid $A$ internal to a given monoidal category
    $\category{C}$ by $[\category{C}, \category{C}](\id_A, \id_A)$.
    Here $[\category{C},\category{C}]$ denotes the
    $\category{C}$-enriched functor category of endofunctors of
    $\category{C}$.  Applying this to the monoidal category
    $\CoisoModTriple_\coisoField{k}$ yields exactly the notion of
    center of a coisotropic algebra as introduced in
    \autoref{remark:CenterAndOutsiders}.
\end{remark}
\begin{remark}
    \label{remark:HHandReduction}%
    Combining \autoref{prop:CohomologyCommutesWithReduction} with
    \autoref{prop:HochschildVSReduction} immediately shows that there
    exists an injective natural transformation
    $\eta \colon \red \circ \Hochschild^\bullet \Longrightarrow
    \Hochschild^\bullet \circ \red$.  In particular, for any
    coisotropic algebra $\algebra{A}$ we have
    \begin{equation}
        \label{eq:HHReduction}
        \Hochschild^\bullet(\algebra{A})_\red
        \subseteq
        \Hochschild^\bullet(\algebra{A}_\red).
    \end{equation}
\end{remark}
\begin{remark}
    \label{remark:HHOtherCoefficiants}%
    In this section we defined Hochschild cohomology of coisotropic
    algebras only with the algebra itself as coefficients.  It should
    be clear that all the above constructions also work for a
    coisotropic $\algebra{A}$-bimodule $\module{M}$ by using
    $\mathrm{C}^\bullet(\algebra{A},\module{M})$.
\end{remark}

%
%

\subsection{Formal Deformations}
\label{sec:FormalDeformations}

Throughout this section we will assume that the scalars satisfy
$\field{Q} \subseteq \field{k}$ in order to make use of the
description of deformations by Maurer-Cartan elements.

Let $(\algebra{A},\mu_0) \in \CoisoAlgTriple_\coisoField{k}$ be a
coisotropic associative $\field{k}$-algebra.  By
\autoref{def:DeformationOfCoisoAlgebra} a formal associative
deformation $(\algebra{A}\formal{\lambda},\mu)$ is given by an
associative multiplication
$\mu \colon \algebra{A}\formal{\lambda} \tensor
\algebra{A}\formal{\lambda} \longrightarrow
\algebra{A}\formal{\lambda}$ making $\algebra{A}\formal{\lambda}$ a
coisotropic $\field{k}\formal{\lambda}$-algebra such that
$\cl(\algebra{A},\mu)$ is given by $(\algebra{A},\mu_0)$, or in other
words
\begin{equation}
    \label{eq:MuIsFormalSeries}
    \mu = \mu_0 + \sum_{k=1}^{\infty} \lambda^k \mu_k
\end{equation}
with $\mu_k \colon \algebra{A} \tensor \algebra{A} \to \algebra{A}$.
Such deformations can now be understood as Maurer-Cartan elements in
the coisotropic DGLA
$\lambda \mathrm{C}^\bullet(\algebra{A})\formal{\lambda}$
corresponding to $(\algebra{A}\formal{\lambda},\mu_0)$.
\begin{lemma}
    \label{lemma:AssoDefIsMC}%
    Let $(\algebra{A},\mu) \in \CoisoAlgTriple_\coisoField{k}$ be a
    coisotropic associative $\field{k}$-algebra. A multiplication
    $\mu = \mu_0 + M$, with $M = \sum_{k=1}^{\infty} \lambda^k \mu_k$
    is a formal associative deformation of $\mu_0$ if and only if
    \begin{equation}
        \label{eq:MisMC}
        \delta M + \frac{1}{2}[M, M] = 0.
    \end{equation}
\end{lemma}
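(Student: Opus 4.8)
The plan is to reduce the statement to the coisotropic associativity criterion of \autoref{lem:AssoCoisotropicMultiplication} and then expand the Gerstenhaber bracket of $\mu = \mu_0 + M$. First I would observe that, because $M = \sum_{k\geq 1}\lambda^k \mu_k$ carries an explicit factor of $\lambda$, the multiplication $\mu = \mu_0 + M$ automatically has classical limit $\mu_0$, so $\cl(\algebra{A}\formal{\lambda},\mu)\simeq\algebra{A}$ holds by construction. Hence, by \autoref{def:DeformationOfCoisoAlgebra}, ``$\mu$ is a formal associative deformation of $\mu_0$'' is equivalent to ``$\mu$ is an associative coisotropic $\field{k}\formal{\lambda}$-algebra structure on $\algebra{A}\formal{\lambda}$''. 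The one genuinely coisotropic point, which I would make explicit before invoking the bracket criterion, is that the requirement that $\mu$ be a morphism of coisotropic modules $\algebra{A}\formal{\lambda}\tensor\algebra{A}\formal{\lambda}\to\algebra{A}\formal{\lambda}$, i.e. that $M\in\lambda\,\mathrm{C}^2(\algebra{A})_\Wobs\formal{\lambda}$, already encodes both that $\iota_\algebra{A}$ stays an (undeformed) algebra homomorphism and that $\algebra{A}_\Null\formal{\lambda}$ stays a two-sided ideal: by \eqref{eq:NullOfTensorPower}, membership in the $\Wobs$-component of $\mathrm{C}^2$ means exactly that $M_\Wobs$ sends $\algebra{A}_\Wobs\tensor\algebra{A}_\Null + \algebra{A}_\Null\tensor\algebra{A}_\Wobs$ into $\algebra{A}_\Null$.

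Given this, \autoref{lem:AssoCoisotropicMultiplication} applied to $\mu\in\mathrm{C}^2(\algebra{A}\formal{\lambda})_\Wobs$ says that $\mu$ is associative iff $[\mu,\mu]_\Wobs = 0$, and this single equation in the $\Wobs$-component (a pair of equations, for the $\Total$- and $\Wobs$-factors) captures associativity of both $\mu_\Total$ and $\mu_\Wobs$ at once. I would then expand by bilinearity of the coisotropic Gerstenhaber bracket,
\[
  [\mu_0+M,\,\mu_0+M]_\Wobs
  = [\mu_0,\mu_0]_\Wobs + [\mu_0,M]_\Wobs + [M,\mu_0]_\Wobs + [M,M]_\Wobs .
\]
Since $\mu_0$ is by hypothesis an associative coisotropic multiplication, \autoref{lem:AssoCoisotropicMultiplication} applied to the undeformed algebra gives $[\mu_0,\mu_0]_\Wobs = 0$. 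Because $\mu_0$ and $M$ both lie in $\mathrm{C}^2$, i.e. in degree $1$ of the shifted coisotropic DGLA $\mathrm{C}^\bullet(\algebra{A})$, the graded symmetry of the Gerstenhaber bracket on degree-$1$ elements yields $[\mu_0,M]_\Wobs = [M,\mu_0]_\Wobs$, so the two cross terms combine.

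Finally I would identify the combined cross term with the Hochschild differential. By \autoref{proposition:CoisoHochschildDifferential} the coisotropic differential of the undeformed algebra is $\delta = -[\argument,\mu_0]$, whose sign in \autoref{definition:CoisoGerstenhaberBracket} and \autoref{proposition:CoisoHochschildDifferential} is fixed precisely so as to reproduce the standard normalization; combining then turns the expansion into $[\mu,\mu]_\Wobs = 2\,\delta M + [M,M]_\Wobs$. Dividing by $2$ — here the hypothesis $\field{Q}\subseteq\field{k}$ is used — transforms $[\mu,\mu]_\Wobs = 0$ into the Maurer--Cartan equation $\delta M + \tfrac{1}{2}[M,M]_\Wobs = 0$, which is exactly \eqref{eq:MisMC}. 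The algebraic skeleton is just the classical Gerstenhaber computation carried out componentwise in the coisotropic DGLA; I expect the only real care to be bookkeeping — tracking that the whole argument lives in the $\Wobs$-component, with the $\Total$-identity following through $\iota_2$ as in \autoref{proposition:CoisoHochschildDifferential} — together with the sign/convention check ensuring the cross term appears as $+\delta M$ rather than $-\delta M$, which I regard as the main, if minor, obstacle.
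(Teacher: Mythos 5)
Your proof is correct and follows essentially the same route as the paper's: reduce to the bracket criterion of \autoref{lem:AssoCoisotropicMultiplication}, expand $[\mu_0+M,\mu_0+M]$ using the associativity of $\mu_0$ and the graded symmetry of the bracket in degree one, and identify the combined cross term with $2\delta M$ (the paper is no more explicit than you are about the sign convention). Your additional observations — that the classical-limit condition is automatic because $M$ carries a factor of $\lambda$, and that membership of $\mu$ in $\mathrm{C}^2(\algebra{A}\formal{\lambda})_\Wobs$ already encodes the undeformed $\iota_\algebra{A}$ being a homomorphism and $\algebra{A}_\Null\formal{\lambda}$ remaining an ideal — are points the paper leaves implicit.
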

\begin{proof}
    This can be seen very easily. By
    \autoref{lem:AssoCoisotropicMultiplication} we know that we have
    to check that $[\mu_\Total, \mu_\Total]_{\algebra{A}_\Total} = 0$
    and $[\mu_\Wobs, \mu_\Wobs]_{\algebra{A}_\Wobs} = 0$.  Thus,
    consider the total component of $\mu$ as
    $\mu_\Total = (\mu_0)_\Total + M_\Total$.  We have
    \begin{equation*}
        [\mu_\Total, \mu_\Total]
        =
        [(\mu_0)_\Total + M_\Total, (\mu_0)_\Total + M_\Total]
        =
        2 \delta M_\Total + [M_\Total, M_\Total],
    \end{equation*}
    where we used the associativity of ${\mu_0}_\Total$ and the graded
    skew-symmetry of Gerstenhaber bracket. The very same holds for the
    $\WOBS$-component.
\end{proof}

Let us now consider two formal associative deformations $\mu$ and
$\mu'$ of $(\algebra{A},\mu_0)$.  We say that they are
\emph{equivalent} if there exists
$T = \id + \lambda(\ldots) \in
\CoisoAut_{\coisoField{k}\formal{\lambda}}(\algebra{A}\formal{\lambda})$
such that $T \circ \mu = \mu' \circ (T \tensor T)$, i.e. we have
\begin{equation}
    \label{eq:EquivalenceInComponents}
    T_\Total(\mu_\Total (a,b))
    =
    \mu'_\Total (T_\Total(a), T_\Total(b))
    \quad
    \textrm{and}
    \quad
    T_\Wobs(\mu_\Wobs (a,b))
    =
    \mu'_\Wobs (T_\Wobs(a), T_\Wobs(b))
\end{equation}
for $a, b \in \algebra{A}_{\Total/\Wobs}$.  Thus, as in the case of
associative algebras, there exists a unique
$D = \sum_{k=0}^\infty \lambda^k D_k \in
\CoisoHom_\coisoField{k}(\algebra{A}\formal{\lambda},
\algebra{A}\formal{\lambda})$ such that $T = \exp(\lambda D)$.
This allows us to conclude the following claim.
\begin{lemma}
    \label{lem:equivalentDefs}%
    Two formal associative deformations $\mu$ and $\mu'$ of a
    coisotropic $\field{k}$-algebra $(\algebra{A},\mu_0)$ are
    equivalent if and only if there exists
    $D \in \CoisoHom_\coisoField{k}(\algebra{A}\formal{\lambda},
    \algebra{A}\formal{\lambda})$ such that
    \begin{equation}
	\label{eq:equivalentDefs}
        \E^{\lambda \ad (D)}(\mu) =\mu',
    \end{equation}
    where $\ad(D) = [D,\argument]$ using the coisotropic Gerstenhaber bracket.
\end{lemma}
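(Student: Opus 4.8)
The plan is to reduce the claim to the classical Gerstenhaber-theoretic fact that conjugating an associative multiplication by $T = \exp(\lambda D)$ agrees with the exponentiated adjoint action $\E^{\lambda\ad(D)}$, and then to observe that in the coisotropic setting this argument simply runs in each component. Note that the right-hand side of \eqref{eq:equivalentDefs} is exactly the gauge action of \autoref{prop:GaugeAction} applied to the coisotropic DGLA $(\mathrm{C}^\bullet(\algebra{A}), [\argument,\argument], 0)$ with trivial differential, for which the second summand in \eqref{eq:ActsWobs} drops out.

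First I would unwind the definition of equivalence: by \eqref{eq:EquivalenceInComponents}, $\mu$ and $\mu'$ are equivalent if and only if $T \circ \mu = \mu' \circ (T \tensor T)$ for $T = \exp(\lambda D)$ with $D \in \CoisoHom_\coisoField{k}(\algebra{A}\formal{\lambda}, \algebra{A}\formal{\lambda})$, and since $T$ is invertible this is the same as $\mu' = T \circ \mu \circ (T^{-1} \tensor T^{-1})$. Hence the entire lemma reduces to the single identity
\[
    T \circ \mu \circ (T^{-1} \tensor T^{-1}) = \E^{\lambda\ad(D)}(\mu),
\]
from which both directions of the equivalence follow at once, by setting $T := \exp(\lambda D)$ or by reading off $D$ from a given $T$ as explained before the statement.

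Next I would pass to components. By \autoref{definition:CoisoGerstenhaberBracket} the coisotropic Gerstenhaber bracket has $\WOBS$-component $[\argument,\argument]_\Wobs = ([\argument,\argument]^{\algebra{A}_\Total}, [\argument,\argument]^{\algebra{A}_\Wobs})$, so $\ad(D)$ acts on $\mu = (\mu_\Total, \mu_\Wobs)$ componentwise as $(\ad(D_\Total), \ad(D_\Wobs))$; consequently the displayed identity holds in $\mathrm{C}^2(\algebra{A})_\Wobs$ if and only if its two components $\E^{\lambda\ad(D_\Total)}(\mu_\Total) = T_\Total \circ \mu_\Total \circ (T_\Total^{-1} \tensor T_\Total^{-1})$ and the corresponding $\WOBS$-equation both hold, each being the classical identity for an ordinary associative algebra. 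Because $D$ is a coisotropic morphism we have $D_\Total \circ \iota_\algebra{A} = \iota_\algebra{A} \circ D_\Wobs$, so $T = (\exp(\lambda D_\Total), \exp(\lambda D_\Wobs))$ is a coisotropic automorphism and the two component identities automatically assemble into a coisotropic one; no extra compatibility across $\iota_\algebra{A}$ needs to be checked.

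The remaining work, and the only real computation, is the classical identity in a fixed component, which I would prove by a one-parameter argument. For the family $\Psi_t := \E^{t\lambda D} \circ \mu \circ (\E^{-t\lambda D} \tensor \E^{-t\lambda D})$ one has $\Psi_0 = \mu$, and differentiating in $t$ while using that $\E^{\pm t\lambda D}$ commutes with $D$ gives $\dot{\Psi}_t = \lambda[D, \Psi_t]$, which is exactly the linear $\lambda$-adic ODE solved by $t \mapsto \E^{t\lambda\ad(D)}(\mu)$ with the same initial value; uniqueness of formal solutions then forces $\Psi_1 = \E^{\lambda\ad(D)}(\mu)$. The hard part is purely a matter of conventions: one must match the slots and Koszul signs of the Gerstenhaber circle product so that $[D,\argument]$ reproduces the map $(a,b) \mapsto D(\mu(a,b)) - \mu(D(a),b) - \mu(a,D(b))$, which is precisely the derivative of conjugation at $t=0$. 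Once this sign bookkeeping is pinned down the coisotropic statement follows verbatim from the two component statements.
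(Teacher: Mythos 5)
Your proposal is correct and follows essentially the same route as the paper, which likewise reduces the statement to the componentwise classical identity $\E^{\lambda\ad(D_{\scriptscriptstyle\mathrm{T/N}})}(\mu_{\scriptscriptstyle\mathrm{T/N}}) = T_{\scriptscriptstyle\mathrm{T/N}} \circ \mu_{\scriptscriptstyle\mathrm{T/N}} \circ (T_{\scriptscriptstyle\mathrm{T/N}}^{-1}\tensor T_{\scriptscriptstyle\mathrm{T/N}}^{-1})$ via the correspondence $T=\exp(\lambda D)$ set up just before the lemma, the coisotropic compatibility being automatic since $D$ is a coisotropic morphism. The only addition is your one-parameter ODE derivation of the classical conjugation identity, which the paper takes as known ("as in the case of associative algebras") and which you carry out correctly.
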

Note that \eqref{eq:equivalentDefs} is equivalent to
\begin{align}
    \E^{\lambda \ad_\Total (D_\Total)}(\mu_\Total)
    &=
    \mu'_\Total
    \\
    \E^{\lambda \ad_\Wobs (D_\Wobs)}(\mu_\Wobs)
    &=
    \mu'_\Wobs.
\end{align}
Summing up the above lemmas we can state the relation between formal
deformations and the deformation functor.
\begin{theorem}[Equivalence classes of deformations]
    \label{theorem:EquivalenceClassesOfDefs}%
    Let $\field{k}$ be a commutative ring with
    $\field{Q} \subseteq \field{k}$.  Let $(\algebra{A},\mu_0)$ be a
    coisotropic $\field{k}$-algebra.  Then the set of equivalence
    classes of formal associative deformations of $\algebra{A}$
    coincides with $\Def(\mathrm{C}^\bullet(\algebra{A}))$, where
    $\mathrm{C}^\bullet(\algebra{A})$ is the coisotropic Hochschild
    DGLA of $\algebra{A}$.
\end{theorem}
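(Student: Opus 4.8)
The plan is to read the statement off from \autoref{lemma:AssoDefIsMC} and \autoref{lem:equivalentDefs}, the only genuine work being to check that the resulting dictionary respects the coisotropic-set structure of $\Def(\mathrm{C}^\bullet(\algebra{A}))$. Throughout, write $\liealg{g} = \mathrm{C}^\bullet(\algebra{A})$ for the coisotropic Hochschild DGLA of \autoref{definition:CoisoHochschildComplex}, whose degree-zero part is $\CoisoHom_\coisoField{k}(\algebra{A},\algebra{A})$ and whose differential $\delta$ is the $\mu_0$-twist from \autoref{proposition:CoisoHochschildDifferential}. First I would set up the bijection on underlying pairs of sets: by \autoref{lemma:AssoDefIsMC} a formal deformation $\mu = \mu_0 + M$ of the coisotropic algebra $\algebra{A}$ is the same datum as an element $M \in \lambda\mathrm{C}^2(\algebra{A})_\Wobs\formal{\lambda}$ satisfying the Maurer--Cartan equation \eqref{eq:MisMC}, hence a point of $\MCset(\lambda\liealg{g}\formal{\lambda})_\Wobs$; its $\Total$-component $\mu_\Total = (\mu_0)_\Total + M_\Total$ is a deformation of $\algebra{A}_\Total$ precisely when $M_\Total \in \MCset(\lambda\liealg{g}\formal{\lambda})_\Total$. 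Since $\iota_\MC$ is induced by the undeformed $\iota_\algebra{A} = \iota_\liealg{g}$, this identifies the pair of sets of deformations with the underlying pair of $\MCset(\lambda\liealg{g}\formal{\lambda})$ compatibly with the structure map $\iota$.

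Next I would match the equivalence by the gauge action. The gauge group $\group{G}(\liealg{g})$ of \autoref{prop:CoisoGaugeGroup} has underlying component sets $\lambda\CoisoHom_\coisoField{k}(\algebra{A},\algebra{A})\formal{\lambda}$, matching the parameters $D$ of \autoref{lem:equivalentDefs} via $T = \E^{\lambda D}$. For the $\mu_0$-twisted differential the gauge action of \autoref{prop:GaugeAction} satisfies the standard identity
\begin{equation*}
    (\mu_0)_{\Total/\Wobs} + \big(\lambda D \acts_{\Total/\Wobs} M\big)
    =
    \E^{\lambda \ad_{\Total/\Wobs}(D)}\big((\mu_0)_{\Total/\Wobs} + M\big)
\end{equation*}
in each of the two components separately, where on the right one uses the coisotropic Gerstenhaber bracket. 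Comparing with \eqref{eq:equivalentDefs}, two Maurer--Cartan elements lie in the same $\group{G}(\liealg{g})_\Total$- (resp. $\group{G}(\liealg{g})_\Wobs$-) orbit if and only if the corresponding deformations of $\algebra{A}_\Total$ (resp. of $\algebra{A}$) are equivalent in the sense of \eqref{eq:EquivalenceInComponents}. Thus the orbit sets $\Def(\liealg{g})_\Total$ and $\Def(\liealg{g})_\Wobs$ are exactly the sets of equivalence classes of deformations of $\algebra{A}_\Total$ and of $\algebra{A}$.

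Finally I would check the remaining coisotropic-set datum, the equivalence relation. By \eqref{eq:GaugeEquivalence} together with \eqref{eq:MCEquivalence} the relation on $\Def(\liealg{g})_\Wobs$ is induced by $M - M' \in \lambda\mathrm{C}^2(\algebra{A})_\Null\formal{\lambda}$; since $\mu - \mu' = M - M'$, this is precisely the relation declaring two deformations of $\algebra{A}$ equivalent exactly when their difference is a $\Null$-cochain, the natural relation on the coisotropic set of deformations. Together with the agreement of $\iota$ and of the orbit sets from the previous paragraphs (and recalling \autoref{proposition:DefCoisoSet} and \autoref{proposition:DefFunctor}, which guarantee $\Def(\liealg{g})$ really is a coisotropic set), this produces an isomorphism of coisotropic sets between the deformations of $\algebra{A}$ and $\Def(\mathrm{C}^\bullet(\algebra{A}))$.

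I expect the main obstacle to be the displayed identity of the second paragraph, relating the twisted gauge action of \autoref{prop:GaugeAction} to the adjoint conjugation $\E^{\lambda\ad(D)}(\mu)$ of \autoref{lem:equivalentDefs}; this is the classical computation (the sum over $\D D = \delta D = -[D,\mu_0]$ telescopes against $\E^{\lambda\ad(D)}(\mu_0)-\mu_0$) applied in each component. The only genuinely coisotropic point is that a \emph{single} gauge parameter $\lambda D \in \lambda\CoisoHom_\coisoField{k}(\algebra{A},\algebra{A})\formal{\lambda}$ drives both components and preserves $\liealg{g}_\Null$, so that the two classical statements glue to the coisotropic one; everything else is bookkeeping with the definitions.
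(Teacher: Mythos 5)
Your proposal is correct and follows essentially the same route as the paper's own (very terse) proof: identify deformations with Maurer--Cartan elements via \autoref{lemma:AssoDefIsMC} and equivalence with gauge orbits via \autoref{lem:equivalentDefs}. The extra details you supply --- the telescoping identity $\mu_0 + (\lambda D \acts M) = \E^{\lambda\ad(D)}(\mu_0 + M)$ and the check that a single coisotropic gauge parameter drives both components --- are exactly the computations the paper leaves implicit, and they are carried out correctly.
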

\begin{proof}
    By \autoref{lemma:AssoDefIsMC} we know that formal associative
    deformations of $\mu_0$ correspond to Maurer-Cartan elements of
    $\lambda \mathrm{C}^\bullet(\algebra{A})\formal{\lambda}$, while
    by \autoref{lem:equivalentDefs} two such deformations are
    equivalent if and only if they lie in the same
    $\group{G}(\mathrm{C}^\bullet(\algebra{A}))$-orbit.  Hence
    $\Def(\mathrm{C}^\bullet(\algebra{A}))$ is exactly the set of
    equivalence classes of formal deformations.
\end{proof}

Finally, we can reformulate the classical theorem about the extension
of a deformation up to a given order for coisotropic algebras.
\begin{theorem}[Obstructions]
    \label{theorem:Obstructions}%
    Let $\field{k}$ be a commutative ring with
    $\field{Q} \subseteq \field{k}$.  Let
    $(\algebra{A},\mu_0) \in \CoisoAlgTriple_\coisoField{k}$ be a
    coisotropic $\field{k}$-algebra.  Furthermore, let
    $\mu^{(k)} = \mu_0 + \cdots + \lambda^k \mu_k \in
    \mathrm{C}^2(\algebra{A})_\Wobs$ be an associative deformation of
    $\mu_0$ up to order $k$.  Then
    \begin{equation}
	\label{eq:thm:ExtendingDeformation}
	R_{k+1}
        =
        \left(
            \frac{1}{2} \sum_{\ell=1}^k
            \big[
            (\mu_\ell)_\Total,
            (\mu_{k+1-\ell})_\Total
            \big]^{\algebra{A}_\Total},
            \frac{1}{2} \sum_{\ell=1}^{k}
            \big[
            (\mu_\ell)_\Wobs,
            (\mu_{k+1-\ell})_\Wobs
            \big]^{\algebra{A}_\Wobs}
        \right)
	\in
        \mathrm{C}^3(\algebra{A})_\Wobs
    \end{equation}
    is a coisotropic Hochschild cocycle, i.e.
    $\delta_\Wobs R_{k+1} = 0$.  The deformation $\mu^{(k)}$ can be
    extended to order $k+1$ if and only if
    $R_{k+1}= \delta_\Wobs \mu_{k+1}$.  In this case every such
    $\mu_{k+1}$ yields an extension
    $\mu^{(k+1)} = \mu^{(k)} + \lambda^{k+1}\mu_{k+1}$.
\end{theorem}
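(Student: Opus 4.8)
The plan is to reduce the statement to the classical Gerstenhaber obstruction theory applied separately in the $\Total$- and $\Wobs$-components. By \autoref{corollary:HochschildExplicit} the coisotropic Hochschild DGLA $\mathrm{C}^\bullet(\algebra{A})$ has components that are precisely the ordinary Hochschild DGLAs of $\algebra{A}_\Total$ and $\algebra{A}_\Wobs$, with both the Gerstenhaber bracket and the differential $\delta_\Wobs = -[\argument, \mu_0]$ computed componentwise. Since $R_{k+1}$ in \eqref{eq:thm:ExtendingDeformation} is itself defined componentwise through the classical Gerstenhaber brackets $[\argument,\argument]^{\algebra{A}_\Total}$ and $[\argument,\argument]^{\algebra{A}_\Wobs}$, it suffices to prove both assertions inside each classical Hochschild complex and then verify that the coisotropic compatibility is preserved throughout.

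First I would expand the $\Wobs$-component of the Maurer--Cartan equation of \autoref{lemma:AssoDefIsMC}, namely $\delta_\Wobs M + \frac{1}{2}[M,M]_\Wobs = 0$ for $M = \sum_{i\ge 1}\lambda^i\mu_i$, order by order. The hypothesis that $\mu^{(k)}$ is associative up to order $k$ is exactly the statement that this equation holds modulo $\lambda^{k+1}$, equivalently $\delta_\Wobs\mu_i = R_i$ for all $1\le i\le k$, where $R_i = \frac{1}{2}\sum_{\ell=1}^{i-1}[\mu_\ell,\mu_{i-\ell}]$. Positing a further term $\lambda^{k+1}\mu_{k+1}$, its only contribution to the coefficient of $\lambda^{k+1}$ in the Maurer--Cartan equation is the term $\delta_\Wobs\mu_{k+1}$, while the remaining contribution is exactly $R_{k+1}$. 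Hence $\mu^{(k+1)} = \mu^{(k)} + \lambda^{k+1}\mu_{k+1}$ is associative up to order $k+1$ if and only if $\delta_\Wobs\mu_{k+1} = R_{k+1}$, which is the extension criterion of the theorem; any solution $\mu_{k+1} \in \mathrm{C}^2(\algebra{A})_\Wobs$ then yields a valid extension.

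The cocycle property $\delta_\Wobs R_{k+1} = 0$ is the standard Gerstenhaber computation, carried out in each component. Using $\delta_\Wobs = -[\argument,\mu_0]$ together with the graded Jacobi identity (every $\mu_i$ carrying shifted degree one), each summand $\big[[\mu_\ell,\mu_{k+1-\ell}],\mu_0\big]$ of $[R_{k+1},\mu_0]$ expands into iterated brackets $[\mu_\ell,[\mu_{k+1-\ell},\mu_0]]$ and $[\mu_{k+1-\ell},[\mu_\ell,\mu_0]]$. Substituting the lower-order relations $[\mu_i,\mu_0] = -R_i$ for $1\le i \le k$ turns $\delta_\Wobs R_{k+1}$ into a triple sum of iterated brackets of the $\mu$'s which cancels pairwise by graded antisymmetry after reindexing the summation. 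As this argument is purely formal, valid in any DGLA, it applies verbatim to both $\mathrm{C}^\bullet(\algebra{A}_\Total)$ and $\mathrm{C}^\bullet(\algebra{A}_\Wobs)$.

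The step that genuinely needs care --- and where I expect to spend most of the effort rather than copying the classical proof --- is the coisotropic bookkeeping. One must check that the pair $R_{k+1} = ((R_{k+1})_\Total, (R_{k+1})_\Wobs)$ is an honest element of $\mathrm{C}^3(\algebra{A})_\Wobs$, that is, a compatible pair intertwined by $\iota_{\algebra{A}}$; this holds because $\mu^{(k)}$, being a deformation of a coisotropic algebra, has components compatible at every order, and the coisotropic Gerstenhaber bracket is a morphism of coisotropic modules, so it respects both the $\iota$-compatibility and the $\Null$-component. Consequently the single equation $\delta_\Wobs R_{k+1} = 0$ encodes the two classical cocycle conditions simultaneously, and the criterion $R_{k+1} = \delta_\Wobs\mu_{k+1}$ is precisely the requirement that the coisotropic cocycle $R_{k+1}$ be a coisotropic coboundary, i.e.\ that a single $\mu_{k+1} \in \mathrm{C}^2(\algebra{A})_\Wobs$ trivialize $(R_{k+1})_\Total$ and $(R_{k+1})_\Wobs$ at once. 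Thus no phenomenon beyond the componentwise classical theory arises; the only real content is that bracket and differential act componentwise by \autoref{corollary:HochschildExplicit}, so all compatibility is automatically preserved.
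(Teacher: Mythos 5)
Your proposal is correct and follows essentially the same route as the paper: by \autoref{corollary:HochschildExplicit} the differential and bracket act componentwise, so both the cocycle property of $R_{k+1}$ and the extension criterion reduce to the classical Gerstenhaber obstruction theory applied separately to $(\algebra{A}_\Total,\mu_\Total)$ and $(\algebra{A}_\Wobs,\mu_\Wobs)$. You merely spell out more of the classical order-by-order Maurer--Cartan expansion and add the (correct, and in the paper implicit) check that $R_{k+1}$ is a genuine element of $\mathrm{C}^3(\algebra{A})_\Wobs$, i.e.\ $\iota$-compatible and preserving the $\Null$-component.
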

\begin{proof}
    By the classical deformation theory of associative algebras it is
    clear that \eqref{eq:thm:ExtendingDeformation} is closed since
    $\delta_\Wobs =
    (\delta^{\algebra{A}_\Total},\delta^{\algebra{A}_\Wobs})$.  If
    $R_{k+1}$ is exact, we know that $\mu^{(k)}_\Total$ and
    $\mu^{(k)}_\Wobs$ can be extended via $(\mu_{k+1})_\Total$ and
    $(\mu_{k+1})_\Wobs$, respectively.  Thus $\mu_{k+1}$ yields an
    extension of $\mu^{(k)}$.  On the other hand, if $\mu^{(k)}$ can
    be extended, we know that
    $(R_{k+1})_\Total = \delta^{\algebra{A}_\Total}
    (\mu_{k+1})_\Total$ and
    $(R_{k+1})_\Wobs = \delta^{\algebra{A}_\Wobs} (\mu_{k+1})_\Wobs$.
    Hence, $R_{k+1} = \delta_\Wobs \mu_{k+1}$.
\end{proof}

Thus the obstructions for deformations of an associative structure on
a coisotropic module are given by $\Hochschild^3(\algebra{A})_\Wobs$.
The coisotropic module $\Hochschild^3(\algebra{A})$ carries more
information than just the obstructions to deformations of the
coisotropic algebra $\algebra{A}$.  Since
$\Hochschild^3(\algebra{A})_\Total =
\Hochschild^3(\algebra{A}_\Total)$ it also encodes the obstructions of
deformations of the classical algebra $\algebra{A}_\Total$.  Moreover,
$\Hochschild^3(\algebra{A})_\Null$ is important for the reduction of
$\Hochschild^3(\algebra{A})$ and hence controls which obstructions on
$\algebra{A}$ descend to obstructions on $\algebra{A}_\red$.  In
particular, we have seen at the end of
\autoref{sec:CoisotropicHochschildCohomology} that
$\Hochschild^3(\algebra{A})_\red \subseteq
\Hochschild^3(\algebra{A}_\red)$.

%
%

\subsection{Example I: BRST Reduction}
\label{subsec:ExampleIBRST}

The above definition of a deformation of a coisotropic algebra
recovers the following two interesting examples from deformation
quantization. The first comes from BRST reduction of star products.

We recall the situation of \cite{bordemann.herbig.waldmann:2000a,
  gutt.waldmann:2010a}. Consider a Poisson manifold $M$ with a
strongly Hamiltonian action of a connected Lie group $G$ and momentum
map $J\colon M \longrightarrow \liealg{g}^*$, where $\liealg{g}$ is
the Lie algebra of $G$. One assumes that the classical level surface
$C = J^{-1}(\{0\}) \subseteq M$ is a non-empty (necessarily
coisotropic) submanifold by requiring $0$ to be a regular value of
$J$. Moreover, we assume that the action of $C$ is proper. Then we
have the classical coisotropic triple
\begin{equation}
	\algebra{A} := (\Cinfty(M), \algebra{B}_C, \algebra{J}_C),
\end{equation} where
$\algebra{J}_C = \ker \iota^* \subseteq \Cinfty(M)$ is the vanishing
ideal of the constraint surface $C \subseteq M$ and $\algebra{B}_C$
its Poisson normalizer. Next, we assume to have a star product $\star$
invariant under the action of $G$ which admits a deformation
$\deform{J}$ of $J$ into a quantum momentum map. In the symplectic
case such star products always exist since we assume the action of $G$
to be proper, see \cite{reichert.waldmann:2016a} for a complete
classification and further references. In the general Poisson case the
situation is less clear.

Out of this a coisotropic $\mathbb{C}\formal{\lambda}$-algebra
$\qalgebra{A} := (\Cinfty(M)\formal{\lambda}, \qalgebra{B}_C,
\qalgebra{J}_C)$ is then constructed, where
$\qalgebra{J}_C = \ker \deform{\iota}^* \subseteq
\Cinfty(M)\formal{\lambda}$ is the quantum vanishing ideal given by
the kernel of the deformed restriction
$\deform{\iota}^* = \iota^* \circ S$.  Here
$S = \id + \sum_{k=1}^{\infty} \lambda^k S_k$ is a formal power series
of differential operators guaranteeing that $\qalgebra{J}_C$ is indeed
a left ideal with respect to $\star$.  In fact, $S$ can be chosen to
be $G$-invariant.

We now want to construct a coisotropic algebra
$(\Cinfty(M)\formal{\lambda}, \algebra{B}_C\formal{\lambda},
\algebra{J}_C\formal{\lambda})$ which is isomorphic to
$(\Cinfty(M)\formal{\lambda}, \qalgebra{B}_C, \qalgebra{J}_C)$.  For
this, note that
$S\colon \Cinfty(M)\formal{\lambda} \longrightarrow
\Cinfty(M)\formal{\lambda}$ is invertible, hence we get a star product
\begin{equation}
	f \star^S g = S(S^{-1}f \star S^{-1}g)
\end{equation}
on $\Cinfty(M)\formal{\lambda}$.  From
$\deform{\iota}^* = \iota^* \circ S$ directly follows, that $S$ maps
$\qalgebra{J}_C$ to $\algebra{J}_C\formal{\lambda}$. It is slightly
less evident, but follows from the characterization of the normalizer
$\qalgebra{B}_C$ as those functions whose restriction to $C$ are
$G$-invariant, that $S$ maps the normalizer $\qalgebra{B}_C$ to the
normalizer $\qalgebra{B}^S_C$ of $\algebra{J}_C\formal{\lambda}$ with
respect to $\star^S$.  Finally, we know that $f \in \qalgebra{B}_C$ if
and only if for all $\xi \in \liealg{g}$ it holds that
$0 = \Lie_{\xi_C} \deform{\iota}^* f = \Lie_{\xi_C} \iota^*Sf$.  Hence
$f \in \qalgebra{B}_C$ if and only if
$Sf \in \algebra{B}_C\formal{\lambda}$.  Thus $S$ is an isomorphism of
coisotropic triples
\begin{equation}
	((\Cinfty(M)\formal{\lambda},\star), \qalgebra{B}_C, \qalgebra{J}_C)
	\overset{S}{\longrightarrow}
	((\Cinfty(M)\formal{\lambda},\star^S), \algebra{B}_C\formal{\lambda},
	\algebra{J}_C\formal{\lambda}).
\end{equation}
In particular, we have a deformation
$\qalgebra{A}^S = ((\Cinfty(M)\formal{\lambda},\star^S),
\algebra{B}_C\formal{\lambda}, \algebra{J}_C\formal{\lambda})$ of the
classical coisotropic triple in this case, and the coisotropic triple
$\qalgebra{A}$ is isomorphic to it.

%
%

\subsection{Example II: Coisotropic Reduction in the Symplectic Case}
\label{subsec:ExampleIBRST}

While the previous example makes use of a Lie group symmetry, the
following relies on a coisotropic submanifold only. However, at the
present state, we have to restrict ourselves to a symplectic manifold
$(M, \omega)$. Thus let $\iota\colon C \longrightarrow M$ be a
coisotropic submanifold. We assume that the classical reduced phase
space $M_\red = C \big/ \mathord{\sim}$ is smooth with the projection
map $\pi\colon C \longrightarrow M_\red$ being a surjective
submersion. It follows that there is a unique symplectic form
$\omega_\red$ on $M_\red$ with $\pi^*\omega_\red = \iota^*\omega$. We
follow closely the construction of Bordemann in \cite{bordemann:2005a,
  bordemann:2004a:pre} to construct a deformation of the classical
coisotropic triple
$\algebra{A} = (\algebra{A}_\Total, \algebra{A}_\Wobs,
\algebra{A}_\Null)$ given by $\algebra{A}_\Total = \Cinfty(M)$ with
the vanishing ideal
$\algebra{A}_\Null = \algebra{J}_C \subseteq \Cinfty(M)$ of $C$ and
the Poisson normalizer
$\algebra{A}_\Wobs = \algebra{B}_C \subseteq \Cinfty(M)$ of
$\algebra{J}_C$ as before.

To this end, one considers the product $M \times M_\red^-$ with the
symplectic structure $\pr^*_M \omega -
\pr_{M_\red}^*\omega_\red$. Then
\begin{equation}
    \label{eq:CintoMMred}
    I\colon
    C \ni p
    \; \mapsto \;
    (\iota(p), \pi(p)) \in M \times M_\red
\end{equation}
is an embedding of $C$ as a Lagrangian submanifold. By Weinstein's
Lagrangian neighbourhood theorem one has a tubular neighbourhood
$U \subseteq M \times M_\red$ and an open neighbourhood $V \subseteq T^*C$
of the zero section $\iota_C\colon C \longrightarrow T^*C$ in the
cotangent bundle $\pi_C\colon T^*C \longrightarrow C$ with a
symplectomorphism $\Psi\colon U \longrightarrow V$, where $T^*C$ is
equipped with its canonical symplectic structure, such that
$\Psi \circ I = \iota_C$.

In the symplectic case, star products $\star$ are classified by their
characteristic or Fedosov class $c(\star)$ in
$\HdR^2(M, \field{C})[[\lambda]]$. The assumption of having a smooth
reduced phase space allows us now to choose star products $\star$ on
$M$ and $\star_\red$ on $M_\red$ in such a way that
$\iota^*c(\star\at{U}) = \pi^* c(\star_\red)$. Note that this is a
non-trivial condition on the relation between $\star$ and $\star_\red$
which, nevertheless, always has solutions. Given such a matching pair
we have a star product $\star \tensor \star_\red^\opp$ on
$M \times M_\red^-$ by taking the tensor product of the individual
ones. Note that we need to take the opposite star product on the
second factor as we also took the negative of $\omega_\red$ needed to
have a Lagrangian embedding in \eqref{eq:CintoMMred}. It follows that
the characteristic class
$c\big((\star \tensor \star_\red^\opp)\at{U}\big) = 0$ is trivial.

On the cotangent bundle $T^*C$ the choice of a covariant derivative
induces a standard-ordered star product $\star_\std$ together with a
left module structure on $\Cinfty(C)\formal{\lambda}$ via the
corresponding symbol calculus, see
\cite{bordemann.neumaier.waldmann:1998a}. The characteristic class of
$\star_\std$ is known to be trivial, $c(\star_\std) = 0$, see
\cite{bordemann.neumaier.pflaum.waldmann:2003a}. Hence the pull-back
star product $\Psi^*(\star_\std\at{V})$ is equivalent to
$(\star \tensor \star_\red^\opp)\at{U}$. Hence we find an equivalence
transformation between $\Psi^*(\star_\std)$ and
$\star \tensor \star_\red$ on the tubular neighbourhood $U$. Using
this, we can also pull-back the left module structure to obtain a left
module structure on $\Cinfty(C)\formal{\lambda}$ for the algebra
$\Cinfty(M \times M_\red)\formal{\lambda}$. Note that here we even get
an extension to all functions since the left module structure with
respect to $\star_\std$ coming from the symbol calculus is by
differential operators and $\Psi \circ I = \iota_C$. Hence the module
structure with respect to $\star \tensor \star_\red^\opp$ is by
differential operators as well. This ultimately induces a left module
structure $\acts$ on $\Cinfty(C)\formal{\lambda}$ with respect to
$\star$ and a right module structure $\racts$ with respect to
$\star_\red$ such that the two module structures commute: we have a
bimodule structure. Moreover, it is easy to see that the module
endomorphisms of the left $\star$-module are given by the right
multiplications with functions from $\Cinfty(M_\red)\formal{\lambda}$,
i.e.
\begin{equation}
    \label{eq:EndosCinftyC}
    \End_{(\Cinfty(M)\formal{\lambda}, \star)}
    (\Cinfty(C)\formal{\lambda})^\opp
    \cong
    \Cinfty(M_\red)\formal{\lambda}.
\end{equation}
Moreover, one can construct from the above equivalences a formal
series $S = \id + \sum_{r=1}^\infty \lambda^r S_r$ of differential
operators $S_r$ on $M$ such that the left module structure is given by
\begin{equation}
    \label{eq:LeftModuleOnC}
    f \acts \psi
    =
    \iota^*(S(f) \star \prol(\psi)),
\end{equation}
for $f \in \Cinfty(M)\formal{\lambda}$ and
$\psi \in \Cinfty(C)\formal{\lambda}$, where
$\prol\colon \Cinfty(C)\formal{\lambda} \longrightarrow
\Cinfty(M)\formal{\lambda}$ is the prolongation coming from the
tubular neighbourhood $U$.

The left module structure is cyclic with cyclic vector
$1 \in \Cinfty(C)\formal{\lambda}$. This means that
\begin{equation}
    \label{eq:KerModule}
    \qalgebra{J}_C
    =
    \left\{
        f \in \Cinfty(M)\formal{\lambda}
        \; \big| \;
        f \acts 1 = 0
    \right\}
\end{equation}
is a left $\star$-ideal and
$\Cinfty(C)\formal{\lambda} \cong \Cinfty(M)\formal{\lambda} \big/
\qalgebra{J}_C$ as left $\star$-modules. Moreover, the normalizer
\begin{equation}
    \label{eq:qBCDef}
    \qalgebra{B}_C = \normalizer_\star(\qalgebra{J}_C)
\end{equation}
with respect to $\star$ gives first
$\qalgebra{B}_C \big/ \qalgebra{J}_C \cong
\End_{(\Cinfty(M)\formal{\lambda}, \star)}
(\Cinfty(C)\formal{\lambda})^\opp$ for general reasons. Then this
yields the algebra isomorphism
$\qalgebra{B}_C \big/ \qalgebra{J}_C \cong
\Cinfty(M_\red)\formal{\lambda}$.

Thanks to the explicit formula for $\acts$ we can use the series $S$
to pass to a new equivalent star product $\star'$ such that
$\qalgebra{J}'_C = \algebra{J}_C\formal{\lambda}$.  We see that this
brings us precisely in the situation of
Example~\ref{example:DeformationCTriple}: The coisotropic algebra
$\qalgebra{A} = (\qalgebra{A}_\Total, \qalgebra{A}_\Wobs,
\qalgebra{A}_\Null)$ with
$\qalgebra{A}_\Total = (\Cinfty(M)\formal{\lambda}, \star)$ and
$\qalgebra{A}_\Wobs = \qalgebra{B}_C$ as well as
$\qalgebra{A}_\Null = \qalgebra{J}_C$ is isomorphic to a deformation
of the classical coisotropic algebra $\algebra{A}$ we started
with. Note that it might not be directly a deformation of
$\algebra{A}$ as we still might have to untwist first $\qalgebra{J}_C$
using $S$ and then $\qalgebra{B}_C$ as in
Example~\ref{example:DeformationCTriple}. This way we can give a
re-interpretation of Bordemann's construction in the language of
deformations of coisotropic algebras.

%
%

\subsection{Outlook}

When working with coisotropic algebras and related structures it is a
recurring theme to investigate the compatibility of a given
construction with the reduction functor.  We have seen in
\autoref{thm:defvsred} and \autoref{prop:HochschildVSReduction} that
the compatibility with reduction might only be given up to an
injective natural transformation, and in general it seems that one can
not expect much more.  Nevertheless, it would be rewarding to find
special situations in which the deformation functor $\Def$ or the
construction of the Hochschild complex commute with reduction up to a
natural \emph{iso}morphism.

Given a bimodule over a coisotropic algebra it should be clear that
one can define the coisotropic Hochschild complex and its cohomology
also with coefficients in the bimodule. This can then be used to
formulate also the deformation problem for (bi-)modules.

Having established coisotropic Hochschild cohomology and its
importance in deformation theory of coisotropic algebras one would
like to be able to actually compute it in certain cases.  A first
important example known from classical differential geometry is the
Hochschild-Kostant-Rosenberg theorem, implementing a bijection between
the Hochschild cohomology of the algebra of functions on a manifold
and its multivector fields.  A coisotropic version of this result for
coisotropic algebras of the form
$(\Cinfty(M), \Cinfty(M)^\mathcal{F}, \mathcal{J}_C)$, with $M$ a
smooth manifold, $\mathcal{J}_C$ the vanishing ideal of a submanifold
and $\Cinfty(M)^\mathcal{F}$ the functions on $M$ which are constant
along a foliation $\mathcal{F}$ on $C$, would be desirable.  To
achieve this it will be necessary to carry over other notions of
differential geometry, like multivector fields etc., to the
coisotropic setting. It will be important to consider also
geometrically motivated bimodules for the coefficients in such
scenarios. The cohomologies computed in
\cite{bordemann.et.al:2005a:pre} should be related to the coisotropic
Hochschild cohomology, at least for particular and simple cases of
submanifolds and foliations.

%
%

%
%

\end{document}